\newcommand{\N}{\mathbb{N}}
\newcommand{\Z}{\mathbb{Z}}
\newcommand{\set}[1]{\left\{#1 \right\}}
\DeclareMathOperator{\F}{F}
\DeclareMathOperator{\G}{G}
\DeclareMathOperator{\m}{m}
\DeclareMathOperator{\g}{g}
\DeclareMathOperator{\Hook}{H}
\DeclareMathOperator{\PF}{PF}
\DeclareMathOperator{\Pa}{P}
\DeclareMathOperator{\q}{q}
\DeclareMathOperator{\type}{t}
\DeclareMathOperator{\A}{A}
\DeclareMathOperator{\M}{M}
\newcommand{\hook}{\text{hook}}
\newtheorem{theorem}{Theorem}[section]
\newtheorem{lemma}[theorem]{Lemma}
\newtheorem{corollary}[theorem]{Corollary}
\newtheorem{prop}[theorem]{Proposition}
\theoremstyle{definition}
\newtheorem{definition}[theorem]{Definition}
\newtheorem{example}[theorem]{Example}
\newtheorem{question}{Question}
\theoremstyle{remark}
\numberwithin{equation}{section}
\begin{document}

\title{On the Smallest Partition Associated to a Numerical Semigroup}

\author[N. Kaplan]{Nathan Kaplan}
\address{Department of Mathematics, University of California, Irvine, 340 Rowland Hall, Irvine, CA 92697-3875}
\email{nckaplan@math.uci.edu}

\thanks{The authors thank the referee for many helpful suggestions that significantly improved the paper.  We thank Ian Farish, Erik Imathiu-Jones, Matilda LaFortune, and Victoria Wiest for many helpful discussions. The first author thanks Maria Monks Gillespie for conversations many years ago that helped get this project started. The authors received support from NSF Grant DMS 2154223. The second and third author received support from the UCI UROP Summer Undergraduate Research Program.}

\author[K. Kim]{Kaylee Kim}
\address{Department of Mathematics, University of California, Irvine, 340 Rowland Hall, Irvine, CA 92697-3875}
\email{kayleesk@uci.edu}

\author[C. McGeorge]{Cole McGeorge}
\address{Department of Mathematics, University of California, Irvine, 340 Rowland Hall, Irvine, CA 92697-3875}
\email{cmcgeorg@uci.edu}

\author[F. Ramirez]{Fabian Ramirez}
\address{Department of Mathematics, University of California, Irvine, 340 Rowland Hall, Irvine, CA 92697-3875}
\email{fabiar3@uci.edu}

\author[D. Singhal]{Deepesh Singhal}
\address{Department of Mathematics, University of California, Irvine, 340 Rowland Hall, Irvine, CA 92697-3875}
\email{singhald@uci.edu}

\subjclass[2020]{20M14; 05A17}

\date{\today}

\begin{abstract}
The set of hook lengths of an integer partition $\lambda$ is the complement of some numerical semigroup $S$.  There has been recent interest in studying the number of partitions with a given set of hook lengths.  Very little is known about the distribution of sizes of this finite set of partitions.  We focus on the problem of determining the size of the smallest partition with its set of hook lengths equal to $\N\setminus S$.
\end{abstract}

\maketitle

\section{Introduction}

An \emph{integer partition} $\lambda$ is a list of positive integers $(\lambda_1,\lambda_2,\ldots, \lambda_\ell)$ satisfying $\lambda_1 \ge \lambda_2 \ge \cdots \ge \lambda_\ell$.  The $\lambda_i$ are called the \emph{parts} of $\lambda$.  We write $|\lambda| = \lambda_1 + \cdots + \lambda_\ell$ for the \emph{size} of $\lambda$.  If $|\lambda| = n$ we say that $\lambda$ is a partition of $n$.  We can represent a partition $\lambda$ using its \emph{Young diagram}, a left-justified array of boxes with $\lambda_i$ boxes in the $i$\textsuperscript{th} row.
Every box has an associated \emph{hook}, which is the set of boxes below and to the right of the box (including the original box).
The \emph{hook length} of a box is the size of its hook, that is, the number of boxes below it, plus the number of boxes to the right of it, plus $1$ for the box itself.
The \emph{hook set} of $\lambda$, denoted $\Hook(\lambda)$, is the set of hook lengths of the Young diagram of $\lambda$.  Ignoring the left edge and the top edge, the boundary of the Young diagram of $\lambda$ is the \emph{profile} of $\lambda$.  We give an example in Figure \ref{Fig:hook_length}.

Our next goal is to explain the connection between hook sets of partitions and numerical semigroups.  We first need to discuss numerical sets.  Let $\N = \{0,1,2,\ldots\}$ denote the set of nonnegative integers.  A \emph{numerical set} $T$ is a subset of $\N$ that contains $0$ and has finite complement in $\N$.  The elements of $\N\setminus T$ are called the \emph{gaps} of $T$, denoted $\G(T)$.  The largest of these gaps is the \emph{Frobenius number} of $T$, denoted $\F(T)$, and the number of these gaps is the \emph{genus} of $T$, denoted $\g(T)$.  The smallest nonzero element of $T$ is the \emph{multiplicity} of $T$, denoted $\m(T)$.  The Frobenius number of $T$ lies in the interval between two consecutive multiples of $\m(T)$.  That is, there is a unique nonnegative integer $\q(T)$, called the \emph{depth} of $T$, such that $(\q(T)-1)\m(T) \le \F(T) < \q(T)\m(T)$. 

There is a bijection between the set of integer partitions and the set of numerical sets given by the enumeration of a numerical set $T$.  We construct this map by describing a walk starting at the origin in $\Z^2$ that is determined by the elements of $T$.
Starting with $n = 0$:
\begin{itemize}
\item if $n \in T$, draw a line of unit length to the right,
\item if $n \not\in T$, draw a line of unit length up,
\item repeat for $n+1$.
\end{itemize}
For any integer $n > \F(T)$ we draw a line to the right.  We see that this process ends with an infinite series of steps to the right.  We disregard this section of the walk and see that we have constructed the profile of the Young diagram of a partition.  This is the partition $\lambda(T)$, called the \emph{enumeration of $T$}.  This idea of studying numerical sets and numerical semigroups by associating them with walks on a grid was introduced by Bras-Amor\'os and de Mier \cite{BAdM}.  For a more extensive discussion of this bijection see \cite[Section 2]{CONSTANTIN201799}.

Let $T$ be a numerical set. Note that each box of $\lambda(T)$ is uniquely determined by a pair $(u,x)$ satisfying $u\in T,\  x\in \G(T)$ and $u<x$. We denote this box by $\Box(u,x)$ and denote its hook by $\hook(u,x)$. 
The size of a hook is the number of boxes in it. Note that $\hook(u,x)$ has size $|\hook(u,x)|=x-u$.
We also see that
\begin{equation}\label{eqn: formlua size partition}
|\lambda(T)|=\#\{(u,x)\colon u\in T, x\in \G(T), u<x\}=
\sum_{x\in \G(T)}\#\{u\in T\colon u<x\}. 
\end{equation}

When we list the elements of a numerical set $T$ we always write them in increasing order.  Since the complement of $T$ is finite, we can write $T = \{0, a_1, ..., a_k, ~\rightarrow~\}$, where the symbol $\rightarrow$ indicates that $T$ contains all integers greater than $a_k$.  We see that the partition given in Figure \ref{Fig:hook_length} is the enumeration of the numerical set $T = \{0,5,7,9,\rightarrow\}$.
\begin{figure}[h]
    \centering
    \begin{minipage}{0.3\textwidth}
        \centering
        \begin{tikzpicture}[scale=0.7,y=0.9cm,x=1cm]
            \coordinate (A) at (0,0);
            \coordinate (B) at (1,0);
            \coordinate (C) at (1,1);
            \coordinate (D) at (1,2);
            \coordinate (E) at (1,3);
            \coordinate (F) at (1,4);
            \coordinate (G) at (2,4);
            \coordinate (H) at (2,5);
            \coordinate (I) at (3,5);
            \coordinate (J) at (3,6);
            \coordinate (K) at (4,6);
            \coordinate (L) at (5,6);
            \coordinate (M) at (6,6);
            \coordinate (N) at (7,6);

            \draw[->] (A)--(B);
            \draw[->] (B)--(C);
            \draw[->] (C)--(D);
            \draw[->] (D)--(E);
            \draw[->] (E)--(F);
            \draw[->] (F)--(G);
            \draw[->] (G)--(H);
            \draw[->] (H)--(I);
            \draw[->] (I)--(J);
            \draw[->] (J)--(K);
            \draw[->] (K)--(L);
            \draw[dotted, thick] (L)--(M);

            \node at (0.5,-0.3) {$0$};
            \node at (1.25,0.5) {$1$};
            \node at (1.25,1.5) {$2$};
            \node at (1.25,2.5) {$3$};
            \node at (1.25,3.5) {4};
            \node at (1.6,3.7) {5};
            \node at (2.25,4.5) {6};
            \node at (2.6,4.7) {7};
            \node at (3.25,5.5) {8};
            \node at (3.6,5.7) {9};
            \node at (4.6,5.7) {10};

        \end{tikzpicture}
    \end{minipage}
    \hfill
    \begin{minipage}{0.3\textwidth}
        \centering
            \begin{tikzpicture}[scale=0.7, ,y=0.9cm,x=1cm]
        \coordinate (A) at (0,0);
        \coordinate (B) at (1,0);
        \coordinate (C) at (1,1);
        \coordinate (D) at (1,2);
        \coordinate (E) at (1,3);
        \coordinate (F) at (1,4);
        \coordinate (G) at (2,4);
        \coordinate (H) at (2,5);
        \coordinate (I) at (3,5);
        \coordinate (J) at (3,6);
        \coordinate (N) at (0,6);
        \coordinate (c) at (0,1);
        \coordinate (d) at (0,2);
        \coordinate (e) at (0,3);
        \coordinate (f) at (0,4);
        \coordinate (h) at (0,5);
        \coordinate (b) at (1,6);
        \coordinate (g) at (2,6);

        \draw (A)--(B)--(C)--(D)--(E)--(F)--(G)--(H)--(I)--(J)--(N)--(A);
        \draw (c)--(C);
        \draw (d)--(D);
        \draw (e)--(E);
        \draw (f)--(F);
        \draw (h)--(H);
        \draw (b)--(B);
        \draw (g)--(G);

        \node at (0.5,-0.3) {0};
        \node at (1.25,0.5) {1};
        \node at (1.25,1.5) {2};
        \node at (1.25,2.5) {3};
        \node at (1.25,3.5) {4};
        \node at (1.6,3.7) {5};
        \node at (2.25,4.5) {6};
        \node at (2.6,4.7) {7};
        \node at (3.25,5.5) {8};

    \end{tikzpicture}
    \hspace{.2 in}
    \end{minipage}
    \begin{minipage}{.3\textwidth}
            \begin{tikzpicture}[scale=0.7, ,y=0.9cm,x=1cm]
        \coordinate (A) at (0,0);
        \coordinate (B) at (1,0);
        \coordinate (C) at (1,1);
        \coordinate (D) at (1,2);
        \coordinate (E) at (1,3);
        \coordinate (F) at (1,4);
        \coordinate (G) at (2,4);
        \coordinate (H) at (2,5);
        \coordinate (I) at (3,5);
        \coordinate (J) at (3,6);
        \coordinate (N) at (0,6);
        \coordinate (c) at (0,1);
        \coordinate (d) at (0,2);
        \coordinate (e) at (0,3);
        \coordinate (f) at (0,4);
        \coordinate (h) at (0,5);
        \coordinate (b) at (1,6);
        \coordinate (g) at (2,6);

        \draw (A)--(B)--(C)--(D)--(E)--(F)--(G)--(H)--(I)--(J)--(N)--(A);
        \draw (c)--(C);
        \draw (d)--(D);
        \draw (e)--(E);
        \draw (f)--(F);
        \draw (h)--(H);
        \draw (b)--(B);
        \draw (g)--(G);

        \node at (0.5,-0.4) {};

        \node at (0.5,0.5) {$1$};
        \node at (0.5,1.5) {$2$};
        \node at (0.5,2.5) {$3$};
        \node at (0.5,3.5) {$4$};
        \node at (0.5,4.5) {$6$};
        \node at (0.5,5.5) {$8$};
        \node at (1.5,4.5) {$1$};
        \node at (1.5,5.5) {$3$};
        \node at (2.5,5.5) {$1$};

    \end{tikzpicture}
    \end{minipage}
    \caption{From left to right we have, the walk defined by $T=\{0,5,7,9,\rightarrow\}$, the Young diagram of $\lambda(T)$, and $\lambda(T)$ where each box is labeled with its hook length.}
    \label{Fig:hook_length}
\end{figure}

A numerical set $S$ that is closed under addition is a \emph{numerical semigroup}.  It is not difficult to show that the hook set of an integer partition $\lambda$ is the complement of some numerical semigroup $S$.  See for example \cite{KeithNath} or \cite[Proposition 4]{CONSTANTIN201799}.  Several authors have recently studied families of integer partitions that arise from special classes of numerical semigroups.  See for example \cite{BNST, ArfPartition}.  Let $\mathcal{P}(S) = \{\lambda\colon \Hook(\lambda) = \N \setminus S\}$ and $\Pa(S) = |\mathcal{P}(S)|$. There has been recent interest in studying $\Pa(S)$; see \cite{chen2023enumeratingnumericalsetsassociated,ChenAsymptoticGrowth,CONSTANTIN201799,doi:10.1080/00927872.2021.1918136}. However, very little is known about the distribution of the sizes of the partitions in $\mathcal{P}(S)$.  We focus on a particular question about these sizes.
\begin{question}\label{Q:1}
Let $S$ be a numerical semigroup.  What is the size of the smallest partition $\lambda$ with $\Hook(\lambda) = \N \setminus S$?
\end{question}

We first observe that since $\Hook(\lambda(S)) = \N \setminus S$, the set $\mathcal{P}(S)$ is nonempty.  See \cite[Proposition 4]{CONSTANTIN201799} or \cite[Proposition 1]{MarzuolaMiller} for a proof.  If $S$ is a numerical semigroup for which $\lambda(S)$ has minimal size among all partitions in $\mathcal{P}(S)$, then we say that $S$ is \emph{enumeration-minimal} or \emph{$\lambda$-minimal}.  Analyzing small examples suggests that it is very common for a semigroup $S$ to have this property.
We implemented \cite[Algorithm 5.1]{chen2023enumeratingnumericalsetsassociated} using the \texttt{numericalsgps} package \cite{NumericalSgps} in the computer algebra system GAP \cite{GAP} to verify the following result.  Computations for this paper were also done in the computer algebra system Sage \cite{sagemath10.6beta8}. Our code is available at~\cite{DDeepuS2025LambdaMinimality}.
\begin{prop}
\label{prop:small minimal}
Let $S$ be a numerical semigroup.  If $\F(S) \le 16$ or $\g(S) \le 11$, then $S$ is $\lambda$-minimal.
\end{prop}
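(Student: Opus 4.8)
The plan is to prove this by an exhaustive, provably complete computation over the two finite families of semigroups, so the first step is to pin down their finiteness and generate them. Every numerical semigroup with $\F(S) \le 16$ contains all integers exceeding $16$, hence is determined by its gap set $\G(S) \subseteq \{1, \dots, 16\}$, leaving at most $2^{16}$ candidates. For the genus condition I would use the elementary bound $\F(S) \le 2\g(S) - 1$ (if both $i$ and $\F(S) - i$ lay in $S$, then so would $\F(S)$), which shows that $\g(S) \le 11$ forces $\F(S) \le 21$ and again reduces matters to a finite list. Both families can be generated directly with the \texttt{numericalsgps} package.

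Second, for each $S$ in these lists I would enumerate $\mathcal{P}(S)$ and compare sizes. Under the bijection $T \mapsto \lambda(T)$, a partition lies in $\mathcal{P}(S)$ exactly when its numerical set $T$ satisfies $\Hook(\lambda(T)) = \N \setminus S$; since the semigroup $S$ with $\Hook(\lambda(T)) = \N\setminus S$ is the associated semigroup $\{n \in \N : n + T \subseteq T\}$, which is contained in $T$ (see \cite{CONSTANTIN201799}), every such $T$ has the form $T = S \cup X$ with $X \subseteq \G(S)$. Hence there are at most $2^{\g(S)}$ of them, and Algorithm 5.1 of \cite{chen2023enumeratingnumericalsetsassociated} lists precisely those with associated semigroup $S$. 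For each I compute $|\lambda(T)|$ from the size formula \eqref{eqn: formlua size partition}, and the proposition reduces to the finite check that, for every $S$ in both families, $\min\{|\lambda| : \lambda \in \mathcal{P}(S)\} = |\lambda(S)|$; that is, the minimum is attained by $T = S$ itself.

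The main obstacle is not any single calculation but the absence of a structural shortcut: there is no a priori reason forcing $\lambda(S)$ to be minimal, so the verification genuinely requires enumerating \emph{all} of $\mathcal{P}(S)$ for each $S$, and the validity of the argument rests on the completeness of that enumeration. This is exactly what Algorithm 5.1 guarantees, and it also keeps the search far below the crude $2^{\g(S)}$ bound by building the numerical sets with a fixed associated semigroup without redundancy. Combined with the modest number of semigroups in each family, the whole verification runs quickly; the only external inputs are the correspondence between $\mathcal{P}(S)$ and numerical sets with associated semigroup $S$ from \cite{chen2023enumeratingnumericalsetsassociated, CONSTANTIN201799} together with the two finiteness bounds above.
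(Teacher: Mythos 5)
Your proposal is correct and matches the paper's approach: the paper likewise proves this proposition by exhaustive computation, implementing Algorithm 5.1 of \cite{chen2023enumeratingnumericalsetsassociated} in the \texttt{numericalsgps} package to enumerate all numerical sets associated to each of the finitely many semigroups with $\F(S)\le 16$ (784 of them) or $\g(S)\le 11$ (820 of them) and checking that $|\lambda(S)|$ attains the minimum. Your added justification of finiteness (via $\F(S)\le 2\g(S)-1$) and of the reduction $T = S\cup X$ with $X\subseteq\G(S)$ is sound and simply makes explicit what the paper leaves implicit.
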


One can use the \texttt{numericalsgps} package to check that there are $784$ numerical semigroups $S$ with $\F(S) \le 16$ and there are $820$ numerical semigroups with $\g(S)~\le~11$. 
 However, not every numerical semigroup is $\lambda$-minimal.
\begin{example}\label{ex:non_lam}
Let $S = \{0,9,10,11,12,13,18,\rightarrow\}$.  Then $|\lambda(S)| = 32$.  The partition $\lambda = (9,8,2,2,2,2,2,2,2)$ also has $\Hook(\lambda) = \N \setminus S$, and $|\lambda| = 31$.
\end{example}
\noindent This paper came out of an attempt to understand this example.  Further computation shows that the semigroup $S$ from Example \ref{ex:non_lam} is the only one of the $592$ numerical semigroups of genus $12$ that is not $\lambda$-minimal.  

We now describe several of our main results.  We first introduce some additional notation.  A set of positive integers $n_1,\ldots, n_t$ is a \emph{generating set} for a numerical semigroup $S$ if $S$ is equal to the set of linear combinations of $n_1,\ldots, n_t$ with nonnegative integer coefficients.  That is, 
\[
S = \langle n_1,\ldots, n_t \rangle= \{x\in \mathbb{Z} \colon x = a_{1}n_{1}+ ... + a_{t}n_{t} \text{ with } a_{i} \in \N\}.
\]
It is not difficult to show that every numerical semigroup $S$ has a unique generating set of minimal size.  We direct the reader to \cite{rosales_numerical_2009} for a general reference on numerical semigroups.

We show that the semigroup $S$ from Example \ref{ex:non_lam} is a specialization of two different infinite families of numerical semigroups that are not $\lambda$-minimal.
\begin{theorem}\label{thm:inf_non_lam}
Let $m \ge 9$ and $S_{m} = \langle m, m+1,\ldots, 2m-5\rangle$.  Then $S_m$ is not $\lambda$-minimal.
\end{theorem}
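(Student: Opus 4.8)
The plan is to pin down $S_m$ completely, compute $|\lambda(S_m)|$ from the size formula \eqref{eqn: formlua size partition}, and then exhibit one explicit partition of strictly smaller size with the correct hook set. First I would determine $S_m$. Writing $[a,b]$ for the integers from $a$ to $b$, the generating set is the block of consecutive integers $m,m+1,\ldots,2m-5$, so sums of two generators sweep out every integer in $[2m,4m-10]$, while none of $2m-4,2m-3,2m-2,2m-1$ is a generator or a sum of two generators. For $m\ge 9$ we have $3m\le(4m-10)+1$, so the block $[2m,4m-10]$ and its translate $[3m,5m-10]$ by $m$ overlap; iterating the translation by $m$ shows every integer $\ge 2m$ lies in $S_m$. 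Hence
\[
S_m=\{0\}\cup[m,2m-5]\cup[2m,\infty),\qquad \G(S_m)=[1,m-1]\cup[2m-4,2m-1],
\]
so $\F(S_m)=2m-1$ and $\g(S_m)=m+3$. (For $m=9$ this recovers the semigroup of Example \ref{ex:non_lam}.)

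Next I would compute the size of the enumeration using \eqref{eqn: formlua size partition}. A gap $x\in[1,m-1]$ has only $0$ below it in $S_m$, contributing $1$; a gap $x\in[2m-4,2m-1]$ has $\{0\}\cup[m,2m-5]$ below it, i.e.\ $m-3$ elements, contributing $m-3$. Summing over the $m-1$ small gaps and the $4$ large gaps gives
\[
|\lambda(S_m)|=(m-1)+4(m-3)=5m-13.
\]

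For the competitor, generalizing the partition $(9,8,2,2,2,2,2,2,2)$ from Example \ref{ex:non_lam}, I would take $\mu=(m,\,m-1,\,\underbrace{2,\ldots,2}_{m-2})$. The number of parts equal to $2$ is forced: since the largest hook of any $\lambda\in\mathcal{P}(S_m)$ equals $\max(\N\setminus S_m)=\F(S_m)=2m-1$, and the largest hook of $\mu$ is the top-left corner hook $\lambda_1+\ell(\mu)-1$, requiring $m+\ell(\mu)-1=2m-1$ gives $\ell(\mu)=m$, hence $m-2$ parts equal to $2$. I would then verify $\Hook(\mu)=\N\setminus S_m$ by a direct box-by-box hook computation, organized by the three row types of $\mu$.

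The main obstacle is exactly this hook-set verification. The key is to track the conjugate shape: columns $1$ and $2$ have height $m$, while columns $3$ through $m-1$ have height $2$ and column $m$ has height $1$. With this in hand, the $m-2$ rows of length $2$ produce the hooks $[1,m-1]$ in the first two columns, the length-$(m-1)$ second row contributes $[1,m-3]$ together with the large hooks $\{2m-4,2m-3\}$, and the length-$m$ first row contributes $\{1,3,4,\ldots,m-1\}$ together with $\{2m-2,2m-1\}$. The crucial point to confirm is that \emph{no} hook length lands in the forbidden window $[m,2m-5]$ and that both blocks are covered exactly, not merely contained; the bookkeeping shows the short rows contribute hooks at most $m-1$ while the two long rows jump directly from hooks $\le m-1$ to hooks $\ge 2m-4$, so $\Hook(\mu)=[1,m-1]\cup[2m-4,2m-1]=\N\setminus S_m$. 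Finally, since $|\mu|=m+(m-1)+2(m-2)=4m-5$ and $4m-5<5m-13$ precisely when $m\ge 9$, the partition $\mu$ is strictly smaller than $\lambda(S_m)$, so $S_m$ is not $\lambda$-minimal.
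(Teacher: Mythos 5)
Your proposal is correct, and it takes a genuinely different route from the paper. The paper proves non-minimality of $S_m$ as a corollary of a stronger result (Theorem \ref{thm:inf_non_lam_sizes}): it computes the void $\M(S_m)=\{1,2,3,2m-4,2m-3,2m-2\}$, uses the order-ideal classification of Theorem \ref{thm:classify_num_set} to enumerate all six numerical sets associated to $S_m$, and reads off the complete list of sizes $\{4m-5,\,5m-13,\,5m-7\}$ from their Young diagrams. You instead bypass the void-poset machinery entirely: you compute $\G(S_m)$ and $|\lambda(S_m)|=5m-13$ directly from \eqref{eqn: formlua size partition}, exhibit the single partition $\mu=(m,m-1,2^{m-2})$ generalizing Example \ref{ex:non_lam}, verify $\Hook(\mu)=[1,m-1]\cup[2m-4,2m-1]=\N\setminus S_m$ by an explicit row-by-row hook computation (your bookkeeping is right: the short rows give exactly $[1,m-1]$, row two gives $[1,m-3]\cup\{2m-4,2m-3\}$, row one gives $\{1\}\cup[3,m-1]\cup\{2m-2,2m-1\}$, and nothing lands in $[m,2m-5]$), and check $|\mu|=4m-5<5m-13$ exactly when $m\ge 9$. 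Your argument is shorter, elementary, and self-contained, needing nothing beyond the size formula; incidentally, your $\mu$ is the enumeration of the numerical set $S_m\cup\{2,2m-4,2m-3\}$ appearing in the paper's proof, so you have found (the conjugate of) the paper's minimizer. What the paper's heavier approach buys is completeness: it determines $\Pa(S_m)=6$, the full distribution of sizes, that $4m-5$ is actually the minimum over $\mathcal{P}(S_m)$, and hence the asymptotic statements ($|\lambda(S)|-\min \to \infty$ and the ratio tending to $4/5$), none of which follow from exhibiting one smaller partition.
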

We prove this result in Section \ref{sec:non-lam} by determining the sizes of all of the partitions in $\mathcal{P}(S_m)$.

\begin{theorem}\label{thm:2k1_3k1_intro}
Let $k \ge 4$ be a positive integer that is not prime.  Then $S_k = \langle 2k+1, 2k+2, \ldots, 3k+1\rangle$ is not $\lambda$-minimal.
\end{theorem}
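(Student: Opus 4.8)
The plan is to produce a numerical set $T\neq S_k$ whose enumeration $\lambda(T)$ lies in $\mathcal P(S_k)$ but is strictly smaller than $\lambda(S_k)$. First I would pin down $S_k$. Because its $k+1$ generators are the consecutive integers $2k+1,\dots,3k+1$, the sums of any $j$ of them (with repetition) fill the whole interval $[\,j(2k+1),\,j(3k+1)\,]$, and consecutive intervals meet once $j\ge2$; hence
\[
S_k=\{0\}\cup[2k+1,3k+1]\cup[4k+2,\rightarrow),\qquad \G(S_k)=[1,2k]\cup[3k+2,4k+1].
\]
Equation \eqref{eqn: formlua size partition} then gives $|\lambda(S_k)|=2k\cdot 1+k\cdot(k+2)=k^2+4k$, since each of the $2k$ small gaps sees only $0$ below it while each of the $k$ large gaps sees $0$ together with all of $[2k+1,3k+1]$. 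Using the standard fact that $\Hook(\lambda(T))=\N\setminus\set{n\in\N : n+T\subseteq T}$ for a numerical set $T$ (see \cite{CONSTANTIN201799,MarzuolaMiller}), it suffices to find $T$ with associated semigroup exactly $S_k$ and with $|\lambda(T)|<k^2+4k$.

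The family I would use is, for $1\le s\le \lfloor k/2\rfloor$,
\[
T_s=\{0,s\}\cup[2k+1,\,3k+1+s]\cup[4k+2,\rightarrow).
\]
One checks directly that $S_k+T_s\subseteq T_s$, so $T_s$ is a module over $S_k$ and $S_k$ is contained in its associated semigroup. Its gaps are $([1,2k]\setminus\{s\})\cup[3k+2+s,4k+1]$, and \eqref{eqn: formlua size partition} gives
\[
|\lambda(T_s)|=(4k-s-1)+(k-s)(k+s+3)=k^2+4k-\bigl(s^2+4s+1-3k\bigr),
\]
where the small gaps below $s$ contribute $1$ each, those above $s$ contribute $2$ each, and each of the $k-s$ surviving large gaps contributes $2+(k+1+s)$. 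Thus $|\lambda(T_s)|<|\lambda(S_k)|$ precisely when $s^2+4s+1>3k$. (For $k=4,\ s=2$ this returns a partition of size $31$, the conjugate of the one in Example \ref{ex:non_lam}.)

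The delicate step, which I expect to be the main obstacle, is verifying that the associated semigroup of $T_s$ is no larger than $S_k$; this is exactly where $s\le\lfloor k/2\rfloor$ is needed. Since $0\in T_s$, the associated semigroup is contained in $T_s$, so I only need to exclude the elements of $T_s\setminus S_k=\{s\}\cup[3k+2,3k+1+s]$. The element $s$ is excluded because $2s\le k$ makes $s+s$ a small gap, so $s+s\notin T_s$; and each added large gap $w\in[3k+2,3k+1+s]$ is excluded because $3k+2+s\le w+s\le 3k+1+2s\le 4k+1$, so $w+s$ lands in the gap range $[3k+2+s,4k+1]$ and $w+s\notin T_s$. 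Hence the associated semigroup is $S_k$ and $\lambda(T_s)\in\mathcal P(S_k)$. To finish I would take $s=\lfloor k/2\rfloor$ and check $s^2+4s+1>3k$: for even $k=2m$ the difference is $(m-1)^2\ge1$, and for odd composite $k=2m+1$ (so $k\ge9$, $m\ge4$) it is $m^2-2m-2\ge6$. Both requirements $s\le\lfloor k/2\rfloor$ and $s^2+4s+1>3k$ can be met simultaneously for every $k\ge4$ except $k=5$, which the nonprimality hypothesis rules out; so $S_k$ is not $\lambda$-minimal.
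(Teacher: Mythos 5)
Your proposal is correct, and it takes a genuinely different route from the paper's. The paper (Theorem \ref{thm:2k1_3k1}) fixes a divisor $l$ of $k$ and adjoins to $S_k$ the order ideal $\{1,\dots,l-1\}\cup\{3k+1<x\le 4k+1\colon x\not\equiv 1\pmod{l}\}$, certifying $\A(T)=S_k$ via the void-poset machinery (Theorem \ref{thm:classify_num_set}, checking Frobenius triangles); the nonprimality hypothesis is then structural, since the construction needs a divisor $l\notin\{1,k\}$. You instead adjoin the principal order ideal generated by a single small gap $s$, namely $\{s\}\cup[3k+2,3k+1+s]$, and verify $\A(T_s)=S_k$ directly from the definition of the atom monoid: the inclusion $S_k\subseteq \A(T_s)$ is a routine interval check, and the exclusions only require that $2s$ and $w+s$ land among the gaps, which is exactly what $s\le\lfloor k/2\rfloor$ guarantees. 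Your size computations ($|\lambda(S_k)|=k^2+4k$ and $|\lambda(T_s)|=k^2+4k-(s^2+4s+1-3k)$) and the case analysis at $s=\lfloor k/2\rfloor$ are correct, and your $k=4$, $s=2$ example does recover the conjugate of the partition in Example \ref{ex:non_lam}. Comparing what each approach buys: yours is more elementary (no poset or triangle machinery) and proves strictly more than the stated theorem --- it shows $S_k$ is not $\lambda$-minimal for every $k\ge 4$ except $k=5$, primes $k\ge 7$ included, with nonprimality used only to discard $k=5$; it also produces, for large $k$, roughly $k/2-\sqrt{3k}$ distinct smaller partitions with no divisibility hypothesis. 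What it does not give is the paper's asymptotic refinement: your family bottoms out near $\frac{3}{4}|\lambda(S_k)|$ at $s=\lfloor k/2\rfloor$, whereas the divisor construction with $l$ chosen well yields partitions of size $O(|\lambda(S_k)|^{3/4})$, which is what drives the paper's corollary with $k=12l_1^2$, $l=3l_1$.
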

In fact, we show that if $k$ has many proper divisors, then there are many partitions in $\mathcal{P}(S_k)$ with size less than $|\lambda(S_k)|$.

In Sections \ref{sec:staircase}, \ref{sec:depth2}, and \ref{sec:small_type} we prove that several families of numerical semigroups $S$ are $\lambda$-minimal.  
\begin{theorem}\label{thm:staircase}
Let $m$ and $k$ be positive integers with $m \ge 2$ and let $s$ be an integer satisfying $1 \le s \le m-1$. The numerical semigroup
\[
S_{m,k,s} = \{0,m,2m, \ldots, km, km+s+1, \rightarrow \}
\]
is $\lambda$-minimal.
\end{theorem}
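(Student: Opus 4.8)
The plan is to translate $\lambda$-minimality into a statement about the numerical sets enumerating the partitions of $\mathcal P(S)$, reduce the size comparison to a single inequality on sums of gaps, and then exploit the explicit residue-class description of $S=S_{m,k,s}$. By the bijection $T\mapsto\lambda(T)$, every $\lambda\in\mathcal P(S)$ equals $\lambda(T)$ for a unique numerical set $T$. From the box description preceding \eqref{eqn: formlua size partition}, $h\in\Hook(\lambda(T))$ exactly when $u+h\notin T$ for some $u\in T$; hence $h\notin\Hook(\lambda(T))$ iff $h+T\subseteq T$, so the numerical semigroup with complement $\Hook(\lambda(T))$ is $\mathrm{At}(T):=\{h\ge 0:h+T\subseteq T\}$. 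Thus $\lambda(T)\in\mathcal P(S)$ iff $\mathrm{At}(T)=S$, and any such $T$ satisfies $S\subseteq T$ and $S+T\subseteq T$. Ordering the gaps of $T$ as $g_1<\dots<g_{\g(T)}$ and using $\#\{u\in T:u<g_j\}=g_j-(j-1)$, equation \eqref{eqn: formlua size partition} collapses to
\[
|\lambda(T)|=\Big(\sum_{g\in\G(T)}g\Big)-\binom{\g(T)}{2}.
\]

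\emph{Reduction to one inequality.} Write $T=S\sqcup A$ with $A\subseteq\G(S)$, put $N:=\g(S)$ and $a:=|A|$, so $\G(T)=\G(S)\setminus A$ and $\g(T)=N-a$. Subtracting the two instances of the displayed formula and telescoping $\binom{N}{2}-\binom{N-a}{2}=\sum_{l=1}^{a}(N-l)$ gives
\[
|\lambda(T)|-|\lambda(S)|=\sum_{l=1}^{a}(N-l)-\sum_{x\in A}x .
\]
Hence $S$ is $\lambda$-minimal as soon as, for every admissible $A$ (meaning $\mathrm{At}(S\sqcup A)=S$), one has the inequality
\[
\sum_{x\in A}x\ \le\ aN-\binom{a+1}{2};
\]
call this $(\ast)$. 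Expanding both sides as double counts, $(\ast)$ is equivalent to $\#\{(x,y):x\in A,\ y\in\G(T),\ x<y\}\ge\#\{(u,x):u\in S,\ x\in A,\ u<x\}$, which is the form I would actually verify.

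\emph{Structure of admissible $A$.} For $S=S_{m,k,s}$ the gaps fall into the residue classes $r\in\{1,\dots,m-1\}$ modulo $m$: the class-$r$ gaps are $r,m+r,\dots,(\mathrm{top}_r)m+r$ with $\mathrm{top}_r=k$ when $r\le s$ and $\mathrm{top}_r=k-1$ when $r>s$, and $\F(S)=km+s$ is the top gap of class $s$. Since $m=\m(S)\in S$ while every other nonzero element of $S$ is a multiple $jm$ with $j\le k$ or exceeds $\F(S)$, the module condition $S+T\subseteq T$ amounts to closure under adding $m$; equivalently each $A_r:=A\cap(r+m\Z)$ is a top segment of its class. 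The condition $\mathrm{At}(T)=S$ then says no gap of $A$ is an atom: each $x\in A$ needs a \emph{witness} $x'\in A$ with $x+x'\in\G(S)\setminus A$. In particular a nonempty top segment cannot contain $\F(S)$, so $A_s=\varnothing$, and the smallest element $\mu:=\min A$ must be small enough to push each high gap of $A$ onto a gap outside $A$.

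\emph{Proving $(\ast)$ is the main obstacle.} For $x=im+r$ one has $\#\{u\in S:u<x\}=i+1$, so the right-hand count in $(\ast)$ is $a+\sum_{x\in A}\mathrm{lev}(x)$ with $\mathrm{lev}(im+r)=i$, while the left-hand count records how many $T$-gaps lie above each element of $A$. The top-segment structure means a high-level gap can enter $A$ only together with all gaps above it in its class, whereas the atom condition forces $A$ to also contain low-level gaps (each witness uses a step $x'\ge\mu$). I would prove $(\ast)$ by pairing witnesses against levels one residue class at a time, showing that the negative contributions of the admissible high gaps (those of class $r\le s$, which reach level $k$) are always absorbed by the positive contributions of the low-level gaps that the atom condition compels $A$ to carry, and then controlling the interaction between classes through the common shift $\mu$. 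Making this compensation rigorous for \emph{every} admissible $A$ is the crux; the equality case $A=\{2,5\}$ for $S_{3,2,1}$ shows $(\ast)$ is tight and that the hypothesis $\mathrm{At}(T)=S$ is indispensable, since dropping it (for instance taking $A=\{\F(S)\}$, which forces $\F(S)$ to become an atom) makes $(\ast)$ fail.
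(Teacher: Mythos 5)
Your reduction is correct and clean: the formula $|\lambda(T)|=\sum_{g\in\G(T)}g-\binom{\g(T)}{2}$, the resulting identity $|\lambda(T)|-|\lambda(S)|=\sum_{l=1}^{a}(N-l)-\sum_{x\in A}x$, and the structural facts about admissible $A$ (each $A_r$ is a top segment of its residue class, every $x\in A$ needs a witness $x'\in A$ with $x+x'\in\G(S)\setminus A$, and $A_s=\varnothing$) are all right, and your equality example $A=\{2,5\}$ for $S_{3,2,1}$ does check out. But the proposal has a genuine gap: the inequality $(\ast)$ is never proved. Everything you establish before it is either generic bookkeeping valid for any numerical semigroup or an easy description of the candidate sets $A$; the entire content of the theorem is precisely the claim that $(\ast)$ holds for every admissible $A$, and for that you offer only a plan ("pairing witnesses against levels one residue class at a time\dots Making this compensation rigorous for every admissible $A$ is the crux"). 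As written, the theorem is reformulated, not proved.

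Moreover, the proposed class-by-class pairing faces a real obstruction: the witness condition couples distinct residue classes, since a witness $x'$ for $x\in A_r$ lies in some class $r'$ and the sum $x+x'$ lands in class $r+r'\bmod m$, so the "negative contribution" of a high gap in one class must be absorbed by gaps and witnesses spread across other classes. The paper sidesteps exactly this difficulty by not summing gap values at all: it partitions the Young diagram of $\lambda(T)$ into sections $\lambda(T)_{a,b}$ (cut along the multiples of $m$ in $T$ and the elements $\F(S)-am\notin T$) and shows that each section must contain boxes realizing $m-1$ (respectively $s$, on the boundary $a+b=k$) distinct hook-length residues modulo $m$, hence at least as many boxes as the corresponding section of $\lambda(S)$. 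Producing those boxes is where the real work happens, and it uses the characterization of associated numerical sets via Frobenius triangles (Theorem \ref{thm:classify_num_set}) together with the chain structure of the void poset (Lemma \ref{lem:staircase}); the case analysis there (either $F-P\in T$, or $I$ satisfies a Frobenius triangle $(P,y,z)$) is the rigorous form of the "compensation" you gesture at. To complete your route you would need an argument of comparable substance; counting forced hook lengths section by section, rather than summing gap values, is what makes the cross-class interaction tractable.
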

This theorem is the focus of Section \ref{sec:staircase}. 
These numerical semigroups have previously appeared in the literature.  See, for example, \cite[Corollary 4.5]{casabella2024apery}, where the authors compute the size of their ideal class monoids. In \cite{ChenAsymptoticGrowth}, the authors show that for a fixed $m$ and $s$, the cardinality of $\mathcal{P}(S_{m,k,s})$ is given by a polynomial in $k$.

 In Section \ref{sec:depth2} we prove that several types of numerical semigroups with depth~$2$ are $\lambda$-minimal. A partition $\lambda$ has a \emph{Durfee square} of size $k$ if $k$ is the largest positive integer $i$ such that $\lambda$ has at least $i$ parts of size greater than or equal to~$i$.  Equivalently, $\lambda$ has a Durfee square of size $k$ if $k$ is the largest side length of a square contained in the Young diagram of $\lambda$.  For example, the partition in Figure \ref{Fig:hook_length} has a Durfee square of size $2$.
\begin{theorem}\label{thm:Durfee}
Let $S$ be a numerical semigroup of depth $2$.  If $\lambda(S)$ has a Durfee square of size at most $3$, then $S$ is $\lambda$-minimal.
\end{theorem}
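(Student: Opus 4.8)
The plan is to translate the question about partitions into one about numerical sets and then into an explicit subset inequality. Recall from the discussion around \eqref{eqn: formlua size partition} that every partition $\mu$ equals $\lambda(T)$ for a unique numerical set $T$, and that $\Hook(\lambda(T)) = \{x - u : u \in T,\ x \in \G(T),\ u < x\}$. A direct check shows that for $n \ge 1$ one has $n \in \Hook(\lambda(T))$ if and only if $u + n \notin T$ for some $u \in T$; hence $\Hook(\lambda(T)) = \N \setminus \A(T)$, where $\A(T) = \{n \in \N : n + T \subseteq T\}$ is the numerical semigroup of $T$. Consequently $\mu = \lambda(T) \in \mathcal{P}(S)$ exactly when $\A(T) = S$. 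Since $0 \in T$ forces $S = \A(T) \subseteq T$, every such $T$ has the form $T = S \sqcup A$ with $A \subseteq \G(S)$, and $S$ is $\lambda$-minimal precisely when $A = \emptyset$ minimizes $|\lambda(S \sqcup A)|$ over all \emph{admissible} $A$ (those with $\A(S \sqcup A) = S$).

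First I would record an exact size formula. Expanding $|\lambda(S \sqcup A)|$ via \eqref{eqn: formlua size partition} and separating the contributions of $S$ and of $A$ gives, writing $g = \g(S)$,
\[
|\lambda(S \sqcup A)| - |\lambda(S)| = \sum_{a \in A}(g - 1 - a) - \binom{|A|}{2} = \sum_{i=1}^{|A|}(g - i - a_i),
\]
where $a_1 < \cdots < a_{|A|}$ are the elements of $A$ in increasing order. (One checks this against Example \ref{ex:non_lam}, where $A = \{1,14,16\}$ gives value $-1$.) Thus the theorem reduces to proving $\Phi(A) := \sum_{a \in A}(g - 1 - a) - \binom{|A|}{2} \ge 0$ for every admissible $A$. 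Here each gap $a$ carries weight $g - 1 - a$, which is negative exactly when $a \ge g$; I call such gaps \emph{heavy}.

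Next I would unpack the depth-$2$ and Durfee hypotheses in these terms. For a depth-$2$ semigroup the gaps are $\{1, \ldots, \m(S) - 1\}$ together with the ``large'' gaps lying in $(\m(S), 2\m(S))$, and every row of $\lambda(S)$ arising from a small gap has length $1$. A computation with the remaining row lengths shows that the size of the Durfee square of $\lambda(S)$ equals the number of gaps that are $\ge g$, i.e. the number of heavy gaps; so the hypothesis says $S$ has at most three heavy gaps. Two structural facts then drive the argument: (i) $\F(S)$ is always heavy and can never lie in $A$, because admissibility forces each $a \in A$ to have a ``witness'' $t \in T$ with $a + t$ a gap outside $A$, which is impossible for $a = \F(S)$; and (ii) the witness $t$ of a heavy gap $a \in A$ must be a \emph{small} gap lying in $A$ (from $a \ge \m(S)$ and $a + t \le \F(S) < 2\m(S)$ one gets $0 < t < \m(S)$), while $a + t$ is again heavy. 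Hence $A$ contains at most $(\text{number of heavy gaps}) - 1 \le 2$ heavy gaps, and each heavy gap it contains forces a distinct small gap of large weight into $A$.

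Finally I would combine these facts. A heavy gap $a \in A$ and its witness $t$ satisfy $(g - 1 - a) + (g - 1 - t) = 2g - 2 - (a + t) \ge 2g - 2 - \F(S)$, and for a depth-$2$ semigroup with $p$ large gaps one has $2g - 2 - \F(S) \ge 2p - 3$; since at most two heavy gaps occur (which forces $p \ge 3$), each heavy gap is compensated by its witness with surplus. Pairing heavy gaps with witnesses and estimating the remaining nonnegative-weight gaps, using that their weights are distinct nonnegative integers, one bounds $\Phi(A)$ from below and concludes $\Phi(A) \ge 0$. I expect the last accounting to be the main obstacle: a naive ``distinct weights'' estimate on the light gaps loses too much against the $\binom{|A|}{2}$ term, so one must exploit that only the few light \emph{large} gaps can have small weight (all small gaps have weight $\ge p - 1$) and argue the cases of one and of two heavy gaps in $A$ separately, the two–heavy case (possible only when the Durfee square has size $3$) being the tight one. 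The threshold $3$ is sharp, since the families of Theorems \ref{thm:inf_non_lam} and \ref{thm:2k1_3k1_intro} all have Durfee square of size at least $4$.
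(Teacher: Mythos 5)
Your reduction and bookkeeping are correct as far as they go: the identity $\Hook(\lambda(T)) = \N \setminus \A(T)$, the size formula
\[
|\lambda(S \sqcup A)| - |\lambda(S)| \;=\; \sum_{a \in A}\bigl(g-1-a\bigr) - \binom{|A|}{2} \;=:\; \Phi(A),
\]
the fact that the Durfee square of $\lambda(S)$ has size equal to the number of gaps $\ge g$, fact (i) that $\F(S)\notin A$, and fact (ii) about witnesses of heavy gaps are all true. The flaw is that every constraint you extract comes from only one half of the condition $\A(T)=S$, namely $\A(T)\cap \G(S)=\varnothing$ (each $a\in A$ needs a witness). You never use the other half, $S\subseteq \A(T)$, which forces $T\subseteq S^*$, i.e. $A\subseteq \M(S)$, and forces $A$ to be an order ideal of $(\M(S),\preccurlyeq)$ (Proposition \ref{semigroup subset}). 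This omission is fatal: the inequality $\Phi(A)\ge 0$ that your final accounting step is supposed to establish is \emph{false} for the class of sets $A$ satisfying all the facts you assembled, so no refinement of that accounting can close the argument. You flagged the accounting as the main obstacle; it is in fact an impassable one from your hypotheses.

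Concretely, take $m=20$, $F=39$, and $S=\{0,20,28,29,\ldots,36,40,\rightarrow\}$ (any such set is a semigroup of depth $2$ since $2m>F$). Its gaps are $\{1,\ldots,19\}\cup\{21,\ldots,27\}\cup\{37,38,39\}$, so $g=29$, $p=10$, the heavy gaps are $37,38,39$, and $\lambda(S)$ has Durfee square of size $3$. Writing $w(a)=g-1-a$ for the weight, take
\[
A=\{1,2,16,17,18,19\}\cup\{21,22,\ldots,27\}\cup\{37,38\}.
\]
Then $F\notin A$; the heavy gaps $37,38\in A$ have small witnesses $2,1\in A$ with $37+2=38+1=39\notin A$; each heavy--witness pair has weight sum $17=2p-3$; every small gap in $A$ has weight $\ge p-1=9$; and the light large gaps in $A$ have distinct weights $1,\ldots,7\le p-3$. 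So $A$ satisfies every constraint in your outline, including the refinements you propose for the tight two-heavy-gap case. Yet $|A|=15$, $\sum_{a\in A}w(a)=104$, and $\binom{15}{2}=105$, so $\Phi(A)=-1<0$. Of course this $A$ is not actually admissible, but only because $19\notin\M(S)$ (as $F-19=20\in S$) --- a constraint your argument never sees. The missing idea is exactly the structure the paper leans on: numerical sets associated to $S$ satisfy $S\subseteq T\subseteq S^*$ and correspond to order ideals of $(\M(S),\preccurlyeq)$ subject to Theorem \ref{thm:classify_num_set}; the paper's proof (Theorem \ref{general durfee}) uses Proposition \ref{Element in I} to obtain, for each large gap $F-\alpha_i$, the trichotomy ``$F-\alpha_i\notin T$, or $F-\alpha_i,\alpha_i\in T$, or an ideal-triangle alternative,'' and then runs a hook-packing argument (producing $n$ disjoint hooks in $\lambda(T)$ matching a hook decomposition of $\lambda(S)$) rather than a weight count. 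If you want to salvage your scheme, you must at minimum restrict $A$ to order ideals contained in $\M(S)$; without that, the statement your last step aims to prove is simply not true.
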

We note that if $S$ is the semigroup from Example \ref{ex:non_lam}, then $S$ has depth $2$, $\lambda(S)$ has a Durfee square of size $4$, and $S$ is not $\lambda$-minimal.

We use the term \emph{small elements} of a numerical semigroup $S$ for the positive elements of $S$ that are less than $\F(S)$. Some authors include $0$ and also the smallest element of $S$ larger than $\F(S)$ in the set of small elements of $S$.  The set we call the small elements of $S$ together with $0$ is sometimes called the set of \emph{sporadic elements} of $S$ or the set of \emph{left elements} of $S$.

The number of small elements of $S$ is one less than the number of columns of the Young diagram of $\lambda(S)$.
If $S$ is a numerical semigroup with $k$ small elements, then $\lambda(S)$ has a Durfee square of size at most $k+1$. Therefore, Theorem \ref{thm:Durfee} implies that a numerical semigroup of depth $2$ with at most $2$ small elements is $\lambda$-minimal.  We prove something stronger.

\begin{theorem}\label{thm:small_elements}
Let $S$ be a numerical semigroup of depth $2$.  If $S$ has at most $3$ small elements, then $S$ is $\lambda$-minimal.
\end{theorem}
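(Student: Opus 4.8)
The plan is to reduce to a single constrained family and then argue on the level of numerical sets. Since $S$ has at most three small elements, $\lambda(S)$ has a Durfee square of size at most $3+1=4$; if that size is at most $3$, then $S$ is $\lambda$-minimal by Theorem \ref{thm:Durfee}, so I may assume $S$ has exactly three small elements and $\lambda(S)$ has Durfee square of size exactly $4$. First I would record the shape of a depth-$2$ semigroup: because $\F(S) < 2\m(S)$, every sum of two positive elements of $S$ already exceeds $\F(S)$, so $S = \{0, m, a, b\} \cup \{\F(S)+1, \rightarrow\}$ with $m = \m(S) < a < b < \F(S) < 2m$. Writing $F = \F(S)$ and $r = \g(S)$ one gets $F = r+3$, and the Durfee-$4$ condition forces the first four rows of $\lambda(S)$ to have length $4$, equivalently $b \le F-4$.

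Next I would pass to numerical sets. Recall that a partition lies in $\mathcal{P}(S)$ exactly when it is the enumeration of a numerical set $T$ whose associated semigroup $\{h \in \N : h + T \subseteq T\}$ equals $S$; this forces $S \subseteq T$, hence $\G(T) \subseteq \G(S)$. Setting $E = T \setminus S \subseteq \G(S)$ and rewriting \eqref{eqn: formlua size partition} as $|\lambda(T)| = \sum_{g \in \G(T)} g - \binom{|\G(T)|}{2}$, the desired bound $|\lambda(T)| \ge |\lambda(S)|$ becomes the purely arithmetic inequality
\[
\sum_{e \in E} e \ \le\ \sum_{i=1}^{|E|} (r - i),
\]
to be proved for every admissible $E$. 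Equivalently, deleting the gaps in $E$ one at a time must never decrease the partition size, the change from deleting a gap $e$ while $N$ gaps remain being $(N-1)-e$.

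All the content is in which $E$ are admissible, i.e.\ in the two conditions making the associated semigroup of $T = S \cup E$ equal to $S$: closure, $S + E \subseteq T$, and maximality, that every $e \in E$ admits $t \in T$ with $t \ge 1$ and $e + t \in \G(S)\setminus E$. Using $F < 2m$ these become transparent once I split $E = E_L \sqcup E_M$ into low gaps ($< m$) and mid gaps ($> m$). Mid gaps satisfy closure automatically, but any maximality witness for a mid gap must be a low gap (from $e > m$ and $e + t \le F < 2m$ we get $t < m$); in particular $E \neq \emptyset$ forces $E_L \neq \emptyset$. Conversely a low gap $w \in E_L$ forces each of $m+w,\ a+w,\ b+w$ that is a gap $\le F$ into $E_M$ by closure — at most three mid gaps, and this is exactly where $c \le 3$ enters — and moreover $w \notin \{F-m, F-a, F-b\}$, since otherwise closure would drag the forbidden element $F$ into $T$ (and $F \in T$ would put $F$ back into the associated semigroup).

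The step I expect to be the main obstacle is converting this interlocking structure into the displayed inequality, because the constraints cut both ways. Deleting large mid gaps is what could shrink $\lambda(S)$ — deleting a single gap $e$ from $S$ changes the size by $(r-1)-e$, negative precisely for the large gaps — but the maximality condition makes every mid gap in $E$ pay for a low-gap witness, and each such low gap both enlarges the partition (its value is small) and, through closure, drags in up to three further mid gaps that must again be witnessed. I would make this precise by charging each mid gap to a witnessing low gap and tracking the forced mid gaps created by each low gap, using $c \le 3$ to bound the branching and $F < 2m$ to confine every witness to the small low gaps $w < F-m$; the penalties coming from $E_L$ then dominate the benefit from $E_M$, yielding the inequality with equality only when $E = \emptyset$. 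Carrying out this accounting uniformly across the whole family $S = \{0,m,a,b\} \cup \{F+1,\rightarrow\}$, rather than semigroup-by-semigroup, is the delicate part.
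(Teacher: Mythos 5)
Your setup is correct and genuinely different in flavor from the paper's proof: the reduction to three small elements with Durfee square $4$, the identity $|\lambda(T)|=\sum_{g\in \G(T)}g-\binom{|\G(T)|}{2}$, the resulting reformulation of $\lambda$-minimality as the gap-sum inequality $\sum_{e\in E}e\le \sum_{i=1}^{|E|}(\g(S)-i)$, and the structural constraints (closure $S+E\subseteq T$, the witness condition, witnesses of mid gaps being low gaps, $F\notin T$, $w\notin\{F-m,F-a,F-b\}$) all check out. The paper instead cuts $\lambda(T)$ into three sections, handles the middle section by a hook-length argument (Corollary~\ref{cor: section 2}), and builds four explicit disjoint hooks in the outer sections, using Proposition~\ref{Element in I} to control the cases $F-1,F-2\in T$. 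But your proposal stops exactly where the content begins: the charging argument by which "the penalties coming from $E_L$ dominate the benefit from $E_M$" is described, not executed, and you yourself flag it as the main obstacle. Nothing in the proposal bounds how many large mid gaps a single low gap can witness, nor controls the recursion in which the mid gaps forced by a low gap must themselves be witnessed; this interlocking is precisely what the paper resolves with its case analysis. As written, the theorem has been reduced to an unproven combinatorial inequality, so there is a genuine gap.

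Worse, the outcome you predict for the accounting is false, which shows the scheme cannot work in the strict form you describe. Equality in $\sum_{e\in E}e\le\sum_{i=1}^{|E|}(\g(S)-i)$ does not hold only for $E=\varnothing$: taking $E=\M(S)$ gives $T=S^*$, whose enumeration is the conjugate of $\lambda(S)$ by Proposition~\ref{prop:enum_Tdual} and hence has the same size. One can also see this arithmetically: in your family $\M(S)$ is closed under $x\mapsto F-x$, so $\sum_{x\in\M(S)}x=|\M(S)|F/2$, and $|\M(S)|=\g(S)-4=F-7$, which makes the inequality an exact equality. Since $\M(S)\neq\varnothing$ whenever $F>7$, any charging scheme in which every nonempty $E_L$ produces a strict surplus would "prove" $|\lambda(S^*)|>|\lambda(S)|$, a contradiction. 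A correct accounting must be exactly tight on such self-dual sets, and arranging that tightness — rather than the clean domination you anticipate — is where the real work of the theorem lies.
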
 
The numerical semigroup in Example~\ref{ex:non_lam} has $5$ small elements and is not $\lambda$-minimal.
We suspect that all numerical semigroups of depth $2$ with $4$ small elements are $\lambda$-minimal, although we cannot prove this. Moreover, we suspect that for any $n$, there are only finitely many numerical semigroups of depth $2$ that have $n$ small elements and are not $\lambda$-minimal.

In Section \ref{sec:small_type}, we focus on numerical semigroups of small type.  The idea of considering numerical semigroups in terms of this invariant goes back to an influential paper of Fr\"oberg, Gottlieb, and H\"aggkvist \cite{Fröberg198663}.  The set of \emph{pseudo-Frobenius numbers} of $S$ is
\[
\PF(S) = \{P \in \N \setminus S \colon P + s \in S \text{ for all } s \in S \setminus \{0\}\}.
\]
The number of pseudo-Frobenius numbers of $S$ is the \emph{type} of $S$, denoted $\type(S)$.  At the beginning of Section \ref{sec:small_type}, we explain how earlier results imply that numerical semigroups of type $1$ or $2$ are $\lambda$-minimal. 
We extend this to numerical semigroups of type~$3$.
\begin{theorem}
\label{thm:type3minimality}
Let $S$ be a numerical semigroup with $\type(S) = 3$.  Then $S$ is $\lambda$-minimal.
\end{theorem}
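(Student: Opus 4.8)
The plan is to pass from partitions to numerical sets and reduce $\lambda$-minimality to a single inequality controlled by the pseudo-Frobenius numbers. First I would use the standard dictionary (as in the references cited before Question~\ref{Q:1}): for a numerical set $T$ one has $\Hook(\lambda(T)) = \N\setminus S(T)$, where $S(T) = \{n\in\N : n+T\subseteq T\}$. Hence $\mathcal P(S)$ is in bijection with the numerical sets $T$ with $S(T)=S$, and every such $T$ contains $S$, so $T = S\sqcup B$ for a set of gaps $B\subseteq \G(S)$. I would record two facts. (i) $S(S\sqcup B)\supseteq S$ exactly when $B$ is a filter (up-set) of the poset $(\G(S),\preceq)$, where $a\preceq b$ means $b-a\in S$; the maximal elements of this poset are precisely $\PF(S)$. (ii) Chasing the definitions, $S(S\sqcup B)=S$ forces $\F(S)\notin B$, and for each non-Frobenius $P\in\PF(S)$ with $P\in B$ there must exist a witness $b\in B$ with $P+b\in\G(S)\setminus B$; conversely these conditions suffice. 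Condition (ii) is exactly what rules out ``cheap'' filters built only from large gaps, such as $B=\{P\}$ for a single pseudo-Frobenius $P$ with $2P>\F(S)$.

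Next I would establish the size formula. Starting from \eqref{eqn: formlua size partition} with $T=S\sqcup B$ and $\G(T)=\G(S)\setminus B$, a direct computation in which the count of gaps below a given box cancels yields
\[
|\lambda(S\sqcup B)|-|\lambda(S)| \;=\; \sum_{b\in B}\bigl(\g(S)-1-b\bigr)\;-\;\binom{|B|}{2}.
\]
(This can be sanity-checked against Example~\ref{ex:non_lam}, where $B=\{1,14,16\}$ and the right-hand side is $-1$.) Thus $S$ is $\lambda$-minimal if and only if $\sum_{b\in B}\bigl(\g(S)-1-b\bigr)\ge \binom{|B|}{2}$ for every filter $B$ satisfying the conditions in~(ii).

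The heart of the proof is this inequality when $\type(S)=3$, say $\PF(S)=\{P_1<P_2<\F(S)\}$. Since $\F(S)\notin B$ and every gap lies below some pseudo-Frobenius number, the filter $B$ is confined to the parts of the poset lying under $P_1$ or under $P_2$ but not under $\F(S)$. The only negative contributions to the sum come from the large gaps in $B$ (those exceeding $\g(S)-1$), and the mechanism that saves us is condition~(ii): to place a large gap, and in particular to place $P_1$ or $P_2$, into $B$, the witness requirement together with the filter-closure property forces one to also include enough smaller gaps, whose terms $\g(S)-1-b$ are positive. I would carry this out by a case analysis according to which of $P_1,P_2$ lie in $B$, in each case using a witness $b$ and the upward closure of $\{b\}$ to produce a controlled supply of small gaps in $B$ and bounding their positive weight against the negative weight of the large gaps and against $\binom{|B|}{2}$.

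The main obstacle is precisely this balancing step: bounding $\sum_{b\in B}\bigl(\g(S)-1-b\bigr)$ below by $\binom{|B|}{2}$ uniformly over all admissible filters, since the two witness conditions (one each for $P_1$ and $P_2$) interact and the filter may be spread across the two ``branches'' of the poset below $P_1$ and below $P_2$. The restriction to type $3$ is exactly what keeps this to two branches and two witness conditions; type $1$ degenerates since then $\mathcal P(S)=\{\lambda(S)\}$, and type $2$ leaves a single non-Frobenius pseudo-Frobenius number, which is why those cases are comparatively easy.
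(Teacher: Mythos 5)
Your reduction is correct as far as it goes, and the size formula is right: for $T = S \sqcup B$ one can check from Lemma \ref{set_counting} (or directly from \eqref{eqn: formlua size partition}) that
\[
|\lambda(T)| - |\lambda(S)| \;=\; \sum_{b\in B}\bigl(\g(S)-1-b\bigr) \;-\; \binom{|B|}{2},
\]
and your witness condition (ii) does match the characterization in Theorem \ref{thm:classify_num_set} (the witness $b$ is exactly the middle entry of a satisfied Frobenius triangle, or $b = \F(S)-P$). But the proof stops precisely where the theorem begins. Everything in the statement comes down to proving the inequality $\sum_{b\in B}(\g(S)-1-b) \ge \binom{|B|}{2}$ for every admissible filter $B$ when $\type(S)=3$, and for that you offer only a plan (case analysis on which of $P_1,P_2$ lie in $B$, witnesses supplying small gaps) that you yourself flag as the main obstacle. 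No bound is actually derived in any case, so this is a genuine gap rather than a routine verification left to the reader.

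There is also concrete reason to doubt that the plan, as stated, can be pushed through by generic weight-balancing. All three of your admissibility ingredients (filter property, $\F(S)\notin B$, witness for each pseudo-Frobenius element of $B$) are satisfied by $B=\{1,14,16\}$ for the type-$4$ semigroup of Example \ref{ex:non_lam}, where the inequality fails ($2 < 3$); so any correct argument must use type-$3$ structure in an essential, non-counting way, and nothing in your sketch identifies what that is. The paper's route is to import the classification of admissible order ideals for type $3$ from \cite{chen2023enumeratingnumericalsetsassociated} (Lemmas \ref{type 3 p(s) 2}, \ref{type 3 4 self dual}, \ref{type 3 ideals case}): writing $\PF(S)=\{P<Q<F\}$, either $\Pa(S)\le 2$ (handled by conjugation, Proposition \ref{prop:PS2_lam_min}), or every admissible ideal is self-dual (handled by Proposition \ref{self-dual-implies-lambda-S-small}), or the only admissible ideals besides $\varnothing$ and $\M(S)$ are $I_1=\{x\in\M(S)\colon Q-P\preccurlyeq x\}$ and its dual $I_2$; for $I_1$ the paper then builds the explicit injection $(s,i)\mapsto (Q-i,Q-s)$ rather than balancing weights. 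To complete your proof you would need either to re-derive this classification of admissible $B$ or to find a weighting argument that genuinely distinguishes type $3$ from type $4$; neither appears in the proposal.
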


It is not possible to extend this kind of result to semigroups of larger type.  We will show in Section \ref{sec:non-lam} that the semigroups $S_m$ from Theorem \ref{thm:inf_non_lam} all have $\type(S_m) = 4$, so there are infinitely many  semigroups of type $4$ that are not $\lambda$-minimal. We also note that the semigroups $S_{5,k,s}$ in Theorem \ref{thm:staircase} have type $4$, so there are infinitely many semigroups of type $4$ that are $\lambda$-minimal.

We describe the connection between partitions with a given set of hook lengths and numerical sets with a given atom monoid. Antokoletz and Miller in \cite{ANTOKOLETZ2002636} define the \emph{atom monoid} of a numerical set $T$ as 
\[
\A(T) = \{x \in \N \colon x + T \subseteq T\}.
\]
The atom monoid $A(T)$ is sometimes denoted as $T-T$ or $T:T$. 
It is not difficult to see that $\A(T) \subseteq T$ and that $\A(T)$ is closed under addition.  Therefore, $\A(T)$ is a numerical semigroup contained in $T$.  Moreover, if $S$ is a numerical semigroup, then $\A(S) = S$.  If $T$ is a numerical set with $\A(T) = S$, we say that $T$ is a \emph{numerical set associated to $S$}.  Numerical sets associated to $S$ are in bijection with partitions $\lambda$ with $\Hook(\lambda) = \N \setminus S$ \cite[Corollary 1]{KeithNath}.
In fact, $\mathcal{P}(S)=\{\lambda(T): \A(T)=S\}$.
This correspondence is also discussed in \cite[Section 2]{CONSTANTIN201799}.  Marzuola and Miller introduced the \emph{Anti-Atom Problem}, which asks for the number of numerical sets associated to a given numerical set $S$ \cite{MarzuolaMiller}.  This is equivalent to computing $\Pa(S)$.

The set of \emph{normalized ideals} of $S$ is
\[
\mathfrak{J}_0(S)=\{T\subseteq \mathbb{N}\colon  0\in T,\ T+S\subseteq T\}.
\]
For a discussion of normalized ideals of numerical semigroups, see \cite[Section 3]{BonzioGarciaSanchez}.  Further results in this direction are found in \cite{barucci2016class,casabella2024apery}. 
Note that all numerical sets associated to $S$ are normalized ideals of $S$. In fact a numerical set $T$ is a normalized ideal of $S$ if and only if $S\subseteq A(T)$.

The main focus of \cite{chen2023enumeratingnumericalsetsassociated} is on computing $\Pa(S)$. 
The results about $\Pa(S)$ proven in that paper are all phrased in terms of numerical sets associated to a numerical semigroup. 
The authors study a poset defined in terms of a subset of the gaps of $S$ and show that numerical sets associated to $S$ correspond to certain order ideals of this poset. 
In Section \ref{sec:void}, we prove that a particular class of order ideals corresponding to numerical sets associated to $S$ cannot produce partitions of size smaller than $|\lambda(S)|$.

\section{The Void Poset and Order Ideals}\label{sec:void}

A main result of \cite{chen2023enumeratingnumericalsetsassociated} is that numerical sets associated to a numerical semigroup $S$ correspond to certain order ideals of a poset constructed from a subset of the gaps of $S$. 
\begin{definition}[Void and void poset]
    Let $S$ be a numerical semigroup. The \emph{void} of $S$ is the set
    \[
    \M(S) = \{x\in \G(S) \colon \F(S)-x \in \G(S)\}.
    \]
The partial order on $\M(S)$ is defined as follows. Given $x, y \in \M(S)$, we say $x \preccurlyeq y$ if and only if $y-x \in S$. The partially ordered set $(\M(S),\preccurlyeq)$ is the \emph{void poset} of $S$.
\end{definition}
\noindent The void of a numerical semigroup $S$ is called the set of \textit{missing pairs} of $S$ in \cite{CONSTANTIN201799}. 
The elements of the void are referred to as \emph{holes of second type} in \cite{patil2007type}, as \emph{$h$-gaps} in \cite{aicardi2010gaps}, and simply as \emph{holes} in the \texttt{numericalsgps} pacakage \cite{NumericalSgps}.

We recall a basic property of $(\M(S),\preccurlyeq)$.  
\begin{prop}\cite[Lemma 2.1 and Proposition 2.2]{chen2023enumeratingnumericalsetsassociated}\label{prop:max_min_elements}
The maximal elements of $(\M(S),\preccurlyeq)$ are the elements of $\PF(S) \setminus \{\F(S)\}$ and the minimal elements of $(\M(S),\preccurlyeq)$ are the elements $\F(S) - P$ where $P \in \PF(S) \setminus \{\F(S)\}$.
\end{prop}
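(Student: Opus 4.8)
The plan is to prove both halves of the statement by directly unwinding the definitions of the void poset, the pseudo-Frobenius numbers, and the partial order $\preccurlyeq$, leaning on the symmetry $x \mapsto \F(S) - x$ that the definition of $\M(S)$ builds in. Throughout I write $F = \F(S)$ for brevity and set $\PF^* = \PF(S) \setminus \{F\}$.

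First I would establish that the map $\phi\colon x \mapsto F - x$ is an order-reversing involution on $\M(S)$. It is an involution on $\N$, and it preserves $\M(S)$: if $x \in \G(S)$ and $F - x \in \G(S)$, then swapping the roles of $x$ and $F-x$ shows $\phi(x) \in \M(S)$ as well. For the order-reversal, suppose $x \preccurlyeq y$, i.e.\ $y - x \in S$; then $\phi(x) - \phi(y) = (F-x) - (F-y) = y - x \in S$, so $\phi(y) \preccurlyeq \phi(x)$. Because $\phi$ is an order-reversing bijection of the poset onto itself, it interchanges maximal and minimal elements. Hence the second assertion (that the minimal elements are exactly $\{F - P : P \in \PF^*\}$) follows from the first assertion (that the maximal elements are exactly $\PF^*$) by applying $\phi$. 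So the whole proposition reduces to identifying the maximal elements of $(\M(S),\preccurlyeq)$ with $\PF^*$.

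For that identification I would argue by double containment. Take $P \in \PF^*$. By definition of $\PF(S)$, we have $P \in \G(S)$ and $P + s \in S$ for every nonzero $s \in S$; in particular $F - P \notin S$ would need checking, but $F - P \in \G(S)$ holds because otherwise $P = F - (F-P) \in S$ by the semigroup property applied to the nonzero element $F - P$, contradicting $P \in \G(S)$ — so $P \in \M(S)$. To see $P$ is maximal, suppose $P \preccurlyeq y$ with $y \in \M(S)$ and $y \neq P$; then $y - P$ is a nonzero element of $S$, so $P + (y-P) = y \in S$, contradicting $y \in \G(S)$. Conversely, if $m \in \M(S)$ is maximal, I would show $m \in \PF(S)$: for any nonzero $s \in S$, the element $m + s$ is either in $S$ or is a gap; if $m + s \in \G(S)$, then I need $m + s \in \M(S)$ to derive $m \prec m+s$ and contradict maximality. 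The first step shows $m \in \G(S)$; the delicate point is verifying $\F(S) - (m+s) \in \G(S)$ so that $m+s$ genuinely lands in the void. I expect this verification — reconciling ``$m+s$ is a gap'' with ``$F - (m+s)$ is a gap'' — to be the \textbf{main obstacle}, and I anticipate resolving it using the minimal-element description obtained via $\phi$ together with the fact that $F - m$ is then a minimal element, so that subtracting $s$ from $F - m$ keeps us inside $\G(S)$ by minimality. Once that is settled, $m + s \in S$ for all nonzero $s \in S$, giving $m \in \PF(S)$, and $m \neq F$ since $F$ is not in $\M(S)$ (as $F - F = 0 \in S$), completing $m \in \PF^*$.

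Since the argument for the minimal elements mirrors the maximal case under $\phi$, I would present the maximal-element computation in full and then invoke the involution to read off the minimal elements, rather than repeating the double-containment argument. The only care needed is to confirm that $\phi$ restricted to $\PF^*$ does land in the correct set: $\phi(P) = F - P$ is exactly the minimal element asserted, and no element of $\PF^*$ is fixed by $\phi$ unless $P = F/2$, which causes no problem because the claim is a statement about sets of elements, not about individual fixed points.
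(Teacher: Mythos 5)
The paper itself does not prove this proposition---it is quoted from \cite[Lemma 2.1 and Proposition 2.2]{chen2023enumeratingnumericalsetsassociated}---so your argument must stand on its own. Its architecture is sound: the map $\phi(x)=F-x$ is indeed an order-reversing involution of $(\M(S),\preccurlyeq)$, so the minimal-element claim does reduce to the maximal-element claim, and your proof that each $P\in\PF(S)\setminus\{F\}$ is maximal in the void poset is essentially correct. One deduction there is garbled, though: if $F-P\in S$, the contradiction is not ``$P\in S$'' as you wrote; rather, applying the pseudo-Frobenius property of $P$ to the nonzero semigroup element $F-P$ gives $F=P+(F-P)\in S$, contradicting $F\in\G(S)$. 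The conclusion is the same, but the step as written is a non sequitur.

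The genuine gap sits exactly where you flag the ``main obstacle,'' and the mechanism you propose there fails. You need: if $m$ is maximal in $\M(S)$, $s\in S\setminus\{0\}$, and $m+s\in\G(S)$, then $F-(m+s)\in\G(S)$. You propose to deduce this from the minimality of $F-m$ in the poset. But minimality of $F-m$ only excludes elements of $\M(S)$ lying strictly below it: applied to $z=F-m-s$ it yields $F-m-s\notin\M(S)$, which is the \emph{wrong direction}---it pushes $F-m-s$ out of the void, whereas you need to show it is a gap. What actually closes the argument is not minimality but closure of $S$ under addition combined with the fact that $F-m\in\G(S)$ (which holds because $m\in\M(S)$): if $F-(m+s)\in S$, then $F-m=(F-m-s)+s\in S$, contradicting $F-m\in\G(S)$. (Also note $F-m-s\ge 0$, since $m+s\in\G(S)$ forces $m+s\le F$.) With that one-line substitution, every case where $m+s\in\G(S)$ yields a contradiction, so $m+s\in S$ for all nonzero $s\in S$, hence $m\in\PF(S)$, and $m\ne F$ since $F\notin\M(S)$; the rest of your proposal then goes through as written.
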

Therefore, we can determine $\type(S)$ by counting the maximal elements of $(\M(S),\preccurlyeq~)$ and adding one for $\F(S)$.

\begin{definition}[The dual of a numerical set]
Let $T$ be a numerical set.  The \emph{dual} of $T$ is 
\[
T^* = \{x\in \Z \colon \F(T) - x\not\in T\}.
\]
\end{definition}
Marzuola and Miller in \cite[Proposition 1]{MarzuolaMiller} show that if $T$ is a numerical set associated to $S$, then $S \subseteq T \subseteq S^*$. It is shown in \cite[Lemma 3]{CONSTANTIN201799} that $S^* = S \cup \M(S)$.
The numerical set $S^*$ is also called the \emph{canonical ideal} of $S$. These results imply that numerical sets associated to $S$ are of the form $S \cup I$ where $I \subseteq \M(S)$.  

A numerical semigroup $S$ is called symmetric if $s\in S$ if and only if $\F(S)-s\notin S$. It is easy to see that a numerical is symmetric if and only if it coincides with its canonical ideal. This is equivalent to $\M(S)=\varnothing$. In fact, symmetric numerical semigroups are the only ones for which $\type(S)=1$ \cite{Fröberg198663}. They are also the only numerical semigroups for which $\Pa(S)=1$.

A main result of \cite{chen2023enumeratingnumericalsetsassociated} is to characterize the subsets $I\subseteq \M(S)$ that give numerical sets associated to $S$.

\begin{definition}[Order ideal]
    Let $S$ be a numerical semigroup and $I \subseteq \M(S)$. If $x \in I$ and $x \preccurlyeq y$ implies $y \in I$, then $I$ is an \emph{order ideal} of $(\M(S),\preccurlyeq)$.
\end{definition}

The following result gives an important property of $\A(S\cup I)$ for $I \subseteq \M(S)$.

\begin{prop}\cite[Proposition 2.4]{chen2023enumeratingnumericalsetsassociated}
\label{semigroup subset}
    Let $S$ be a numerical semigroup and $I \subseteq \M(S)$. Then $S \subseteq \A(S\cup I)$ if and only if $I$ is an order ideal of $(\M(S), \preccurlyeq)$.
\end{prop}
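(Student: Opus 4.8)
The plan is to set $T = S \cup I$ and unwind the definition of the atom monoid. Since $\A(T) = \{x \in \N : x + T \subseteq T\}$, the condition $S \subseteq \A(T)$ says precisely that $s + T \subseteq T$ for every $s \in S$. Because $S$ is closed under addition we already have $s + S \subseteq S \subseteq T$, so the only nontrivial content is the requirement that $s + x \in T = S \cup I$ for all $s \in S$ and $x \in I$. Equivalently, $S \subseteq \A(S \cup I)$ holds if and only if for every $s \in S$ and $x \in I$ with $s + x \notin S$ we have $s + x \in I$. I would prove both directions of the proposition against this reformulation.

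For the forward direction, assume $S \subseteq \A(S \cup I)$ and let $x \in I$ with $x \preccurlyeq y$ for some $y \in \M(S)$; by definition $s := y - x \in S$. Then $s + x = y \in \M(S) \subseteq \G(S)$, so $s + x \notin S$, and the reformulated condition forces $y = s + x \in I$. Hence $I$ is an order ideal.

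For the reverse direction, assume $I$ is an order ideal and fix $s \in S$ and $x \in I$. If $s + x \in S$ there is nothing to check, so suppose $s + x \in \G(S)$; since every integer exceeding $\F(S)$ lies in $S$, this gives $s + x \le \F(S)$, and in particular $\F(S) - (s+x) \ge 0$. The key step is to show that $s + x$ in fact lies in the void $\M(S)$, not merely in $\G(S)$: I must verify $\F(S) - (s+x) \in \G(S)$. Because $x \in \M(S)$ we know $\F(S) - x \in \G(S)$; if instead $\F(S) - (s + x) = (\F(S) - x) - s$ belonged to $S$, then adding $s \in S$ and using the closure of $S$ would give $\F(S) - x \in S$, contradicting $\F(S) - x \in \G(S)$. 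Thus $s + x \in \M(S)$. Since $(s+x) - x = s \in S$ we have $x \preccurlyeq s + x$, and the order ideal property yields $s + x \in I$, completing the verification.

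The main obstacle is the single observation in the reverse direction that $s + x$ remains inside $\M(S)$ and not just inside $\G(S)$; this is what makes the comparison $x \preccurlyeq s + x$ meaningful in the void poset and allows the order ideal hypothesis to be applied. Everything else is a direct translation of the definitions, and the argument uses the defining property $\F(S) - x \in \G(S)$ of the void together with the additive closure of $S$ in an essential way.
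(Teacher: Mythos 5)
Your proof is correct. Note that this paper does not actually prove Proposition~\ref{semigroup subset} itself; it is quoted from \cite{chen2023enumeratingnumericalsetsassociated}, so there is no internal proof to compare against. Your argument is a complete derivation from the definitions: the reformulation of $S \subseteq \A(S\cup I)$ as the condition that $s+x \in S \cup I$ for all $s \in S$, $x \in I$, and the key observation in the reverse direction that $s+x \in \G(S)$ forces $s+x \in \M(S)$ (since $\F(S)-(s+x) \in S$ would give $\F(S)-x \in S$ by additive closure), are exactly what is needed, and both directions are carried out without gaps.
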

This proposition says that a subset $I\subseteq \M(S)$ is an order ideal of $(\M(S),\preccurlyeq)$ if and only if $S\cup I$ is a normalized ideal of $S$. In order to characterize numerical sets associated to $S$, one must determine the conditions on an order ideal $I \subseteq \M(S)$ that imply $\A(S\cup I) \subseteq S$.

\begin{definition}[Ideal triangle]
Let $P,x,y \in \M(S)$ with $P+x+y = \F(S)$. We call $(P,x,y)$ an \emph{ideal triangle} of $S$.

Given an order ideal $I \subseteq \M(S)$ and an ideal triangle $(P,x,y)$ we say that $I$ \emph{satisfies} $(P,x,y)$ if $P, x\in I$ and $\F(S)-y\notin I$.
\end{definition}

The notion of a \emph{Frobenius triangle} introduced in \cite[Definition 3.1]{chen2023enumeratingnumericalsetsassociated} is a special case of an ideal triangle. 

\begin{definition}[Frobenius triangle]
An ideal triangle $(P,x,y)$ in which $P\in \PF(S)$ is called a \emph{Frobenius triangle} of $S$.
\end{definition}

We recall the characterization of numerical sets associated to a numerical semigroup $S$.
\begin{theorem}\cite[Theorem 3.9]{chen2023enumeratingnumericalsetsassociated}\label{thm:classify_num_set}
Let $S$ be a numerical semigroup, $I \subseteq \M(S)$, and $T = S \cup I$.  Then $T$ is a numerical set associated to $S$ if and only if 
\begin{enumerate}[leftmargin=*]
\item $I$ is an order ideal of $(\M(S),\preccurlyeq)$, and
\item for each $P \in I \cap \PF(S)$, one of the following conditions is satisfied: $2P\not\in S$, $\F(S) - P \in I$, or there is a Frobenius triangle $(P,x,y)$ that is satisfied by $I$.
\end{enumerate}
Moreover, if $S \cup I$ is a numerical set associated to $S$, then for all $P \in I \cap \PF(S)$ either $\F(S)-P \in I$ or there is a Frobenius triangle $(P,x,y)$ that is satisfied by $I$.
\end{theorem}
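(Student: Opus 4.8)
The plan is to reduce the statement to a purely local condition on the pseudo-Frobenius elements lying in $I$, and then to read off (b) [$\F(S)-P\in I$] and (c) [a satisfied Frobenius triangle] as exactly the two ways such an element can fail to be an atom of $T$. First I would record the basic reduction. In the forward direction $I$ is assumed, and in the reverse direction condition~(1) forces via Proposition~\ref{semigroup subset} that $I$ is an order ideal; in either case $S\subseteq \A(T)$, while always $\A(T)\subseteq T = S\cup I$. Hence $\A(T)=S\cup(\A(T)\cap I)$, so $T$ is associated to $S$ precisely when $\A(T)\cap I=\varnothing$, i.e.\ no element of $I$ is an atom of $T$. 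I would then reduce ``no atom in $I$'' to ``no atom in $\PF(S)\cap I$'': if the set $A=\A(T)\cap I$ of bad atoms is nonempty, its maximum $Q^*$ must be a maximal element of the void poset, for otherwise Proposition~\ref{prop:max_min_elements} gives $R\in\PF(S)$ with $Q^*\preccurlyeq R$, and then $R\in I$ (order ideal) and $R=Q^*+(R-Q^*)\in\A(T)$ (semigroup closure, since $R-Q^*\in S\subseteq\A(T)$), contradicting maximality. Thus $Q^*\in\PF(S)\cap I$.

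Next I would establish the local characterization: for $P\in\PF(S)\cap I$, the element $P$ is a non-atom of $T$ if and only if (b) or (c) holds. Since $P\in\PF(S)$ we have $P+(S\setminus\{0\})\subseteq S\subseteq T$, so $P$ is an atom if and only if $P+q\in T$ for every $q\in I$. The structural heart is the following sublemma: if $q\in\M(S)$ and $P+q\in\G(S)$, then either $P+q=\F(S)$ or $P+q\in\M(S)$. Indeed, if $P+q\in\G(S)\setminus\M(S)$ and $P+q\neq\F(S)$, then $\F(S)-(P+q)\in S\setminus\{0\}$, whence $\F(S)-q=P+(\F(S)-P-q)\in S$ by the pseudo-Frobenius property, contradicting $q\in\M(S)$. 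Given this, a witness $q\in I$ with $P+q\in\G(S)\setminus I$ is either $q=\F(S)-P$ (which is the statement that (b) holds) or satisfies $P+q\in\M(S)\setminus I$, which is exactly the assertion that the Frobenius triangle $(P,q,\F(S)-P-q)$ is satisfied by $I$ (so (c) holds); conversely each of (b) and (c) manufactures such a witness, using that $\F(S)-y=P+x$ in the definition of ``satisfies.'' In particular, an atom $P\in\PF(S)\cap I$ has both (b) and (c) failing.

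The two directions then follow quickly. For necessity and the ``moreover'' clause: if $T$ is associated to $S$, then (1) holds by Proposition~\ref{semigroup subset}, and every $P\in\PF(S)\cap I$ is a non-atom, so (b) or (c) holds for each such $P$; this gives (2) in its strong form. For sufficiency: assuming (1) and (2), if $T$ were not associated, the maximal bad atom $Q^*\in\PF(S)\cap I$ from the reduction has (b) and (c) failing, so condition~(2) forces its remaining alternative $2Q^*\notin S$; but then $2Q^*=Q^*+Q^*$ is again a bad atom lying in $I$ and strictly larger than $Q^*$, contradicting maximality.

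The step I expect to be most delicate, and the one I would flag explicitly, is the role of condition~(a) [$2P\notin S$]. It is tempting but \emph{false} to try to prove ``(a) implies $P$ is a non-atom''; an atom can satisfy (a) (the offending behaviour is instead detected at the element $2P$, which may itself be a pseudo-Frobenius number in $I$). The correct resolution is that (a) must not be handled elementwise but through the maximal-atom argument above: condition~(a) is precisely what produces a strictly larger atom $2P\in I$, so it is the maximality of $Q^*$, rather than any local consequence of (a), that rules out the pathological case. Aligning the sublemma and the orientation of the ``satisfies'' condition with the definitions is the other place where care is needed.
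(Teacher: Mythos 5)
Your proposal is correct, but there is nothing in this paper to compare it against: Theorem~\ref{thm:classify_num_set} is quoted from the reference (Theorem~3.9 of the cited work on enumerating numerical sets) and the present paper gives no proof of it, using it only as a black box. Judged on its own merits, your argument is a valid, self-contained derivation that uses only the two facts the paper does quote, namely Proposition~\ref{semigroup subset} (order ideal $\Leftrightarrow$ $S \subseteq \A(T)$) and Proposition~\ref{prop:max_min_elements} (maximal elements of the void poset are $\PF(S)\setminus\{\F(S)\}$). The three load-bearing steps all check out: (i) the reduction of ``$T$ associated to $S$'' to $\A(T)\cap I=\varnothing$, since $S\subseteq \A(T)\subseteq T=S\cup I$ holds in both directions of the equivalence; (ii) the sublemma that for $P\in\PF(S)$ and $q\in\M(S)$ with $P+q\in\G(S)$ one has $P+q=\F(S)$ or $P+q\in\M(S)$ (the pseudo-Frobenius property applied to $\F(S)-P-q\in S\setminus\{0\}$ does exactly what you claim), which makes a witness $q\in I$ with $P+q\notin T$ correspond precisely to alternative (b) or to a satisfied Frobenius triangle $(P,q,\F(S)-P-q)$, i.e.\ alternative (c); and (iii) the treatment of the alternative $2P\notin S$ not elementwise but through the integer-maximal element $Q^*$ of $\A(T)\cap I$, where $2Q^*\in\A(T)\setminus S\subseteq I$ produces a larger bad atom. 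Your closing remark about (a) is exactly the right diagnosis: the ``moreover'' clause shows that for an associated set the condition $2P\notin S$ is never the operative alternative, and your proof explains why --- it can only hold vacuously, since otherwise it manufactures the contradiction at $2Q^*$. The one cosmetic flaw is the sentence ``In the forward direction $I$ is assumed,'' which presumably means ``the hypothesis $\A(T)=S$ is assumed, whence $I$ is an order ideal by Proposition~\ref{semigroup subset}''; as written it is garbled, but the surrounding argument makes the intended logic unambiguous.
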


The \emph{special gaps} of $S$ are the gaps $h\in \G(S)$ for which $\{h\}\cup S$ is a numerical semigroup.  It is easy to see that the special gaps of $S$ are precisely the $P\in\PF(S)$ for which $2P\in S$. For more information, see \cite[Chapter 3]{rosales_numerical_2009}. Thus, condition (2) of Theorem~\ref{thm:classify_num_set} says that all special gaps $P$ of $S$ that are in $I$ satisfy: either $\F(S)-P\in I$, or $I$ satisfies some Frobenius triangle $(P,x,y)$.

In \cite{chen2023enumeratingnumericalsetsassociated} it is shown that there is a class of order ideals of $(\M(S),\preccurlyeq)$ that always satisfy the hypotheses of Theorem~\ref{thm:classify_num_set}.
\begin{definition}[Self-dual order ideal]
    Let $S$ be a numerical semigroup and $I\subseteq \M(S)$ be an order ideal of $(\M(S),\preccurlyeq)$. If $x \in I$ implies $\F(S) -x \in I$, then $I$ is called \emph{self-dual}.
\end{definition}

\begin{prop}\cite[Proposition 3.2]{chen2023enumeratingnumericalsetsassociated}
If $S$ is a numerical semigroup and $I$ is a self-dual order ideal of $(\M(S),\preccurlyeq)$, then $S \cup I$ is a numerical set associated to $S$.
\end{prop}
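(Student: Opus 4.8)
The plan is to verify the two conditions of Theorem~\ref{thm:classify_num_set} for a self-dual order ideal $I$. By hypothesis, $I$ is already an order ideal, so condition (1) is immediate and all the work goes into condition (2). Fix an arbitrary $P \in I \cap \PF(S)$; I must exhibit one of the three alternatives in condition (2). The key observation is that self-duality hands me the second alternative essentially for free: since $P \in I$ and $I$ is self-dual, applying the defining property of self-duality with $x = P$ gives $\F(S) - P \in I$. This is exactly the alternative ``$\F(S) - P \in I$'' from condition (2), so that condition is satisfied without ever needing to produce a Frobenius triangle or to check whether $2P \in S$.

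Before applying self-duality I should make sure the expression $\F(S) - P$ makes sense as an element of $\M(S)$, since self-duality is a statement about elements of $I \subseteq \M(S)$. Because $P \in I \subseteq \M(S)$, the definition of the void gives $P \in \G(S)$ and $\F(S) - P \in \G(S)$; moreover $\F(S) - (\F(S)-P) = P \in \G(S)$, so $\F(S) - P$ again satisfies the defining condition of $\M(S)$ and hence lies in $\M(S)$. Thus the self-dual hypothesis applies verbatim to $x = P$ and yields $\F(S) - P \in I$. I should note that $P \neq \F(S)$ here, since $\F(S) \notin \M(S)$ (one would need $\F(S) - \F(S) = 0 \in \G(S)$, which is false), so all the elements under discussion are genuine void elements and no degenerate case intrudes.

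Assembling these pieces completes the argument: $I$ is an order ideal, and for every $P \in I \cap \PF(S)$ the alternative $\F(S) - P \in I$ holds, so both conditions of Theorem~\ref{thm:classify_num_set} are met and $T = S \cup I$ is a numerical set associated to $S$. I do not expect any genuine obstacle in this proof; the content is really a one-line consequence of unwinding the definition of self-dual and matching it against the middle clause of condition (2). The only point demanding a modicum of care is the bookkeeping check that $\F(S) - P \in \M(S)$, ensuring self-duality is being invoked on a legitimate element, but this follows directly from the symmetry built into the definition of the void $\M(S)$.
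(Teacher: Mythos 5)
Your proof is correct. Note, however, that the paper itself gives no proof of this proposition: it is quoted verbatim from \cite[Proposition 3.2]{chen2023enumeratingnumericalsetsassociated}, where it appears \emph{before} the classification result that the paper records as Theorem~\ref{thm:classify_num_set} (Theorem 3.9 of that reference), and is therefore proved there by a direct argument not relying on that theorem. Your route runs in the opposite direction: you take the full classification theorem as given and observe that self-duality instantly supplies the middle alternative of condition (2), namely $\F(S)-P\in I$ for every $P\in I\cap\PF(S)$, with condition (1) holding by hypothesis. This is a perfectly sound deduction within the present paper, where both statements are imported as black boxes, and it buys extreme brevity --- the proposition becomes a one-line corollary, and your side checks (that $\F(S)-P\in\M(S)$ by the symmetry of the void, and that $P\neq\F(S)$ since $\F(S)\notin\M(S)$) are exactly the right bookkeeping. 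The one caveat worth keeping in mind is logical dependency: if one were reconstructing the source paper's development from scratch, deriving Proposition 3.2 from Theorem 3.9 would be circular in the event that the proof of Theorem 3.9 invokes Proposition 3.2; your argument is only as self-contained as the classification theorem is independent of the proposition. As a citation-level verification inside this paper, though, your proof is complete and arguably the most efficient one available.
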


Self-dual order ideals of $(\M(S),\preccurlyeq)$ give numerical sets associated to $S$, but we now show that the partition associated to such a numerical set cannot have size smaller than $|\lambda(S)|$.
\begin{prop}
\label{self-dual-implies-lambda-S-small}
    Let $S$ be a numerical semigroup, and let $T = S \cup I$ be a numerical set associated to $S$ where $I$ is a self-dual order ideal of $(\M(S),\preccurlyeq)$. Then $|\lambda(S)|\leq|\lambda(T)|$.
\end{prop}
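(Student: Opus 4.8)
The plan is to avoid the order-ideal machinery of Theorem~\ref{thm:classify_num_set} entirely and instead work with an explicit formula for the size of a partition in terms of the gaps of its numerical set. Starting from \eqref{eqn: formlua size partition}, if the gaps of a numerical set $T$ are $g_1 < g_2 < \cdots < g_r$ (so $r = \g(T)$), then among the integers in $[0,g_i)$ exactly $i-1$ are gaps, so $\#\{u \in T \colon u < g_i\} = g_i - (i-1)$. Summing yields the clean identity
\[
|\lambda(T)| = \sum_{x \in \G(T)} x - \binom{\g(T)}{2},
\]
which I would record as a short preliminary computation, since it drives the whole argument.

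Next I would apply this identity to both $S$ and $T = S \cup I$. Writing $k = |I|$ and $g = \g(S)$, and using that $\G(T) = \G(S) \setminus I$ has $\g(T) = g - k$, subtracting the two expressions collapses the binomial coefficients and gives
\[
|\lambda(T)| - |\lambda(S)| = \frac{k(2g - k - 1)}{2} - \sum_{x \in I} x.
\]
Thus the proposition reduces to the single inequality $\sum_{x \in I} x \le \tfrac{1}{2}k(2g - k - 1)$.

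This is the point where self-duality enters. Because $I$ is self-dual, the involution $x \mapsto \F(S) - x$ maps $I$ to itself, so $I$ partitions into classes $\{x, \F(S)-x\}$ each summing to $\F(S)$, with a possible fixed point $x = \F(S)/2$ contributing $\F(S)/2$; in every parity case one gets $\sum_{x \in I} x = \tfrac{1}{2}k\,\F(S)$. Substituting, the required inequality simplifies to $k \le 2\g(S) - \F(S) - 1$.

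The last step, which I regard as the crux, is the identity $|\M(S)| = 2\g(S) - \F(S) - 1$; once it is available, $k = |I| \le |\M(S)| = 2\g(S) - \F(S) - 1$ finishes the proof, with equality precisely when $I = \varnothing$ (so $T = S$) or $I = \M(S)$. To establish the identity I would count $\#\{x \colon x \in \G(S),\ \F(S)-x \in \G(S)\}$ by writing the intersection indicator as $\mathbb{1}[x \in \G(S)] + \mathbb{1}[\F(S)-x \in \G(S)] - \mathbb{1}[x \in \G(S) \text{ or } \F(S)-x \in \G(S)]$ and summing over $x \in \{0,1,\dots,\F(S)\}$; the first two sums each give $\g(S)$, and the complementary event ``$x \in S$ and $\F(S)-x \in S$'' never occurs, since it would force $\F(S) = x + (\F(S)-x) \in S$. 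Hence the union exhausts all $\F(S)+1$ integers and $|\M(S)| = 2\g(S) - (\F(S)+1)$. The only slightly delicate points are the fixed-point bookkeeping in the self-dual pairing and the trivial $k=0$ case, both of which are routine.
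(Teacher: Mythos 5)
Your proof is correct, and it takes a genuinely different route from the paper's. The paper first proves a counting lemma (Lemma~\ref{set_counting}) expressing $|\lambda(T)| = |\lambda(S)| + |A| - |B|$ for explicit sets of pairs $A$ and $B$, and then uses self-duality to build an injection $B \to A$, $(s,i) \mapsto (\F(S)-i, \F(S)-s)$; the inequality follows without computing either side. You instead evaluate everything exactly: the identity $|\lambda(T)| = \sum_{x\in\G(T)} x - \binom{\g(T)}{2}$, the cancellation $\binom{g}{2} - \binom{g-k}{2} = \tfrac{1}{2}k(2g-k-1)$, the self-duality sum $\sum_{x\in I} x = \tfrac{1}{2}k\,\F(S)$ (obtainable even more cleanly by averaging $x$ and $\F(S)-x$ over $I$, which avoids the fixed-point bookkeeping), and the classical identity $|\M(S)| = 2\g(S) - \F(S) - 1$, whose inclusion-exclusion proof you give correctly. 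Your route buys strictly more information: combining your steps gives the exact formula
\[
|\lambda(T)| - |\lambda(S)| \;=\; \frac{|I|\left(|\M(S)| - |I|\right)}{2},
\]
which quantifies the size increase and characterizes equality ($I = \varnothing$, i.e.\ $T = S$, or $I = \M(S)$, i.e.\ $T = S^*$, whose partition is the conjugate of $\lambda(S)$). Moreover, your argument never uses that $I$ is an order ideal or that $\A(T) = S$, only that $I$ is a self-dual subset of $\M(S)$, so it proves a slightly more general statement. What the paper's injection method buys in exchange is reusability within its own framework: the pair-counting decomposition of Lemma~\ref{set_counting} is deployed again later (in the type-$3$ analysis of Section~\ref{sec:small_type}), in situations where an exact evaluation like yours is not available.
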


We apply the following lemma in the proof of this proposition.
\begin{lemma}
    \label{set_counting}
    Let $S$ be a numerical semigroup and let $I$ be an order ideal of $(\M(S),\preccurlyeq)$. Suppose $T= S \cup I$ is a numerical set associated to $S$. Then 
    \[
    |\lambda(T)| = |\lambda(S)| + |A| - |B|,
    \]
    where 
    \[
    A=\{(i,h)\colon i\in I,\ h \in \G(T),\ i<h\}\quad\text{and}\quad B= \{(s,i)\colon s\in S,\ i\in I,\ s<i\}.
    \]
\end{lemma}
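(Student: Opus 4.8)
The plan is to start from the counting formula \eqref{eqn: formlua size partition}, which expresses $|\lambda(T)|$ as the number of pairs $(u,x)$ with $u \in T$, $x \in \G(T)$, and $u < x$. The two structural facts that drive the argument are: first, that the union $T = S \cup I$ is disjoint, since $I \subseteq \M(S) \subseteq \G(S)$ means no element of $I$ lies in $S$; and second, that passing from $S$ to $T$ simply moves the elements of $I$ out of the gap set and into the set, so that $\G(T) = \G(S) \setminus I$. Both identities follow immediately from $I \subseteq \M(S)$, and they also guarantee that $T$ is a genuine numerical set (it contains $0 \in S$ and has finite complement $\G(T) \subseteq \G(S)$), so that $\lambda(T)$ and formula \eqref{eqn: formlua size partition} make sense.

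With these facts in hand, I would split the count defining $|\lambda(T)|$ according to whether the first coordinate $u$ lies in $S$ or in $I$. Using the disjointness $T = S \sqcup I$, this writes $|\lambda(T)|$ as the sum of $\#\{(s,x)\colon s\in S,\ x\in\G(T),\ s<x\}$ and $\#\{(i,x)\colon i\in I,\ x\in\G(T),\ i<x\}$. The second summand is, by definition, exactly $|A|$.

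For the first summand, I would rewrite the condition $x \in \G(T)$ as $x \in \G(S)$ together with $x \notin I$, using $\G(T) = \G(S)\setminus I$. Since $I \subseteq \G(S)$, the pairs $(s,x)$ with $s \in S$, $x \in \G(S)$, and $s < x$ that get \emph{excluded} when we restrict to $x \notin I$ are precisely those with $x \in I$, that is, the pairs counted by $B$. Applying formula \eqref{eqn: formlua size partition} to $S$ itself identifies $\#\{(s,x)\colon s\in S,\ x\in\G(S),\ s<x\}$ with $|\lambda(S)|$, so the $u \in S$ contribution equals $|\lambda(S)| - |B|$. Summing the two contributions yields $|\lambda(T)| = |\lambda(S)| + |A| - |B|$, as claimed.

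There is no serious obstacle here; the proof is a short bookkeeping computation. The only point that requires care is the verification of the two set identities $T = S \sqcup I$ and $\G(T) = \G(S)\setminus I$, on which the entire decomposition rests. I would note in passing that this particular identity uses only that $I$ is a set of gaps of $S$: neither the order-ideal hypothesis nor the assumption that $T$ is associated to $S$ is actually invoked, though stating the lemma under these hypotheses matches the setting in which it is later applied (for instance in Proposition~\ref{self-dual-implies-lambda-S-small}).
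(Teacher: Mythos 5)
Your proof is correct and follows essentially the same route as the paper's: split the pairs counting $|\lambda(T)|$ by whether the first coordinate lies in $S$ or in $I$, identify the $I$-part with $|A|$, and use $\G(T) = \G(S)\setminus I$ to rewrite the $S$-part as $|\lambda(S)| - |B|$. Your closing observation—that only $I \subseteq \G(S)$ is needed, not the order-ideal or associated-numerical-set hypotheses—is accurate and consistent with the paper's proof, which likewise never invokes those assumptions.
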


\begin{proof}
    \sloppy
We recall the discussion in the paragraph ending with equation \eqref{eqn: formlua size partition}.  Each box in the Young diagram of $\lambda(T)$ is $\Box(t,h)$ for some $t<h$, with $t \in T$ and $h \in \G(T)$. 
We can determine $|\lambda(T)|$ by counting such pairs.  We have
    \begin{eqnarray*}
    |\lambda(S)| & = & \#\{(s, g) \colon s \in S,\ g\in \G(S),\ s<g\} \\
    |\lambda(T)| & = & \#\{(t, h) \colon t \in T,\  h\in \G(T),\ t<h\}.
    \end{eqnarray*}
    Since $T= S\cup I$ for some $I\subseteq \M(S)$, we have  
    \[
    |\lambda(T)| = \#\{(s,h)\colon s\in S,\ h\in \G(T),\ s<h\} + \#\{(i, h)\colon i\in I,\ h\in \G(T),\ i<h\}.
    \]
That is, 
    \[ 
    |\lambda(T)| = \#\{(s,h)\colon s\in S,\ h\in \G(T),\ s<h\} + |A|.
    \]
We note that $\G(T) =  \G(S) \setminus I$. This implies
    \[
    |\lambda(T)| = \#\{(s,g)\colon s\in S,\ g\in \G(S),\ s<g\} - \#\{(s, i)\colon s\in S,\ i\in I,\ s<i\} + |A|,
    \]
completing the proof.
\end{proof}

\begin{proof}[Proof of Proposition \ref{self-dual-implies-lambda-S-small}]
    Let $\F(S) = F$ and define the sets $A$ and $B$ as in the statement of Lemma \ref{set_counting}. Consider the function $f \colon B\rightarrow A$ defined by $f(s,i) = (F-i,F-s)$. Because $I$ is self-dual and $i \in I$, we know that $F-i\in I$. Since $s \in S$, it is clear that $F-s \in \G(S) \setminus \M(S)$. This implies $F-s \notin T$. Since $s< i$, we have $F-i < F-s$. So $f$ is well-defined. It is easy to see that $f$ is injective. Hence, $|\lambda(S)|\leq |\lambda(T)|$.
\end{proof}

Theorem \ref{thm:classify_num_set} gives a characterization of the order ideals $I$ of $(\M(S),\preccurlyeq)$ for which $T = S\cup I$ is a numerical set associated to $S$.  In particular, such an order ideal gives a numerical set corresponding to $S$ if and only if for each $P\in I\cap \PF(S)$ one of two conditions is satisfied.  We show that a similar statement holds for other elements of $I$ that are not necessarily pseudo-Frobenius numbers of $S$.
\begin{prop}
    \label{Element in I}
    Let $S$ be a numerical semigroup and $I$ be an order ideal of $(\M(S),\preccurlyeq)$ such that $T = S\cup I$ is a numerical set associated to $S$. 
For each $x \in I$ one of the following conditions holds:
    \begin{enumerate}
        \item $\F(S)-x\in I$, or
        \item there is an ideal triangle $(x,y,z)$ satisfied by $I$.
    \end{enumerate}
\end{prop}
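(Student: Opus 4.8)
The plan is to dispose of the case $\F(S)-x\in I$ immediately, since this is exactly condition (1), and to concentrate all the work on the complementary case $\F(S)-x\notin I$, where I must manufacture an ideal triangle. Throughout I would write $F=\F(S)$ and exploit that, because $I$ is an order ideal and $T=S\cup I$ is associated to $S$, we have $\A(T)=S$. I would use both halves of this equality: the containment $S\subseteq\A(T)$ says precisely that $s+T\subseteq T$ for every $s\in S$ (the $S$-ideal property), while $\A(T)\subseteq S$ says that no gap of $S$ absorbs $T$.

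First I would record the key fact about $F-x$. Since $x\in\M(S)$ we have $x\in\G(S)$ and $F-x\in\G(S)$, so $F-x\in\M(S)$ as well; combined with the case assumption $F-x\notin I$ this gives $F-x\in\G(S)\setminus I=\G(T)$, that is, $F-x\notin T$. This single observation is what drives the whole argument. Next I would extract a witness from the atom monoid: as $x\in I\subseteq\G(S)$ we have $x\notin S=\A(T)$, so $x+T\not\subseteq T$, and I may choose $t\in T$ with $m:=x+t\in\G(T)$ (such $t$ is nonzero, since $x\in T$).

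I would then show that $z:=F-m=F-x-t$ lies in $\M(S)$. Indeed $m\le F$, and $m\ne F$ because $m=F$ would force $t=F-x\notin T$; hence $z\ge 1$. If instead $z\in S$, then $t+z\in T$ by the $S$-ideal property while $x+(t+z)=F$, forcing $F-x=t+z\in T$ and contradicting $F-x\notin T$. Therefore $z\in\G(S)$, and since $F-z=m\in\G(T)\subseteq\G(S)$ I conclude that both $m$ and $z$ lie in $\M(S)$.

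The final and, I expect, most delicate step is to upgrade the witness $t$ from $T$ to $I$, so that $y:=t$ can serve as the second vertex of the triangle. Here I would invoke that $I$ is an order ideal of $(\M(S),\preccurlyeq)$. Since $m,x\in\M(S)$ with $m-x=t$, if $t\in S$ then $x\preccurlyeq m$, and up-closedness together with $x\in I$ would force $m\in I$, contradicting $m\in\G(T)$. Hence $t\notin S$, so $t\in I$. Setting $y=t$, the triple $(x,y,z)$ satisfies $x+y+z=F$ with all three entries in $\M(S)$, so it is an ideal triangle; and $I$ satisfies it, because $x,y\in I$ while $F-z=m\notin I$, which is condition (2). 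The crux of the argument is the interplay in $\A(T)=S$: the inclusion $\A(T)\subseteq S$ produces the witness $t$ and, through $F-x\notin T$, forces $F-x-t$ to be a gap; the inclusion $S\subseteq\A(T)$ is what makes that forcing work; and the order-ideal hypothesis is then exactly what promotes $t$ into $I$.
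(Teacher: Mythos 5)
Your proof is correct, but it takes a genuinely different route from the paper's. The paper does not case-split on whether $\F(S)-x\in I$; instead it lifts $x$ to a maximal element $P\in\PF(S)\setminus\{\F(S)\}$ with $x\preccurlyeq P$ (so $P\in I$ by up-closedness), invokes the ``Moreover'' clause of Theorem~\ref{thm:classify_num_set} to conclude that either $\F(S)-P\in I$ or $I$ satisfies a Frobenius triangle $(P,y,z)$, and then transports these facts down to $x$: in the first case $\F(S)-P\preccurlyeq\F(S)-x$ forces $\F(S)-x\in I$, and in the second case the Frobenius triangle is translated by $s=P-x\in S$ to the ideal triangle $(x,y+s,z)$, the main work being the check that $y+s\in\M(S)$. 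Your argument bypasses the classification theorem entirely: you extract a witness $t\in T$ with $x+t\notin T$ directly from $x\notin S=\A(T)$, and verify that $(x,t,\F(S)-x-t)$ is an ideal triangle satisfied by $I$ using only $S+T\subseteq T$ and the order-ideal property. What the paper's route buys is economy given machinery already in place, plus the structural point that ideal triangles at arbitrary elements of $I$ arise from Frobenius triangles by translation. What your route buys is self-containedness, and in fact a little more: specializing your argument to $x=P\in\PF(S)$ produces a triangle whose first entry is a pseudo-Frobenius number, i.e., a Frobenius triangle, so your proof reproves the relevant direction of the ``Moreover'' clause of Theorem~\ref{thm:classify_num_set} rather than consuming it. The individual steps all check out: $t\neq 0$, the bound $m=x+t<\F(S)$ via $\F(S)-x\notin T$, the exclusion $z=\F(S)-m\notin S$ via $S+T\subseteq T$, the membership $m,z\in\M(S)$, and the promotion of $t$ into $I$ via up-closedness applied to $x\preccurlyeq m$.
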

\begin{proof}
Let $\F(S)=F$. Since the elements of $\PF(S)\setminus\{F\}$ are the maximal elements of $(\M(S),\preccurlyeq)$, we know that there is some $P\in \PF(S)\setminus\{F\}$ such that $x\preccurlyeq P$. Since $I$ is an order ideal, we know that $P\in I$. Theorem~\ref{thm:classify_num_set} implies that either $F-P\in I$ or $I$ satisfies a Frobenius triangle of the form $(P,y,z)$.
\begin{enumerate}[leftmargin=*]
\item Suppose $F-P\in I$. Note that $x\preccurlyeq P$ implies $F-P\preccurlyeq F-x$, so $F-x\in I$.

\item Suppose $I$ satisfies a Frobenius triangle $(P,y,z)$. Let $s=P-x$. We know that $s\in S$ since $x\preccurlyeq P$. Let $y_1=y+s$. We want to show that $y_1\in \M(S)$. 

First, if $y_1\in S$, then $(F-z)-x= (P+y)-x=y+s=y_1$. This means $x\preccurlyeq F-z$. However, $x\in I$ and $F-z\notin I$. This contradicts the fact that $I$ is an order ideal. Thus, $y_1\notin S$. 

On the other hand, if $F-y_1\in S$, then $F-y= (F-y_1)+s\in S$. This contradicts the fact that $y\in \M(S)$. Thus, $F-y_1\notin S$.

We conclude that $y_1\in \M(S)$ and hence $(x,y_1,z)$ is an ideal triangle. Since $y\in I$, we know that $y_1=y+s\in I$. Also, $F-z\notin I$ since $I$ satisfies the Frobenius triangle $(P,y,z)$. Therefore, $I$ satisfies the ideal triangle $(x,y_1,z)$.
\end{enumerate}

\end{proof}

\section{Numerical Semigroups that are not $\lambda$-minimal}\label{sec:non-lam}

The focus of this section is on numerical semigroups that are not $\lambda$-minimal.  We first return to the semigroup $S$ from Example \ref{ex:non_lam}.
\begin{example}\label{ex:non_lam_I}
     Let $S=\langle 9,10,11,12,13\rangle$ and consider the order ideal $I=\{1,14,16\} \subseteq \M(S)$. It is not difficult to check that $T=S\cup I$ is a numerical set associated to $S$ for which 
     \[
     |\lambda(S)|=32 > 31=|\lambda(T)|.
     \] 
     Hence, $S$ is not $\lambda$-minimal.
\end{example}

We now consider an infinite family of numerical semigroups containing the semigroup $S = \langle 9,10,11,12,13\rangle$.
\begin{lemma}\label{Frob}
    Let $m \ge 9$ and $S=\langle m,m+1,m+2,...,2m-5\rangle$. Then 
    \[
    \G(S)  =   \{1,2,\ldots, m-1, 2m-4,2m-3,2m-2,2m-1\}.
    \]
    \indent We see that
    \[
    \PF(S)  =   \{2m-4,2m-3,2m-2,2m-1\},
    \]
    and in particular, $\F(S) = 2m-1$.
\end{lemma}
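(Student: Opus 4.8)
The plan is to determine the semigroup $S = \langle m, m+1, \ldots, 2m-5\rangle$ explicitly by analyzing which integers are representable as nonnegative combinations of the generators, organized by how many generators are summed. First I would observe that the generators are exactly the integers in the interval $[m, 2m-5]$, so $S$ contains $0$, the block $[m, 2m-5]$, and all sums of two or more such generators. The single generators cover $[m, 2m-5]$, leaving $\{2m-4, 2m-3, 2m-2, 2m-1\}$ as the only integers in $[m, 2m-1]$ that are not single generators; I must check these four values cannot be written as a sum of two generators (the smallest such sum is $2m \ge 2m$, already exceeding $2m-1$), which shows they are gaps.

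Next I would show that everything from $2m$ onward lies in $S$. Sums of two generators range over $[2m, 4m-10]$, and since the generators form a full interval of length $m-4 \ge 5$, these two-fold sums cover the entire interval $[2m, 4m-10]$ with no gaps. More generally, $j$-fold sums cover $[jm, j(2m-5)]$, and because consecutive blocks overlap (one checks $(j+1)m \le j(2m-5)+1$ for the relevant range, using $m \ge 9$), the union of all these blocks is the full ray $[2m, \infty)$. This establishes that $\G(S) = \{1, 2, \ldots, m-1\} \cup \{2m-4, 2m-3, 2m-2, 2m-1\}$, and in particular $\F(S) = 2m-1$.

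For the pseudo-Frobenius numbers, I would verify directly from the definition $\PF(S) = \{P \in \G(S) : P + s \in S \text{ for all } s \in S \setminus\{0\}\}$. The four candidates $2m-4, 2m-3, 2m-2, 2m-1$ each satisfy $P + m \ge 3m-4 \ge 2m$, so adding any generator (or any larger element of $S$) lands in the ray $[2m, \infty) \subseteq S$; hence all four are pseudo-Frobenius. Conversely, for a small gap $g \in \{1, \ldots, m-1\}$, I would exhibit a generator $n$ with $g + n$ still a gap: taking $n = 2m-5$ gives $g + n \in \{2m-4, \ldots, 3m-6\}$, and for small enough $g$ this lands in $\{2m-4, 2m-3, 2m-2, 2m-1\} \subseteq \G(S)$, so such $g$ is not pseudo-Frobenius. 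One must handle the full range of small gaps, possibly choosing the generator depending on $g$, to confirm no $g \le m-1$ is pseudo-Frobenius.

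The main obstacle is the bookkeeping in the second step: confirming that the overlapping blocks $[jm, j(2m-5)]$ genuinely tile the ray $[2m, \infty)$ without leaving gaps. This hinges on the inequality $(j+1)m \le j(2m-5) + 1$, equivalently $jm \ge 5j + m - 1$, which holds once $j$ is large enough relative to $m$; the delicate case is the transition from one block to the next at small $j$, where the hypothesis $m \ge 9$ (equivalently, at least five generators) is exactly what guarantees the overlap. I would isolate this inequality as the crux and verify it cleanly, after which the rest of the computation is routine interval arithmetic.
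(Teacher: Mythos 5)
Your proposal is correct and takes essentially the same route as the paper: both identify the gaps below $2m$ by noting that two-fold sums of generators start at $2m$, then cover the ray $[2m,\infty)$ by sums of generators (the paper extracts the $m$ consecutive elements $2m,\ldots,3m-1$ from the two-fold sums and adds multiples of $m$, while you tile with overlapping $j$-fold blocks $[jm,\,j(2m-5)]$; both covering arguments reduce to the same critical inequality $m\ge 9$), and finally read $\PF(S)$ off the description of $\G(S)$. The one step you leave open, that no small gap $g\le m-1$ is pseudo-Frobenius, closes exactly the way you suggest, by choosing the generator according to $g$: take $s=2m-4-g$ (a generator precisely when $g\le m-4$) or $s=m$ (when $g\in\{m-3,m-2,m-1\}$), so that $g+s$ lands in $\{2m-4,\ldots,2m-1\}\subseteq\G(S)$; the paper leaves this verification to the reader as well.
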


\begin{proof}
The smallest element of $S$ that is not one of its minimal generators is $2m$, so it is clear that $\{1,2,\ldots, m-1, 2m-4,2m-3,2m-2,2m-1\} \subseteq \G(S)$.
Since $m\geq 9$, we have $3m-1\leq (2m-5)+(2m-5)$. Adding pairs of minimal generators shows that $S$ contains $2m, 2m+1,\ldots, 3m-5, 3m-4, 3m-3, 3m-2, 3m-1$.  Thus, $S$ contains $m$ consecutive positive integers. Adding multiples of $m$ shows that $S$ contains all positive integers larger than $2m$.  Therefore, 
\[
\G(S) = \{1,2,\ldots, m-1, 2m-4,2m-3,2m-2,2m-1\}.
\] 
Once we know $\G(S)$ it is easy to determine $\PF(S)$.
\end{proof}  

Before proving Theorem \ref{thm:inf_non_lam} we introduce some terminology related to integer partitions.
\begin{definition}
Let $\lambda = (\lambda_1,\ldots, \lambda_\ell)$ be a partition.  The conjugate $\tilde{\lambda}$ of $\lambda$ is $(\lambda_1', \ldots, \lambda_r')$ where $\lambda_i' = \#\{j\colon \lambda_j \ge i\}$. A partition $\lambda$ is \emph{self-conjugate} if $\lambda = \tilde{\lambda}$.
\end{definition}
It is easy to see that the Young diagram of $\tilde{\lambda}$ has columns of length $\lambda_1, \lambda_2,\ldots, \lambda_\ell$.  It is clear that $|\lambda| = |\tilde{\lambda}|$, and moreover that the multiset of hook lengths of $\lambda$ is the same as the multiset of hook lengths of $\tilde{\lambda}$.

We recall a result about the relationship between the enumeration of $T$ and the enumeration of $T^*$.
\begin{prop}\cite[Proposition 12]{CONSTANTIN201799}\label{prop:enum_Tdual}
For any numerical set $T$, $\widetilde{\lambda(T)}=~\lambda(T^*)$, and therefore $\A(T)=\A(T^*)$.
\end{prop}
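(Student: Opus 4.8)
The plan is to realize conjugation of partitions directly on the lattice walks that define the enumeration map, and to read off that the walk of the conjugate of $\lambda(T)$ is exactly the walk of $\lambda(T^*)$. First I would record the basic features of $T^*$. Writing $F = \F(T)$, the definition $T^* = \{x \in \Z : F - x \notin T\}$ gives at once that $0 \in T^*$ (since $F \notin T$), that $T^*$ contains no negative integer and every sufficiently large integer (since $T$ contains all integers exceeding $F$), and that $F$ is again the largest gap; thus $T^*$ is a numerical set with $\F(T^*) = F$. The elementary equivalence I will use repeatedly is that, for $0 \le k \le F$, one has $k \in T^*$ if and only if $F - k \notin T$.

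Next I would encode the two profiles as step sequences. For $n = 0, 1, \ldots, F$ the walk defining $\lambda(T)$ takes a horizontal step when $n \in T$ and a vertical step when $n \in \G(T)$; past index $F$ the walk is horizontal forever and is discarded. The geometric input is the standard fact that conjugating a partition (reflecting its Young diagram across the main diagonal) transforms its profile by reversing the order of the steps and interchanging horizontal and vertical ones. Applying this to $\lambda(T)$, the conjugate walk's step at index $k$ (for $0 \le k \le F$) is the interchange of the $T$-walk's step at index $F - k$: it is horizontal precisely when the $T$-step at $F-k$ is vertical, that is, when $F - k \in \G(T)$, equivalently $F - k \notin T$. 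By the equivalence above this happens exactly when $k \in T^*$. Hence the conjugate walk takes a horizontal step at index $k$ if and only if $k \in T^*$, which is by definition the walk of $\lambda(T^*)$; since both relevant portions run over $0 \le k \le F = \F(T^*)$, the two partitions coincide, giving $\widetilde{\lambda(T)} = \lambda(T^*)$.

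The consequence $\A(T) = \A(T^*)$ then follows formally. Conjugate partitions have the same multiset, hence the same set, of hook lengths, so $\Hook(\lambda(T)) = \Hook(\widetilde{\lambda(T)}) = \Hook(\lambda(T^*))$; since $\Hook(\lambda(U)) = \N \setminus \A(U)$ for every numerical set $U$, taking complements yields $\A(T) = \A(T^*)$.

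I expect the main obstacle to be making the reverse-and-swap description of conjugation precise and correctly aligned with the indexing: in particular, confirming that the finite relevant portions of both walks have equal length because $\F(T^*) = \F(T)$, and that reversal (rather than an in-place swap, which would fail to keep the origin in $T^*$) is the correct operation. If one prefers to bypass the geometric reflection, the same identity can be verified purely set-theoretically: the columns of $\lambda(T)$ are indexed by $u \in T$ with $u < F$ and have lengths $\#\{x \in \G(T) : x > u\}$, while the parts of $\lambda(T^*)$ are indexed by the gaps $h = F - u \in \G(T^*)$, and a direct count using $t \in T^* \iff F - t \notin T$ shows the part for $h = F - u$ also equals $\#\{x \in \G(T) : x > u\}$; the bijection $u \mapsto F - u$ then matches the column lengths of $\lambda(T)$ with the parts of $\lambda(T^*)$, which is exactly the statement $\widetilde{\lambda(T)} = \lambda(T^*)$.
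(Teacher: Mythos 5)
Your proof is correct. Note that the paper itself gives no proof of this proposition---it is quoted directly from the cited reference \cite[Proposition 12]{CONSTANTIN201799}---so there is no in-paper argument to compare against; your write-up is a valid self-contained substitute. Both of your routes work: the reverse-and-swap description of how conjugation acts on the boundary walk is the standard one and you align the indices correctly (the key points being $\F(T^*)=\F(T)$ and that the step at index $k$ of the conjugate walk matches the swapped step at index $F-k$), and your fallback set-theoretic count, matching the column of $\lambda(T)$ over $u\in T$, $u<F$, with the row of $\lambda(T^*)$ at the gap $F-u$, is complete on its own and avoids having to formalize the reflection. The deduction of $\A(T)=\A(T^*)$ is also sound and not circular: it uses conjugation-invariance of the hook multiset together with the identity $\Hook(\lambda(U))=\N\setminus\A(U)$, which is an independent result (\cite[Corollary 1]{KeithNath}, also \cite[Proposition 4]{CONSTANTIN201799}) that the paper itself invokes when it writes $\mathcal{P}(S)=\{\lambda(T)\colon \A(T)=S\}$.
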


Theorem \ref{thm:inf_non_lam} follows from the following more precise result.
\begin{theorem}\label{thm:inf_non_lam_sizes}
Let $m \ge 9$ and $S = \langle m, m+1,\ldots, 2m-5\rangle$.  Then we have  $|\lambda(S)| = 5m - 13$, $\Pa(S) = 6$, and 
\[ 
\{|\lambda(T)|: T \in \mathcal{P}(S)\} =
\{4m-5, 5m-13, 5m-7\}.
\]  
That is, $S$ is not $\lambda$-minimal. In fact, we have  
\[
\lim_{m\rightarrow \infty} \left(|\lambda(S)| - \min\{|\lambda(T)| \colon T \in \mathcal{P}(S)\}\right) = \infty
\]
and 
\[
\lim_{m\rightarrow\infty}\frac{\min\{|\lambda(T)|\colon T \in \mathcal{P}(S)\}}{|\lambda(S)|} = \frac{4}{5}.
\]
\end{theorem}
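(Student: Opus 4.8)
The plan is to work entirely in terms of the void poset $(\M(S),\preccurlyeq)$ and the characterization of associated numerical sets in Theorem \ref{thm:classify_num_set}. First I would record the basic data from Lemma \ref{Frob}: $\F(S)=2m-1$, $\G(S)=\{1,\ldots,m-1\}\cup\{2m-4,2m-3,2m-2,2m-1\}$, and $\PF(S)=\{2m-4,2m-3,2m-2,2m-1\}$. Substituting $\G(S)$ into \eqref{eqn: formlua size partition} yields $|\lambda(S)|$ at once: each of the $m-1$ small gaps $1,\ldots,m-1$ has only $0$ below it in $S$, while each of the four large gaps has exactly the $m-3$ elements $0,m,m+1,\ldots,2m-5$ below it, so $|\lambda(S)|=(m-1)+4(m-3)=5m-13$.

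Next I would compute the void. Checking which differences $\F(S)-x$ remain gaps shows $\M(S)=\{1,2,3,2m-4,2m-3,2m-2\}$, with minimal elements $\{1,2,3\}$ and maximal elements $\{2m-4,2m-3,2m-2\}$, consistent with Proposition \ref{prop:max_min_elements} since $\F(S)-P$ runs over $\{1,2,3\}$ as $P$ runs over $\PF(S)\setminus\{\F(S)\}$. The cover relations come from testing which differences lie in $S$: one finds $1\preccurlyeq 2m-4$; $\ 2\preccurlyeq 2m-4,\,2m-3$; and $3\preccurlyeq 2m-4,\,2m-3,\,2m-2$. This six-element poset is the entire combinatorial engine of the problem.

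Then comes the heart of the argument: enumerating the order ideals $I$ of this poset and deciding, via Theorem \ref{thm:classify_num_set}, which give numerical sets associated to $S$. Since all three relevant pseudo-Frobenius numbers $2m-4,2m-3,2m-2$ are special gaps (their doubles lie in $S$), the escape clause $2P\notin S$ never applies, and for each $P\in I\cap\PF(S)$ I must verify either $\F(S)-P\in I$ or the existence of a satisfied Frobenius triangle. I would first list all Frobenius triangles — these are $(2m-4,1,2)$, $(2m-4,2,1)$, and $(2m-3,1,1)$, with none having $P=2m-2$ — and then run through the twenty order ideals. This bookkeeping is the main obstacle: it must be done carefully enough to be convincing, and the conclusion is that exactly six order ideals qualify, namely $\emptyset$, $\{1,2m-4\}$, $\{1,2m-4,2m-2\}$, $\{2,2m-4,2m-3\}$, $\{1,2,2m-4,2m-3\}$, and $\M(S)$, giving $\Pa(S)=6$.

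Finally I would compute $|\lambda(T)|$ for each of the six $T=S\cup I$ using Lemma \ref{set_counting}, writing $|\lambda(T)|=|\lambda(S)|+|A|-|B|$ and counting $|A|$ and $|B|$ directly from $\G(T)=\G(S)\setminus I$. To halve the work I would invoke duality: by Proposition \ref{prop:enum_Tdual}, $|\lambda(T^*)|=|\lambda(T)|$, and the operation $I\mapsto\{x\in\M(S)\colon \F(S)-x\notin I\}$ pairs the six ideals as $(\emptyset,\M(S))$, $(\{1,2m-4\},\{1,2,2m-4,2m-3\})$, and $(\{1,2m-4,2m-2\},\{2,2m-4,2m-3\})$, so it suffices to compute one size per pair. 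The results are $5m-13$, $5m-7$, and $4m-5$ respectively, giving the claimed size set $\{4m-5,5m-13,5m-7\}$. Since $4m-5<5m-13$ exactly when $m>8$, the minimum is $4m-5<|\lambda(S)|$ for all $m\ge 9$, so $S$ is not $\lambda$-minimal; the two limits then follow from $|\lambda(S)|-\min=(5m-13)-(4m-5)=m-8\to\infty$ and $\frac{4m-5}{5m-13}\to\frac45$.
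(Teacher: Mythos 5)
Your proposal is correct and follows essentially the same route as the paper: both determine $\mathcal{P}(S)$ by finding which order ideals of the six-element void poset $\M(S)=\{1,2,3,2m-4,2m-3,2m-2\}$ satisfy the conditions of Theorem \ref{thm:classify_num_set}, arrive at the same six ideals paired by duality/conjugation, and obtain the three sizes $5m-13$, $5m-7$, $4m-5$. The only difference is bookkeeping: you compute the sizes via Lemma \ref{set_counting} together with the dual-ideal operation of Proposition \ref{prop:dual_ideal} (and you make the order-ideal screening more explicit than the paper does), whereas the paper reads the sizes off explicit Young diagrams drawn in conjugate pairs.
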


\begin{proof}
The description of $\G(S)$ from Lemma \ref{Frob} implies that 
    \[
    \M(S)=\{1,2,3,2m-4,2m-3,2m-2\}.
    \]

Theorem \ref{thm:classify_num_set} implies that the only order ideals $I$ of $(\M(S),\preccurlyeq)$ that give numerical sets associated to $S$ are
    \[
    \varnothing,\ \M(S), \ \{1,2m-4,2m-2\},\ \{2,2m-4,2m-3\},\ \{1,2m-4\},\ \{1,2,2m-4,2m-3\}.
    \]
    For each of these order ideals we consider $T = S\cup I$ and construct the Young diagram of $\lambda(T)$.  We will see that these order ideals come in conjugate pairs.

    The first pair of order ideals is $\varnothing$ and $\M(S)$.  The Young diagrams of the numerical sets corresponding to these order ideals are:
    \begin{center}
        \begin{tikzpicture}[scale=0.8]
        
            \coordinate (A) at (0,0);
            \coordinate (B) at (0,2);
            \coordinate (C) at (0,3.5);
            \coordinate (D) at (.5,0);
            \coordinate (E) at (.5,2);
            \coordinate (F) at (3,2);
            \coordinate (G) at (3,3.5);
            \coordinate (H) at (0.23,1);

            \draw (A) -- (C);
            \draw (A) -- (B);
            \draw (A) -- (D) node[midway, below] {0};
            \draw (D) -- (E);
            \draw (B) -- (E);
            \draw (B) -- (F);
            \draw (F) -- (G);
            \draw (G) -- (C) node[midway, above] {$S$};

            \node[scale=0.8] at (1.5,2.75) {$4m-12$};
            \node[scale=0.8] at (0.7,1.8) {$m$};
            \node[scale=0.8] at (2.7,1.8) {$2m-5$};
            \node[scale=0.8] at (H) {\rotatebox{90}{$m-1$}};

            \coordinate (I) at (4,2);
            \coordinate (J) at (6,2);

            \draw [>=latex, <->, line width=2pt, scale=2] (I) -- (J);

            \coordinate (K) at (10.5,3.5);
            \coordinate (L) at (8.5,3.5);
            \coordinate (M) at (7,3.5);
            \coordinate (N) at (10.5,3);
            \coordinate (O) at (8.5,3);
            \coordinate (P) at (8.5,0.5);
            \coordinate (Q) at (7,0.5);
            \coordinate (R) at (9.5,3.27);

            \draw (K) -- (M) node[midway, above] {$S\cup \M(S)$};
            \draw (K) -- (L);
            \draw (K) -- (N);
            \draw (N) -- (O);
            \draw (L) -- (O);
            \draw (L) -- (P);
            \draw (P) -- (Q);
            \draw (Q) -- (M);

            \node[scale=0.8] at (7.75,2) {$4m-12$};
            \node[scale=0.8] at (7.2,0.25) {0};
            \node[scale=0.8] at (8.7,0.75) {4};
            \node[scale=0.8] at (8.8, 2.8) {$m$};
            \node[scale=0.8] at (10.5,2.8) {$2m-2$};
            \node[scale=0.8] at (11.2,3.25) {$2m-1$};
            \node[scale=0.8] at (9.5,3.25) {$m-1$};

        \end{tikzpicture}
    \end{center}
\noindent We compute $|\lambda(S)|=|\lambda(S\cup \M(S))|=5m-13.$ 

The next pair of order ideals is $\set{1, 2m-4, 2m-2}$ and $\set{2, 2m-4, 2m-3}$.  The Young diagrams of the numerical sets corresponding to these order ideals are:
    \begin{center}
        \begin{tikzpicture}[scale=0.8]
        
            \coordinate (A) at (0,0);
            \coordinate (B) at (0,2);
            \coordinate (C) at (0,3.5);
            \coordinate (D) at (.8,0);
            \coordinate (E) at (.8,2);
            \coordinate (F) at (3,2);
            \coordinate (G) at (3,3.5);
            \coordinate (H) at (0.4,1);
            \coordinate (1) at (3,2.75);
            \coordinate (2) at (3.7,2.75);
            \coordinate (3) at (3.7,3.5);

            \draw (A) -- (C);
            \draw (A) -- (B);
            \draw (A) -- (D);
            \draw (D) -- (E);
            \draw (B) -- (E);
            \draw (B) -- (F);
            \draw (F) -- (G);
            \draw (G) -- (C) node[midway, above,scale=0.8] {$S\cup \{1,2m-4,2m-2\}$};
            \draw (1) -- (2) -- (3) -- (C);

            \node[scale=0.8] at (1.5,2.75) {$2m-2$};
            \node[scale=0.8] at (1,1.8) {$m$};
            \node[scale=0.8] at (2.7,1.8) {$2m-4$};
            \node[scale=0.8] at (H) {\rotatebox{90}{$2m-4$}};
            \node[scale=0.8] at (.2,-.2) {0};
            \node[scale=0.8] at (1,.2) {2};

            \coordinate (I) at (4,2);
            \coordinate (J) at (6,2);

            \draw [>=latex, <->, line width=2pt, scale=2] (I) -- (J);

            \coordinate (K) at (10.5,3.5);
            \coordinate (L) at (8.5,3.5);
            \coordinate (M) at (7,3.5);
            \coordinate (N) at (10.5,2.4);
            \coordinate (O) at (8.5,2.4);
            \coordinate (P) at (8.5,0.5);
            \coordinate (Q) at (7,0.5);
            \coordinate (R) at (9.5,3.27);
            \coordinate (11) at (7,-.3);
            \coordinate (22) at (7.75,-.3);
            \coordinate (33) at (7.75,0.5);

            \draw (K) -- (M) node[midway, above, scale=0.8] {$S\cup \{2,2m-4,2m-3\}$};
            \draw (K) -- (L);
            \draw (K) -- (N);
            \draw (N) -- (O);
            \draw (L) -- (O);
            \draw (L) -- (P);
            \draw (P) -- (Q);
            \draw (Q) -- (M);
            \draw (11) -- (Q);
            \draw (11) -- (22) -- (33);

            \node[scale=0.8] at (7.75,2) {$2m-2$};
            \node[scale=0.8] at (7.35,-.5) {0};
            \node[scale=0.8] at (8.15,.25) {2};
            \node[scale=0.8] at (8.7,0.75) {3};
            \node[scale=0.8] at (8.75, 2.2) {$m$};
            \node[scale=0.8] at (10.5,2.2) {$2m-3$};
            \node[scale=0.8] at (11.2,3.25) {$2m-1$};
            \node[scale=0.8] at (9.5,2.95) {$2m-4$};

        \end{tikzpicture}
    \end{center}
We compute $|\lambda(S\cup \{1,2m-4,2m-2\})|=|\lambda(S\cup\{2,2m-4,2m-3\})|=4m-5$. 

Lastly, we consider the pair of order ideals $\set{1, 2m-4}$ and $\set{1, 2, 2m-4, 2m-3}$.  The Young diagrams of the numerical sets corresponding to these order ideals are:
    \begin{center}
        \begin{tikzpicture}[scale=0.8]
        
            \coordinate (A) at (0,0);
            \coordinate (B) at (0,2);
            \coordinate (C) at (0,3.5);
            \coordinate (D) at (1,0);
            \coordinate (E) at (1,2);
            \coordinate (F) at (3,2);
            \coordinate (G) at (3,3.5);
            \coordinate (H) at (0.23,1);

            \draw (A) -- (C);
            \draw (A) -- (B);
            \draw (A) -- (D);
            \draw (D) -- (E);
            \draw (B) -- (E);
            \draw (B) -- (F);
            \draw (F) -- (G);
            \draw (G) -- (C) node[midway, above, scale=0.8] {$S\cup\{1,2m-4\}$};

            \node[scale=0.8] at (1.5,2.75) {$3m-3$};
            \node[scale=0.8] at (1.3,1.8) {$m$};
            \node[scale=0.8] at (2.7,1.8) {$2m-4$};
            \node[scale=0.8] at (.5,1) {\rotatebox{90}{$2m-4$}};
            \node[scale=0.8] at (.2,-.2) {0};
            \node[scale=0.8] at (1.2,.23) {2};

            \coordinate (I) at (4,2);
            \coordinate (J) at (6,2);

            \draw [>=latex, <->, line width=2pt, scale=2] (I) -- (J);

            \coordinate (K) at (10.5,3.5);
            \coordinate (L) at (8.5,3.5);
            \coordinate (M) at (7,3.5);
            \coordinate (N) at (10.5,2.5);
            \coordinate (O) at (8.5,2.5);
            \coordinate (P) at (8.5,0.5);
            \coordinate (Q) at (7,0.5);
            \coordinate (R) at (9.5,3.27);

            \draw (K) -- (M) node[midway, above, scale=0.8] {$S\cup\{1,2,2m-4,2m-3\}$};
            \draw (K) -- (L);
            \draw (K) -- (N);
            \draw (N) -- (O);
            \draw (L) -- (O);
            \draw (L) -- (P);
            \draw (P) -- (Q);
            \draw (Q) -- (M);

            \node[scale=0.8] at (7.75,2) {$3m-3$};
            \node[scale=0.8] at (7.2,0.25) {0};
            \node[scale=0.8] at (8.7,0.75) {3};
            \node[scale=0.8] at (8.8, 2.3) {$m$};
            \node[scale=0.8] at (10.5,2.3) {$2m-3$};
            \node[scale=0.8] at (11.2,3.25) {$2m-1$};
            \node[scale=0.8] at (9.5,3) {$2m-4$};

        \end{tikzpicture}
    \end{center}
We compute $|\lambda(S\cup \{1,2m-4\})|=|\lambda(S\cup\{1,2,2m-4,2m-3\})|= 5m-7$. The final statement of the theorem follows by taking a limit as $m \rightarrow \infty$.
\end{proof}

The semigroup $S = \langle 9,10,11,12,13\rangle$ from Example \ref{ex:non_lam} also generalizes in a different direction to give a second infinite family of numerical semigroups that are not $\lambda$-minimal.
\begin{theorem}\label{thm:2k1_3k1}
Let $k \ge 4$ and $S = \langle 2k+1,2k+2\dots,3k+1\rangle$.  Then $|\lambda(S)|=k^2+4k$.

Let $l$ be a divisor of $k$.  Consider 
\[
T=S\cup \{1,2,\dots,l-1\}\cup \{3k+1<x\leq4k+1 \colon x\not\equiv 1\pmod{l}\}.
\] 
Then $\A(T) = S$ and 
\[
|\lambda(T)|=\frac{3k^2l-k^2+4kl^3+3kl^2+kl-2l^4+2l^3}{2l^2}.
\]
\end{theorem}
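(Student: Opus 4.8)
The plan is to first read off the arithmetic of $S$, then apply the classification in Theorem \ref{thm:classify_num_set} to prove $\A(T) = S$, and finally compute both sizes from equation \eqref{eqn: formlua size partition} and Lemma \ref{set_counting}. Since the multiplicity of $S$ is $2k+1$ and the generators are the $k+1$ consecutive integers $2k+1,\ldots,3k+1$, sums of two generators fill $[4k+2,6k+2]$, which is a block of $2k+1$ consecutive integers and hence, with multiples of $2k+1$, covers every integer exceeding $4k+1$. This gives
\[
\G(S) = \{1,2,\ldots,2k\} \cup \{3k+2,3k+3,\ldots,4k+1\}, \qquad \F(S) = 4k+1.
\]
The value $|\lambda(S)| = k^2+4k$ then follows from \eqref{eqn: formlua size partition}: each of the $2k$ gaps below $2k+1$ lies above only $0 \in S$, while each of the $k$ gaps in $\{3k+2,\ldots,4k+1\}$ lies above the $k+2$ elements $\{0\}\cup\{2k+1,\ldots,3k+1\}$, so $|\lambda(S)| = 2k + k(k+2) = k^2+4k$.

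Next I would describe the void poset. From the definition of $\M(S)$ one computes $\M(S) = \{1,\ldots,k-1\}\cup\{3k+2,\ldots,4k\}$, and by Proposition \ref{prop:max_min_elements} its maximal elements are $\{3k+2,\ldots,4k\}$, so $\PF(S) = \{3k+2,\ldots,4k+1\}$ and $\type(S)=k$. There are no order relations inside either block (differences lie in $\{1,\ldots,k-2\}$, which are gaps), and for small $x\in\{1,\ldots,k-1\}$ and large $y\in\{3k+2,\ldots,4k\}$ one has $x\preccurlyeq y$ exactly when $y-x\le 3k+1$. Writing $I := T\setminus S = \{1,\ldots,l-1\}\cup\{3k+2\le x\le 4k : x\not\equiv 1\!\!\pmod l\}$, note that $l\mid k$ forces $4k+1\equiv 1\pmod l$, so the value $4k+1$ is automatically omitted and $I\subseteq\M(S)$ (using $l\le k$ for the low block).

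Then I would verify the two conditions of Theorem \ref{thm:classify_num_set}. That $I$ is an order ideal uses $3k\equiv 0\pmod l$: any large element above some $x\le l-1$ is a $y$ with $y-3k\in\{2,\ldots,l\}$, none of which is $\equiv 1\pmod l$, so $I$ is upward closed. For condition (2), the elements of $I\cap\PF(S)$ are the $P\in\{3k+2,\ldots,4k\}$ with $P\not\equiv 1\pmod l$; here $2P>\F(S)$, so $2P\in S$ and the first option never applies. If $\F(S)-P\le l-1$ then $\F(S)-P\in I$. Otherwise $s:=\F(S)-P\in\{l,\ldots,k-1\}$, and the hypothesis $P\not\equiv 1\pmod l$ is exactly what guarantees $s\not\equiv 0\pmod l$; setting $x = (s\bmod l)\in\{1,\ldots,l-1\}$ and $y = s-x\equiv 0\pmod l$, the triple $(P,x,y)$ is a Frobenius triangle with $P,x\in I$ and $\F(S)-y\notin I$ (because $\F(S)-y\equiv 1\pmod l$), so $I$ satisfies it. Theorem \ref{thm:classify_num_set} then yields $\A(T)=S$. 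I expect \emph{this triangle construction to be the main obstacle}, since it is where the divisibility $l\mid k$ and the residue condition on $P$ must be made to cooperate; the rest of the argument is bookkeeping.

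Finally I would compute $|\lambda(T)|$ via Lemma \ref{set_counting}, reducing to the counts $A=\{(i,h):i\in I,\ h\in\G(T),\ i<h\}$ and $B=\{(s,i):s\in S,\ i\in I,\ s<i\}$, where $\G(T)=\G(S)\setminus I = \{l,\ldots,2k\}\cup\{3k+2\le x\le 4k+1 : x\equiv 1\!\!\pmod l\}$. Splitting $I$ into its low block $\{1,\ldots,l-1\}$ and high block makes $|B|$ immediate: each low $i$ lies above only $0$, and each high $i$ lies above the $k+2$ small semigroup elements. For $|A|$, the low block pairs with every gap of $T$, whereas the high block pairs only with the high gaps $\equiv 1\pmod l$; indexing these as $3k+jl+1$ for $1\le j\le k/l$ produces the triangular sum $\sum_{j=1}^{k/l} j(l-1)$, which is the one genuinely calculation-heavy piece. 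Substituting $k/l$ and simplifying $|\lambda(S)|+|A|-|B|$ gives the stated rational expression; I would check the algebra against $(k,l)=(4,2)$, where it must return $|\lambda(T)|=31$ as in Example \ref{ex:non_lam_I}.
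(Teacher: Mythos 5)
Your proposal is correct and takes essentially the same route as the paper: the same description of $\G(S)$, $\M(S)$, and the void poset relations, the same verification that $I$ is an order ideal using $3k\equiv 0 \pmod{l}$, and the identical Frobenius-triangle construction $\bigl(P,\ s \bmod l,\ s-(s \bmod l)\bigr)$ with the same case split according to whether $\F(S)-P\le l-1$. The only cosmetic difference is that you compute $|\lambda(T)|$ via Lemma \ref{set_counting} as $|\lambda(S)|+|A|-|B|$, whereas the paper sums equation \eqref{eqn: formlua size partition} over $\G(T)$ directly; both reduce to the same triangular sum and yield the stated formula.
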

\begin{proof}
Note that $\G(S)=\{1,2,\dots,2k\}\cup \{3k+2,\dots,4k+1\}$. This implies $\F(S) = 4k+1$ and $\M(S)=\{1,2,\dots,k-1\}\cup \{3k+2,\dots,4k\}$.

By equation~\eqref{eqn: formlua size partition}, we have 
\[
|\lambda(S)| = \sum_{y\in \G(S)} \#\{s\in S\colon s<y\} = 2k\cdot 1 +k(k+2)=k^2+4k.
\]
Similarly, 
\begin{align*}
\G(T)&=\{l,l+1,\dots,2k\}\cup \{3k+1<y \le 4k+1\colon y\equiv 1\pmod{l}\}\\
&=\{l,l+1,\dots,2k\} \cup \{3k+1+ly_1\colon 1\leq y_1\leq k/l\}.
\end{align*}
So
\begin{align*}
|\lambda(T)| &= \sum_{y\in \G(T)} \#\{x\in T\colon x<y\}\\
&= (2k-(l-1))l +\sum_{y_1=1}^{k/l} (l+(k+1)+(l-1) y_1)\\
&=(2k-l+1)l+(k+1+l)\frac{k}{l}+(l-1) \frac{\frac{k}{l}(1+\frac{k}{l})}{2}.
\end{align*}
Putting this expression over a common denominator gives the formula for $|\lambda(T)|$ from the statement of the theorem.

We now show that $\A(T) = S$.  The relations in $(\M(S),\preccurlyeq)$ are $a\preccurlyeq 3k+1+b$, for $1\leq b\leq a\leq k-1$. Thus,
\[
I=\{1,2,\dots,l-1\}\cup \{3k+1<x\leq 4k+1\colon x\not\equiv 1\pmod{l}\}
\]
is an order ideal of the void poset. The pseudo-Frobenius numbers in $I$ are
\[
I\cap \PF(S)= \{3k+1<x\leq 4k+1 \colon x\not\equiv 1\pmod{l}\}.
\]
Given $x\in I\cap \PF(S)$, say $1-x\equiv a\pmod{l}$ for $1\leq a\leq l-1$ (since $x\not\equiv 1\pmod{l}$). Note that $a\in I$, and $x+a$ is the smallest positive integer larger than $x$ that is congruent to $1$ modulo $l$.  This implies $3k+1\leq x+a\leq 4k+1$. We see that $x+a\notin I$. 
\begin{itemize}
    \item If $x+a=4k+1$, then $a=\F(S)-x\in I$.
    \item If $x+a<4k+1$, then $x+a\in \M(S)$, so $(x,a, \F(S)-x-a)$ is a Frobenius triangle. Moreover, $a\in I$ and $x+a= \F(S)- (\F(S)-x-a)\notin I$. Therefore, $I$ satisfies the Frobenius triangle $(x,a, \F(S)-x-a)$.
\end{itemize}
By Theorem~\ref{thm:classify_num_set}, it follows that $\A(T)=S$.
\end{proof}

\begin{corollary}
Let $k=12l_1^2$ and $l=3l_1$ in Theorem \ref{thm:2k1_3k1}. Then we have
\[
|\lambda(T)| =2\sqrt{3} |\lambda(S)|^{3/4} +O(|\lambda(S)|^{1/2}).
\]
\end{corollary}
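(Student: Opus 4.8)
The plan is to substitute $k = 12l_1^2$ and $l = 3l_1$ into the two explicit formulas from Theorem~\ref{thm:2k1_3k1} and reduce the claim to an elementary asymptotic comparison in the single variable $l_1$. First I would record $|\lambda(S)| = k^2 + 4k = 144l_1^4 + 48l_1^2$, so that $|\lambda(S)|^{1/2} = 12l_1^2 + O(1)$ and, crucially, $|\lambda(S)|^{1/4}\sim 144^{1/4}l_1 = 2\sqrt{3}\,l_1$ (using $144^{1/4} = 12^{1/2} = 2\sqrt{3}$). This is the identity responsible for the constant $2\sqrt{3}$ appearing in the statement.

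Next I would simplify $|\lambda(T)|$. Substituting into the numerator $3k^2l - k^2 + 4kl^3 + 3kl^2 + kl - 2l^4 + 2l^3$, the two degree-five terms $3k^2l$ and $4kl^3$ each contribute $1296\,l_1^5$, the degree-four terms $-k^2 + 3kl^2 - 2l^4$ combine to $18\,l_1^4$, and the degree-three terms $kl + 2l^3$ give $90\,l_1^3$. Hence the numerator equals $18l_1^3(144l_1^2 + l_1 + 5)$, and dividing by the denominator $2l^2 = 18l_1^2$ collapses the rational expression to the exact cubic
\[
|\lambda(T)| = 144l_1^3 + l_1^2 + 5l_1.
\]
The one point requiring care here is verifying that the numerator is genuinely divisible by $18l_1^2$, i.e.\ that there is no constant or linear remainder; this follows because every monomial in $k$ and $l$ carries a factor of at least $l_1^3$ after substitution.

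I would then expand the main term by writing $|\lambda(S)|^{3/4} = (144l_1^4)^{3/4}\bigl(1 + \tfrac{1}{3l_1^2}\bigr)^{3/4}$ and applying the binomial series. Since $(144l_1^4)^{3/4} = (2\sqrt{3})^3 l_1^3 = 24\sqrt{3}\,l_1^3$ and $\bigl(1 + \tfrac{1}{3l_1^2}\bigr)^{3/4} = 1 + \tfrac{1}{4l_1^2} + O(l_1^{-4})$, this gives $|\lambda(S)|^{3/4} = 24\sqrt{3}\,l_1^3 + 6\sqrt{3}\,l_1 + O(l_1^{-1})$, and multiplying by $2\sqrt{3}$ yields $2\sqrt{3}\,|\lambda(S)|^{3/4} = 144l_1^3 + 36l_1 + O(l_1^{-1})$.

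Finally I would subtract. The leading terms $144l_1^3$ cancel, leaving
\[
|\lambda(T)| - 2\sqrt{3}\,|\lambda(S)|^{3/4} = l_1^2 - 31l_1 + O(l_1^{-1}),
\]
which is $O(l_1^2)$; since $|\lambda(S)|^{1/2} = 12l_1^2 + O(1)$, we have $l_1^2 = O(|\lambda(S)|^{1/2})$, so the difference is $O(|\lambda(S)|^{1/2})$, as claimed. I expect the only genuine obstacle to be bookkeeping in the fractional-power expansion: one must carry the binomial series far enough to confirm that the $O(l_1)$ correction to the main term does not interfere with the claimed error order, and must double-check that the cancellation of the $144l_1^3$ terms is exact so that the residual is of order $l_1^2$ rather than $l_1^3$. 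Everything else is routine polynomial arithmetic.
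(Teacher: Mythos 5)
Your proposal is correct and takes essentially the same route as the paper: substitute $k=12l_1^2$, $l=3l_1$ into the formulas of Theorem~\ref{thm:2k1_3k1} and compare asymptotics in $l_1$, the paper recording only $|\lambda(S)|=144l_1^4+O(l_1^2)$ and $|\lambda(T)|=144l_1^3+O(l_1^2)$ before saying the result follows. Your exact evaluation $|\lambda(T)|=144l_1^3+l_1^2+5l_1$ and the binomial expansion of $2\sqrt{3}\,|\lambda(S)|^{3/4}$ simply make explicit the bookkeeping the paper leaves to the reader, and all of your arithmetic checks out.
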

\begin{proof}

In this case $|\lambda(S)|=k^2+4k=144l_1^4+O(l_1^2)$ and $|\lambda(T)|=144l_1^3+O(l_1^2)$. The result follows.
\end{proof}

The following corollary is a more precise version of Theorem \ref{thm:2k1_3k1_intro} from the introduction.
\begin{corollary}
Let $k \ge 4$ and $S = \langle 2k+1, 2k+2,\ldots, 3k+1\rangle$.  

If $l \not\in \{1,k\}$, then the construction of Theorem \ref{thm:2k1_3k1} gives a numerical set $T$ with $\lambda(T) \in \mathcal{P}(S)$ and $|\lambda(T)| < |\lambda(S)|$.  Therefore, if $k$ is not prime, then $S$ is not $\lambda$-minimal.

Let $N$ be a positive integer.  For any $k$ with sufficiently many divisors, $S$ has the property that there are at least $N$ partitions in $\mathcal{P}(S)$ with size less than $|\lambda(S)|$.
\end{corollary}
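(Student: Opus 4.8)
The plan is to establish the three assertions in turn, with the first being the computational heart of the argument. For the first claim I would start from the two closed forms supplied by Theorem~\ref{thm:2k1_3k1}, namely $|\lambda(S)| = k^2 + 4k$ and the rational expression for $|\lambda(T)|$, and compute the difference $|\lambda(S)| - |\lambda(T)|$ directly. Clearing the denominator $2l^2$, the goal becomes to show that the integer
\[
2l^2\bigl(|\lambda(S)| - |\lambda(T)|\bigr) = 2l^2 k^2 + 8l^2 k - \bigl(3k^2 l - k^2 + 4kl^3 + 3kl^2 + kl - 2l^4 + 2l^3\bigr)
\]
is positive whenever $l$ is a proper divisor of $k$, that is, $l \mid k$ with $1 < l < k$.

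The key algebraic step is to reorganize this numerator as a polynomial in $k$ and factor. Grouping by powers of $k$, the coefficient of $k^2$ is $2l^2 - 3l + 1 = (2l-1)(l-1)$, the coefficient of $k$ is $-l(4l-1)(l-1)$, and the constant term is $2l^3(l-1)$. Thus the whole numerator factors as
\[
(l-1)\,Q(k,l), \qquad Q(k,l) = (2l-1)k^2 - l(4l-1)k + 2l^3 .
\]
Since $l \ge 2$ we have $l - 1 > 0$, so it suffices to prove $Q(k,l) > 0$. Viewing $Q$ as an upward-opening quadratic in $k$, its discriminant is $l^2(4l-1)^2 - 8l^3(2l-1) = l^2$, a perfect square, and its two roots are $k = l$ and $k = \tfrac{2l^2}{2l-1}$. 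Because $\tfrac{2l^2}{2l-1} = l + \tfrac{l}{2l-1} < l + 1$ for $l \ge 2$, both roots lie below $l+1$. A proper divisor satisfies $k \ge 2l > l + 1$, so $k$ exceeds the larger root and hence $Q(k,l) > 0$. This proves $|\lambda(T)| < |\lambda(S)|$, and the second claim follows at once: if $k$ is composite it has a divisor $l$ with $1 < l < k$, and since $\A(T) = S$ by Theorem~\ref{thm:2k1_3k1}, the set $T$ gives a partition in $\mathcal{P}(S)$ strictly smaller than $\lambda(S)$, so $S$ is not $\lambda$-minimal.

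For the final claim I would exhibit one such smaller partition for each proper divisor $l$ and argue that distinct divisors give distinct partitions. Each $l$ produces the order ideal $I_l$ whose intersection with $\{1,\ldots,k-1\}$ is exactly $\{1,\ldots,l-1\}$, a set of size $l-1$; since this size recovers $l$, the numerical sets $T_l = S \cup I_l$ are pairwise distinct, and because the enumeration map $T \mapsto \lambda(T)$ is a bijection, the partitions $\lambda(T_l)$ are pairwise distinct, each of size less than $|\lambda(S)|$ by the first claim. The number of proper divisors of $k$ is $d(k) - 2$, where $d(k)$ counts all divisors, so imposing the threshold $d(k) \ge N + 2$ guarantees at least $N$ such partitions; this is the precise meaning of ``sufficiently many divisors.''

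The main obstacle is recognizing the clean factorization in the second paragraph: the difference is an unenlightening expression that is quartic-looking in $l$ and quadratic in $k$, and the argument only becomes transparent once one spots that $(l-1)$ divides the numerator and that the remaining quadratic $Q(k,l)$ has discriminant exactly $l^2$. Everything else is bookkeeping, but locating these two factorizations, and then checking that both roots of $Q(\cdot,l)$ fall below $2l \le k$, is what makes the inequality provable uniformly rather than through case analysis.
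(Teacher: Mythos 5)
Your proposal is correct and takes essentially the same approach as the paper: compute $|\lambda(S)|-|\lambda(T)|$ from the formulas of Theorem \ref{thm:2k1_3k1}, extract the factor $(l-1)$, and show the remaining factor is positive, with your elaboration of the final claim (distinct proper divisors give distinct numerical sets, hence distinct partitions under the enumeration bijection) being exactly what the paper leaves implicit. The only difference is cosmetic: the paper substitutes $u=k/l$ to obtain the complete factorization $(l-1)(u-1)\bigl(lu-l-\tfrac{u}{2}\bigr)$, whose factors $(u-1)$ and $lu-l-\tfrac{u}{2}$ correspond precisely to your roots $k=l$ and $k=\tfrac{2l^2}{2l-1}$ of $Q(k,l)$, so your discriminant analysis finishes the proof in an equivalent, if slightly more laborious, way.
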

\begin{proof}
Let $\frac{k}{l} =u$, so $u>1$. Then,
\[
|\lambda(S)|-|\lambda(T)| = (l-1)(u-1)\Big(lu-l-\frac{u}{2}\Big).
\]
Since $l \not\in \{1,k\}$, we have $(l-1)\ge 1$ and $(u-1)\ge 1$.  We see that
\[
lu-l-\frac{u}{2}=\left(l-\frac{1}{2}\right)(u-1)-\frac{1}{2} \geq \frac{3}{2} \cdot 1-\frac{1}{2}=1.
\]

The second part of the statement follows directly from the first.
\end{proof}

\section{The Staircase Family}\label{sec:staircase}
We now turn our attention to a family of numerical semigroups that has received recent interest in relation to the Anti-Atom Problem. 

\begin{definition}[Staircase family of numerical semigroups]
    \label{def:staircase}
    Let $m$ and $k $ be positive integers with $m \ge 2$ and let $s$ be an integer satisfying $1 \le s \le m-1$. A numerical semigroup of the form 
    \[
    S_{m, k, s}=\{0,m,2m,...,km,km+s+1\rightarrow\}
    \]
    is said to be in a \emph{staircase family}. 
\end{definition}
In this section, we will show that every numerical semigroup of the form $S_{m,k,s}$ is $\lambda$-minimal. We begin by describing the pseudo-Frobenius numbers, void, and void poset of a numerical semigroup in the staircase family. When the semigroup $S$ is clear from context, we write $F$ instead of $\F(S)$.  
\begin{lemma}\label{lem:staircase}
Let $S = S_{m,k,s}$ be a numerical semigroup in a staircase family. Then 
\begin{enumerate}
    \item The pseudo-Frobenius elements of $S$ are precisely the elements of the set 
    $\PF(S) = \{(k-1)m+s+1,\ldots, (k-1)m +(m-1), km+1,\ldots, km+s\}.$
    In particular, $\type(S) = m -1$ and $\F(S) = km + s$.
    \item Suppose $x, y \in \M(S)$ with $x \le y$.  Then $x\preccurlyeq y$ in $(\M(S),\preccurlyeq)$ if and only if $m \mid (y-x)$.

    \item The void poset of $S,\ (\M(S),\preccurlyeq)$, consists of $m-2$ chains of elements $x_1\preccurlyeq x_2\preccurlyeq\cdots\preccurlyeq x_n$ where $n \in \{k,k+1\}$. In particular, $x_1, \dots x_n$ are positive elements of a nonzero congruence class modulo $m$ that does not contain $\F(S)$.
\end{enumerate}
\end{lemma}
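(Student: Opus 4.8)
The plan is to first pin down the gap set $\G(S)$ explicitly, since all three parts follow from a clear picture of $\G(S)$ together with one structural fact: because $0,m\in S$ and $S$ is closed under addition, \emph{every} nonnegative multiple of $m$ lies in $S$. From $S_{m,k,s}=\{0,m,2m,\ldots,km,km+s+1,\rightarrow\}$ one reads off
\[
\G(S)=\{x\in[1,km-1]\colon m\nmid x\}\cup\{km+1,\ldots,km+s\},
\]
so the largest gap is $km+s$ and $\F(S)=km+s$. Equivalently, the nonzero elements of $S$ are the positive multiples of $m$ together with all integers at least $km+s+1$; in particular a \emph{non}-multiple of $m$ lies in $S$ only if it is at least $km+s+1$. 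I would record these facts at the outset, as they drive everything that follows.

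For part (1), I would use the criterion that a gap $P$ is pseudo-Frobenius exactly when $P+s'\in S$ for every nonzero $s'\in S$, and argue that the only binding constraint is $s'=m$. On one hand, if $P+m\in S$ then $P+tm\in S$ for all $t\ge 1$ by closure; on the other hand, every generator of size at least $km+s+1$ satisfies $P+s'>\F(S)$ for any gap $P\ge 1$, so those constraints are automatic. Thus $P\in\PF(S)$ iff $P\in\G(S)$ and $P+m\in S$. Writing $P=qm+r$ and checking when $(q+1)m+r\in S$ then isolates the two advertised blocks: the top-row gaps $(k-1)m+r$ with $r>s$, and the overshoot block $km+1,\ldots,km+s$. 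Counting gives $|\PF(S)|=(m-1-s)+s=m-1$, and $\F(S)=km+s$ is recovered as the largest element.

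Part (2) is the hinge on which part (3) rests, but it is short. For the forward direction, if $x\preccurlyeq y$ with $x\le y$ then $y-x\in S$; since $x\ge 1$ and $y\le\F(S)=km+s$, we have $0\le y-x\le km+s-1<km+s+1$, so $y-x$ cannot be one of the non-multiples of $m$ contained in $S$ (those are all at least $km+s+1$), forcing $m\mid(y-x)$. The converse is immediate from the opening observation: if $m\mid(y-x)$ then $y-x$ is a nonnegative multiple of $m$, hence in $S$, so $x\preccurlyeq y$.

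For part (3), part (2) shows that two elements of $\M(S)$ are comparable iff they lie in the same residue class modulo $m$, so the void poset is automatically a disjoint union of chains indexed by the residues occurring in $\M(S)$. It remains to identify those residues and the chain lengths, which is the main bookkeeping step and the place I expect the most care to be needed. I would determine $\M(S)$ residue-by-residue using the reflection condition $\F(S)-x\in\G(S)$: residue $0$ is excluded since multiples of $m$ lie in $S$, and residue $s$ is excluded since then $\F(S)-x\equiv 0\pmod{m}$ is a multiple of $m$, hence in $S$. For each remaining residue $r$, I would list the gaps $x=qm+r$ and check that the reflected element $\F(S)-x=(k-q)m+(s-r)$ is again a gap; a direct check shows this holds for \emph{every} admissible $q$, so the whole residue class of gaps survives into $\M(S)$. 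Separating $r<s$ (where the overshoot block contributes an extra gap $km+r$, giving $k+1$ elements) from $r>s$ (giving $k$ elements) yields $s-1$ chains of length $k+1$ and $m-1-s$ chains of length $k$, for a total of $m-2$ chains with $n\in\{k,k+1\}$, as claimed. The only subtlety is tracking how $s-r$, reduced modulo $m$, lands relative to $s$, since this is exactly what governs whether a chain has length $k$ or $k+1$; handling the two cases $r<s$ and $r>s$ separately resolves it cleanly.
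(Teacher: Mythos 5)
Your proposal is correct, but it does not follow the paper's route on part (1). For parts (2) and (3) the two arguments coincide in substance: both rest on the single observation that every element of $S$ smaller than $\F(S)=km+s$ is a multiple of $m$, so that for $x,y\in\M(S)$ with $x\le y$ one has $y-x\in S$ if and only if $m\mid(y-x)$, whence the void poset splits into chains indexed by the nonzero residue classes other than that of $\F(S)$, of length $k+1$ or $k$ according to whether the residue is smaller or larger than $s$. The genuine divergence is part (1): the paper never argues from the definition of pseudo-Frobenius numbers. It determines the void poset first and then invokes Proposition \ref{prop:max_min_elements} (the maximal elements of $(\M(S),\preccurlyeq)$ are exactly $\PF(S)\setminus\{\F(S)\}$), so the pseudo-Frobenius numbers are simply read off as the tops of the chains. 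You instead prove (1) directly and independently of (2)--(3), by reducing the pseudo-Frobenius condition to the single constraint $P+m\in S$: the constraints from elements of $S$ exceeding $\F(S)$ are vacuous since $P+s'>\F(S)$, and closure under addition handles all higher multiples of $m$. Your route is more elementary and self-contained, at the cost of a short extra case analysis; the paper's is shorter given the machinery already quoted in Section \ref{sec:void}, and it produces $\PF(S)$ and the poset structure in one stroke. One simplification worth borrowing from the paper: to identify $\M(S)$, note that for $x\in\G(S)$ one has $0\le \F(S)-x<\F(S)$, so $\F(S)-x\in S$ if and only if $m\mid(\F(S)-x)$; this shows at once that $\M(S)$ consists of the gaps with $x\not\equiv s\pmod{m}$, replacing your residue-by-residue verification that the reflection of each gap is again a gap.
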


\begin{proof}
The formula for $\F(S)$ is clear from the description of the elements of $S$.  Furthermore,  $x \in \M(S)$ if and only if $F-x \notin S$. Since $F-x < F$, this is equivalent to saying that $F - x \not\equiv 0 \pmod{m}$. That is, every element of $\G(S)$ not congruent to $F$ modulo $m$ is an element of $\M(S)$. Furthermore, let $x,y\in \M(S)$ with $x \preccurlyeq y$.  We see that $y-x <F$, so $y - x \in S$ if and only if $x \equiv y \pmod{m}$. Considering the set of elements of $\M(S)$ in each residue class modulo $m$, we conclude that each set has either $k$ or $k+1$ elements. Using that fact that pseudo-Frobenius numbers not equal to $F$ are exactly the maximal elements of the void poset, we conclude they are indeed the elements described above. 
\end{proof}

Suppose that $S$ is a numerical semigroup and $T$ is a numerical set associated to $S$. We describe a way to divide the Young diagram of $\lambda(T)$ into sections. Let $m=\m(S)$, $F=\F(S)$, and $q=\q(S)$.
For any $b$ satisfying $0\leq b\leq q-1$, $bm$ is an element of $S$ with $bm<F$. In particular, $bm\in T$. We draw a vertical line in $\lambda(T)$ to the left of the step on the profile of $\lambda(T)$ corresponding to $bm$. 
For any $a$ satisfying $0\leq a\leq q-1$, $F-am\notin S$ and $F-am\notin \M(S)$. Therefore, $F-am\notin T$. We draw a horizontal line in $\lambda(T)$ above the step on the profile of $\lambda(T)$ corresponding to $F-am$.
This divides the Young diagram of $\lambda(T)$ into sections indexed by pairs $(a,b)$ satisfying $0\leq a,b\leq q-1$ and $a+b\leq q-1$. We let
\[
\lambda(T)_{a,b}=\left\{\Box(u,x) : u\in T, x\in \G(T), u<x, \left\lfloor \frac{u}{m}\right\rfloor=b, \left\lfloor\frac{F-x}{m}\right\rfloor=a\right\}.
\]

\begin{center}
\begin{tikzpicture}[scale=0.4, x=1cm,y=-1cm]
\draw (0,0) -- (0,12)--
(1,12) -- (1,11)--
(2,11) -- (2,10)--
(3,10) -- (3,9)--
(4,9) -- (4,8)--
(5,8) -- (5,7)--
(6,7) -- (6,6)--
(7,6) -- (7,5)--
(8,5) -- (8,4)--
(9,4) -- (9,3)--
(10,3) -- (10,2)--
(11,2) -- (11,1)--
(12,1) -- (12,0)--
(0,0);

\draw (0,3) -- (9,3);
\draw (0,6) -- (6,6);
\draw (0,9) -- (3,9);

\draw (4,0) -- (4,9);
\draw (7,0) -- (7,6);
\draw (10,0) -- (10,3);

\node[scale=0.8,color=blue] at (2,1) {$\mathbf{\lambda(T)_{0,0}}$};
\node[scale=0.8,color=blue] at (5.5,1) {$\mathbf{\lambda(T)_{0,1}}$};
\node[scale=0.8,color=blue] at (8.5,1) {$\mathbf{\lambda(T)_{0,2}}$};
\node[scale=0.65,color=blue] at (11,0.5) {$\mathbf{\lambda(T)_{0,3}}$};

\node[scale=0.8,color=blue] at (2,4) {$\mathbf{\lambda(T)_{1,0}}$};
\node[scale=0.8,color=blue] at (5.5,4) {$\mathbf{\lambda(T)_{1,1}}$};
\node[scale=0.65,color=blue] at (8,3.5) {$\mathbf{\lambda(T)_{1,2}}$};

\node[scale=0.8,color=blue] at (2,7) {$\mathbf{\lambda(T)_{2,0}}$};
\node[scale=0.65,color=blue] at (5,6.5) {$\mathbf{\lambda(T)_{2,1}}$};

\node[scale=0.65,color=blue] at (1.8,9.5) {$\mathbf{\lambda(T)_{3,0}}$};

\node[scale=0.8] at (0.5,12.35) {$0$};
\node[scale=0.8] at (4.5,8.3) {$m$};
\node[scale=0.8] at (7.6,5.4) {$2m$};
\node[scale=0.8] at (10.6,2.4) {$3m$};

\node[scale=0.8] at (12.35,0.5) {$F$};
\node[scale=0.8] at (10,3.5) {$F-m$};
\node[scale=0.8] at (7.2,6.5) {$F-2m$};
\node[scale=0.8] at (4.2,9.5) {$F-3m$};
\end{tikzpicture}
\end{center}

We denote the set of hook lengths that appear in the boxes of $\lambda(T)_{a,b}$ by
\[
\mathcal{H}(T)_{a,b}=\{|\hook(u,x)| : \Box(u,x)\in \lambda(T)_{a,b}\}.
\]
It is clear that $|\mathcal{H}(T)_{a,b}| \leq |\lambda(T)_{a,b}|$.

We can now give the proof of Theorem \ref{thm:staircase}.
We want to show that if $T$ is a numerical set associated to $S=S_{m,k,s}$, then $|\lambda(S)|\leq |\lambda(T)|$.
We do this by showing that for each section we have $|\lambda(S)_{a,b}| \leq |\mathcal{H}(T)_{a,b}|$.

\begin{proof}[Proof of Theorem \ref{thm:staircase}]
Let $S=S_{m,k,s}$, so $\m(S)=m$, $F = \F(S)=km+s$, and $\q(S)=k+1$. Note that each section of the Young diagram of $\lambda(S)$ is contained in a single column. We see that
    \[
|\lambda(S)_{a,b}| = 
\begin{cases}
m-1 & \text{ if } a+b < k,\\
s & \text{ if } a+b = k.
\end{cases}
    \]
    \begin{center}
    \begin{figure}
    \begin{tikzpicture}[scale=0.75, x=1cm, y=0.65cm]
        \coordinate (A) at (0,0);
        \coordinate (B) at (0,3/2);
        \coordinate (C) at (0,7/2);
        \coordinate (D) at (0,15/2);
        \coordinate (E) at (0,9.5);
        \coordinate (F) at (1/2,0);
        \coordinate (G) at (1/2,3/2);
        \coordinate (H) at (1/2,2);
        \coordinate (I) at (1,2);
        \coordinate (J) at (1,7/2);
        \coordinate (K) at (1,4);
        \coordinate (L) at (3/2,4);
        \coordinate (M) at (2,6);
        \coordinate (N) at (2,15/2);
        \coordinate (O) at (2,8);
        \coordinate (P) at (5/2,8);
        \coordinate (Q) at (5/2,9.5);
        \coordinate (R) at (1/2,9.5);
        \coordinate (S) at (1,9.5);
        \coordinate (T) at (2,9.5);
        \coordinate (U) at (0,4);
        \coordinate (V) at (0,6);
        \coordinate (W) at (1/2,4);
        \coordinate (X) at (1/2,6);
        \coordinate (Y) at (1,6);
        \coordinate (Z) at (1.5,6);
        \coordinate (z) at (1.5,9.5);
        \coordinate (aa) at (1/2,4);
        \coordinate (bb) at (1/2,6);

        \draw (A)--(U);
        \draw[dashed] (U)--(V);
        \draw (V)--(E);
        \draw (A)--(F);
        \draw (F)--(W);
        \draw (H)--(I);
        \draw (B)--(G);
        \draw (I)--(K);
        \draw (K)--(L);
        \draw (C)--(J);
        \draw[dashed] (L)--(M);
        \draw (M)--(O);
        \draw (N)--(D);
        \draw (O)--(P);
        \draw (P)--(Q);
        \draw (E)--(Q);
        \draw (X)--(R);
        \draw[dashed] (aa)--(bb);
        \draw (Y)--(S);
        \draw[dashed] (Y)--(K);
        \draw (O)--(T);
        \draw[dashed] (L)--(Z);
        \draw (Z)--(z);

        \node [scale=3/4] at (1/4,-0.25) {$0$};
        \node [scale=3/4] at (1.15,1.2) {$F-km$};
        \node [scale=3/4] at (0.78,1.77) {$m$};
        \node [scale=0.7] at (2.05,3.1) {$F-(k-1)m$};
        \node [scale=3/4] at (1.3,3.75) {$2m$};
        \node [scale=3/4] at (2.55,7.1) {$F-m$};
        \node [scale=3/4] at (2.3,7.75) {$km$};
        \node [scale=3/4] at (3.1,9.2) {$km+s$};
    \end{tikzpicture}
\caption{Young Diagram of $S_{m,k,s}$}
\end{figure}
    \end{center}

Let $T$ be a numerical set associated to $S$.
Our goal is to show that in each section $|\lambda(T)_{a,b}| \ge |\lambda(S)_{a,b}|$. Since $|\lambda(T)_{a,b}| \ge |\mathcal{H}(T)_{a,b}|$, it is enough to show that $|\mathcal{H}(T)_{a,b}| \ge |\lambda(S)_{a,b}|$.

In the case where $a+b<k$, we want to show that $|\mathcal{H}(T)_{a,b}|\geq m-1$. We will do this by showing that for each $\ell\in [1,m-1]$, $\mathcal{H}(T)_{a,b}$ contains a hook length congruent to $\ell$ modulo $m$.
Note that $\lambda(T)_{a,b}$ contains $\Box(u,x)$ for
\begin{align*}
u & \in [bm,(b+1)m-1]\cap T,
&
x & \in [F-am-(m-1),F-am]\cap \G(T).
\end{align*}

In the case where $a+b=k$, we want to show that $|\mathcal{H}(T)_{a,b}|\geq s$. We will do this by showing that for each $\ell \in [1,s],\ \lambda(T)_{a,b}$ contains a box with hook length congruent to $\ell$ modulo $m$.
Since $F-am=bm+s$, we have that $\lambda(T)_{a,b}$ contains $\Box(u,x)$, where $u<x$ and
\begin{align*}
u & \in  [bm,bm+s]\cap T,
&
x & \in [bm,bm+s]\cap \G(T).
\end{align*}

    Consider $\ell\in [1,m-1]$. We will show that $\lambda(T)_{a,b}$ contains a box with hook length congruent to $\ell$ modulo $m$. If $a+b=k$, then we include the additional requirement that $\ell \leq s$.
    Say $F-\ell\equiv \alpha \pmod{m}$ with $0\leq \alpha\leq m-1$ and $x_1=F-am-\alpha$. So $x_1\equiv \ell \pmod{m}$ and $x_1\in [F-am-(m-1),F-am]$.
    
    First, consider the case where $x_1\notin T$. The box $\Box(bm,x_1)$ has hook length congruent to $\ell$ modulo $m$.
    If $a+b<k$, then this box is clearly in $\lambda(T)_{a,b}$.  If $a+b=k$, then $\ell\leq s$, so $\alpha\leq s$. Therefore, $x_1\in [bm,bm+s]$ and we see that $\Box(bm,x_1)$ is in $\lambda(T)_{a,b}$.
    
    Next, consider the case where $x_1\in T$. Suppose $T = S \cup I$ for an order ideal $I$. Since $x_1 \equiv \ell \not\equiv 0 \pmod{m}$, we know that $x_1\notin S$. This means
    $x_1\in I\subseteq \M(S)$, so $x_1\not\equiv F\pmod{m}$, and hence $\alpha\neq 0$.
    Since $I$ is an order ideal, the description of $(\M(S),\preccurlyeq)$ from Lemma~\ref {lem:staircase} implies that $I$ contains the pseudo-Frobenius number of $S$ that is congruent to $x_1$ modulo $m$. We call this pseudo-Frobenius number $P$.  Then by Theorem \ref{thm:classify_num_set} we have two cases.
        \begin{enumerate}[leftmargin=*]
            \item Suppose $F-P\in T$. Proposition \ref{prop:max_min_elements} implies that $F-P$ is a minimal element of $\M(S)$ and Lemma \ref{lem:staircase} implies that $T$ contains all positive integers congruent to $F-P$ modulo $m$. Note that $bm+\alpha\equiv F-P\pmod{m}$ and the box $\Box(bm+\alpha,F-am)$ has hook length congruent to $\ell$ modulo $m$. If $a+b<k$, then this box is clearly in $\lambda(T)_{a,b}$. In the case where $a+b=k$, we have $0<\ell \leq s$, which implies $0\leq \alpha< s$, so $bm+\alpha\in [bm,bm+s]$. Moreover, $F-am=bm+s$, so $F-am\in [bm,bm+s]$ and $bm+\alpha<F-am$. Thus, $\Box(bm+\alpha,F-am)$ is in $\lambda(T)_{a,b}$.
            
            \item Suppose $I$ satisfies a Frobenius triangle $(P,y,z)$. We have $y\in T$, $F-z\notin T$, and $P+y+z=F$. Lemma \ref{lem:staircase} implies that $F-P<m$. Therefore, $y+z=F-P <m$. Since $y,z\in \M(S)$, Proposition \ref{prop:max_min_elements} implies that both $y$ and $z$ are minimal elements of $(\M(S),\preccurlyeq)$. Since $y\in I$ and $I$ is an order ideal, Lemma~\ref{lem:staircase} implies that $T$ contains all positive integers congruent to $y$ modulo $m$. Since $z$ is a minimal element of $(\M(S),\preccurlyeq)$, Proposition \ref{prop:max_min_elements} implies that $F-z$ is a maximal element of $(\M(S),\preccurlyeq)$. Since $F-z\notin I$ and $I$ is an order ideal, Lemma~\ref{lem:staircase} implies that $T$ does not contain any positive integer less than $\F(S)$ that is congruent to $F-z$ modulo $m$.
            
            The box $\Box(bm+y,F-am-z)$ has hook length $F-am-z-bm-y\equiv P\equiv \ell \pmod{m}$.
            If $a+b<k$, then this box is clearly in $\lambda(T)_{a,b}$. In the case where $a+b=k$, we know that $1 \le \ell \le s$, which implies $F-P\leq s$. Also, we are in the case $x_1\in T$, so $x_1\not\equiv F\pmod{m}$, that is, $\ell \neq s$. Recall that $F-am=bm+s$. 
            We see that $y+z=F-P< s$, so $bm+y,F-am-z\in [bm,bm+s]$. Moreover, $bm+y<F-am-z$ as $y+z<s$. Hence, $\Box(bm+y,F-am-z)$ is in $\lambda(T)_{a,b}$.
        \end{enumerate}
For sections with $a+b<k$, we have shown that 
\[
|\lambda(T)_{a,b}|\geq |\mathcal{H}(T)_{a,b}|\geq m-1=|\lambda(S)_{a,b}|.
\]
For sections with $a+b=k$, we have shown that 
\[
|\lambda(T)_{a,b}|\geq |\mathcal{H}(T)_{a,b}|\geq s=|\lambda(S)_{a,b}|.
\]
This completes the proof.
\end{proof}

\section{Families of Numerical Semigroups with Depth $2$}\label{sec:depth2}

In this section, we consider several families of numerical semigroups with depth~$2$ and show that they are $\lambda$-minimal.

\subsection{Depth 2 and Durfee Squares}

Our goal is to analyze numerical semigroups of depth 2 with a Durfee square of fixed size. 
We provide a sufficient condition on the large gaps of such numerical semigroups to ensure that they are $\lambda$-minimal.
We then show that if $S$ has depth $2$ and the Durfee square of $\lambda(S)$ has size at most $3$, this condition is always satisfied, which allows us to conclude that all such numerical semigroups $S$ are $\lambda$-minimal.

If the Durfee square of $\lambda(S)$ has size $n$, we will partition the Young diagram of $\lambda(S)$ into a union of $n$ hooks. If $T$ is any numerical set associated to $S$, then we find $n$ disjoint hooks of corresponding lengths in the Young diagram of $\lambda(T)$. 

Recall that each hook is uniquely determined by a pair $(a,x)$ satisfying $a\in T$, $x\in \G(T)$, and $a<x$. Moreover, two hooks $\hook(a_1,x_1)$ and $\hook(a_2,x_2)$ with $a_1\leq a_2$ are disjoint if either $a_1<a_2<x_2<x_1$ or $a_1<x_1<a_2<x_2$.

\begin{theorem}
    \label{general durfee}
    Let $S$ be a numerical semigroup with depth 2 for which $\lambda(S)$ has a Durfee square of size $n$. Let $F = \F(S)$ and suppose that $F, F-\alpha_1, F-\alpha_2, F-\alpha_3,...,F-\alpha_{n-1}$ are the $n$ largest gaps of $S$ in descending order.
    
Suppose $\alpha_1,\ldots, \alpha_{n-1}$ have the property that $\alpha_i=\alpha_j+\alpha_k$ implies $j=k=i-1$.
    Then $S$ is $\lambda$-minimal.
\end{theorem}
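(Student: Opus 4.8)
The plan is to fix an arbitrary numerical set $T=S\cup I$ associated to $S$ and to show $|\lambda(S)|\le|\lambda(T)|$ by producing $n$ pairwise disjoint hooks in $\lambda(T)$ whose lengths are the $n$ principal hook lengths of $\lambda(S)$. Write the $n$ largest gaps as $x_i=F-\alpha_{i-1}$ for $1\le i\le n$ (with $\alpha_0=0$), and let $0=u_1<u_2<\cdots$ be the elements of $S$, so $u_2=\m(S)$. Since the Durfee square of $\lambda(S)$ has size $n$, the boxes $\Box(u_i,x_i)$ are exactly the diagonal boxes and the principal hooks $\hook(u_i,x_i)$ partition $\lambda(S)$; hence $|\lambda(S)|=\sum_{i=1}^n (x_i-u_i)$, and these hooks are nested because $u_1<\cdots<u_n<x_n<\cdots<x_1$. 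Three consequences of depth $2$ are used throughout: $\F(T)=F$ for every associated $T$ (since $F\notin\M(S)$); each $x_i$ with $i\ge 2$ satisfies $x_i>u_i\ge\m(S)$, so $x_i+\m(S)>2\m(S)>F$ and $x_i\in\PF(S)$; and each $\alpha_{i-1}=F-x_i$ is a small minimal element of $(\M(S),\preccurlyeq)$.

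Next I would build the $n$ hooks. The outer hook $\hook(0,F)$ is always present and has length $x_1-u_1=F$. For $i\ge 2$, if $x_i\notin I$ then $x_i$ is still a gap of $T$, so $\hook(u_i,x_i)$ is a genuine hook of $\lambda(T)$ of the correct length $x_i-u_i$. The essential case is $x_i\in I$: then $x_i\in I\cap\PF(S)$, so Theorem~\ref{thm:classify_num_set} gives either $\alpha_{i-1}=F-x_i\in I$ or a Frobenius triangle $(x_i,y,z)$ satisfied by $I$. Here the hypothesis enters decisively. If $(x_i,y,z)$ is such a triangle, then $z\ge 1$ forces $F-y>x_i$; as $F-y$ is a gap exceeding the $i$-th largest gap $x_i$, it is one of $x_1,\dots,x_{i-1}$, so $y=F-x_{i'}=\alpha_{i'-1}$ for some $i'<i$, and likewise $z=\alpha_{i''-1}$. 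Thus $\alpha_{i'-1}+\alpha_{i''-1}=\alpha_{i-1}$, and the property $\alpha_a=\alpha_b+\alpha_c\Rightarrow b=c=a-1$ forces $y=z=\alpha_{i-2}$ and $\alpha_{i-1}=2\alpha_{i-2}$. In other words, the only available Frobenius triangles are the degenerate ones, and in every case a small element ($\alpha_{i-1}$ in the first case, $\alpha_{i-2}$ in the second) has entered $T$; I would use this new small element together with a suitable small gap of $T$ to assemble a replacement hook of length $x_i-u_i$ separated from the surviving inner hooks.

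This degeneracy is precisely what the hypothesis buys, and its necessity is visible in Example~\ref{ex:non_lam}: there $n=4$ with $\alpha_1=1,\alpha_2=2,\alpha_3=3$, and $\alpha_3=\alpha_1+\alpha_2$ produces the genuine Frobenius triangle $(14,1,2)$, which is exactly what lets $I=\{1,14,16\}$ give an associated $T$ with $|\lambda(T)|<|\lambda(S)|$. The main obstacle is the remaining bookkeeping: verifying that the replacement hooks always attain length at least $x_i-u_i$ and that the full family of $n$ hooks is pairwise disjoint (each hook nested inside or separated from every other), especially when several $x_i$ lie in $I$ at once. To organize this cleanly I would instead reduce the whole theorem, via Lemma~\ref{set_counting}, to the single inequality $|A|\ge|B|$ for $A=\{(i,h):i\in I,\ h\in\G(T),\ i<h\}$ and $B=\{(s,i):s\in S,\ i\in I,\ s<i\}$. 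On the self-dual part $\{i\in I:F-i\in I\}$ the map $(s,i)\mapsto(F-i,F-s)$ of Proposition~\ref{self-dual-implies-lambda-S-small} already injects the relevant pairs of $B$ into $A$; the remaining $i\in I$ with $F-i\notin I$ are exactly those to which Proposition~\ref{Element in I} attaches an ideal triangle, and the degeneracy established above is what forces enough gaps of $T$ to lie above these $i$ to absorb the corresponding pairs of $B$. Carrying out this count, equivalently showing $\sum_{i\in I}\big(\#\{h\in\G(T):h>i\}-\#\{s\in S:s<i\}\big)\ge 0$, is where the real work and the quantitative use of the $\alpha_i$ reside.
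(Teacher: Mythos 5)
Your opening is correct and coincides with the paper's own strategy: the principal-hook decomposition of $\lambda(S)$ with $|\lambda(S)|=\sum_{i=1}^n(x_i-u_i)$, the observation that each $x_i$ with $i\ge 2$ is a pseudo-Frobenius number (so the ``moreover'' clause of Theorem~\ref{thm:classify_num_set}, or equivalently Proposition~\ref{Element in I}, applies to it), and the deduction that the hypothesis on the $\alpha_i$ forces any Frobenius triangle through $x_i$ to be the degenerate one $(x_i,\alpha_{i-2},\alpha_{i-2})$ with $\alpha_{i-1}=2\alpha_{i-2}$. The genuine gap is that your argument stops exactly where the paper's work begins. In the hook route you never exhibit the replacement hooks: the ``suitable small gap'' you allude to is $F-u_i$, which lies in $\G(T)$ for \emph{every} associated $T$ because $u_i\in S$ forces $F-u_i\notin S\cup\M(S)\supseteq T$; with it one takes $\hook(\alpha_{i-1},F-u_i)$ when $\alpha_{i-1}\in I$ (length exactly $x_i-u_i$) and $\hook(\alpha_{i-2},F-u_i)$ in the degenerate-triangle case (length greater than $x_i-u_i$). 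Pairwise disjointness of the resulting $n$ hooks is then a genuine six-case verification, not routine bookkeeping: for instance, a hook $\hook(\alpha_{i_1-1},F-u_{i_1})$ of the second kind and a hook $\hook(\alpha_{i_2-2},F-u_{i_2})$ of the third kind would share their left endpoint if $i_1=i_2-1$, and this is excluded only by noticing that the former requires $x_{i_1}\in T$ while the latter requires $x_{i_2-1}\notin T$. None of this appears in your text.

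Your fallback counting route does not close this gap, and the mechanism you describe for it would fail. By Lemma~\ref{set_counting} the inequality $|A|\ge|B|$ is \emph{equivalent} to the goal, so restating it is not progress; and the plan of absorbing, for each $i\in I$ with $F-i\notin I$, the pairs of $B$ ending at $i$ by gaps of $T$ lying above $i$ is false even under the theorem's hypothesis. Concretely, let $S=\{0,13,14,15,22,26,\rightarrow\}$: it has depth $2$, Durfee square of size $4$, and $(\alpha_1,\alpha_2,\alpha_3)=(1,2,4)$, which satisfies the hypothesis since the only additive relations are $\alpha_2=2\alpha_1$ and $\alpha_3=2\alpha_2$. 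The set $I=\{1,16,23\}$ is an order ideal of $(\M(S),\preccurlyeq)$, and $T=S\cup I$ is associated to $S$ by Theorem~\ref{thm:classify_num_set}, using the Frobenius triangles $(16,1,8)$ and $(23,1,1)$; yet $i=23$ has only two gaps of $T$ above it (namely $24$ and $25$) while five elements of $S$ lie below it. The global sum in your last display is still nonnegative, but only because of the large surplus at $i=1$; exhibiting that global compensation in general is precisely what the explicit disjoint-hook construction accomplishes, and it is the actual content of the theorem. So what you have is the correct skeleton together with an accurate diagnosis of where the difficulty lies, but the difficulty itself is left unresolved.
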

\begin{proof}
Since the Durfee square of $\lambda(S)$ has size $n$, the Young diagram of $\lambda(S)$ can be partitioned as a disjoint union $n$ hooks. Suppose that the $n$ smallest elements of $S$ are $0<F-\beta_1<\dots<F-\beta_{n-1}$. Note that $\m(S)=F-\beta_1$. Therefore, $\lambda(S)$ is the disjoint union of the hooks 
\[
\hook(0,F),\ \hook(F-\beta_1,F-\alpha_1),\ldots, \hook(F-\beta_{n-1},F-\alpha_{n-1}).
\]
These hooks have sizes $F, \beta_1-\alpha_1,\dots, \beta_{n-1}-\alpha_{n-1}$, and therefore 
\[
|\lambda(S)|=F+\sum_{i=1}^{n-1}(\beta_i-\alpha_i).
\]
Note that $0<\alpha_1<\dots<\alpha_{n-1}<\beta_{n-1}<\dots<\beta_1<\frac{F}{2}$, where the last inequality follows from the assumption that $S$ has depth 2.
The following diagram illustrates these hooks.
\begin{center}
\begin{figure}
\begin{tikzpicture}[scale=0.4, x=1cm,y=-1cm]
  \draw[thick] (0,0) rectangle (5,5);
  \foreach \i in {1,2,3,4}{
    \draw (0,\i) -- (5,\i);
    \draw (\i,0) -- (\i,5);
  }

  \draw (5,0) -- (12,0);  
  \draw (5,1) -- (10,1);  
  \draw (5,2) -- (9,2);  
  \draw (5,3) -- (7,3);
  \draw (5,4) -- (6,4);

  \draw (0,5) -- (0,10);  
  \draw (1,5) -- (1,9);  
  \draw (2,5) -- (2,8);  
  \draw (3,5) -- (3,7);
  \draw (4,5) -- (4,6);

  \draw[thick]
    (0,0) --
    (12,0) --(12,1) --
    (10,1) -- (10,2) --
    (9,2) -- (9,3) --
    (7,3) -- (7,4) --
    (6,4) -- (6,5) --
    (6,5) -- (5,5) --
    (5,6) -- (4,6) --
    (4,7) -- (3,7) --
    (3,8) -- (2,8) --
    (2,9) -- (1,9) --
    (1,10) -- (0,10) --
    (0,0);

  \node[right] at (12,0.5) {$F$};
  \node[right] at (10,1.5) {$F - \alpha_1$};
  \node[right] at (9,2.55) {$F - \alpha_2$};
  \node[right] at (6,4.6) {$F - \alpha_{n-1}$};

  \node[below,scale=0.65] at (0.75,10) {$0$};
  \node[below,scale=0.65] at (1.95,9) {$F-\beta_1$};
  \node[below,scale=0.65] at (2.95,8) {$F - \beta_2$};
  \node[below,scale=0.65] at (5.25,6) {$F - \beta_{n-1}$};

  \draw[red,very thick]
    (0.5,0.5) -- (12,0.5);
\draw[red,very thick]
    (0.5,0.5) -- (0.5,10);
  \draw[blue,very thick]
    (1.5,1.5) -- (10,1.5);
\draw[blue,very thick]
    (1.5,1.5) -- (1.5,9);
  \draw[green!50!black,very thick]
    (2.5,2.5) -- (9,2.5);
\draw[green!50!black,very thick]
    (2.5,2.5) -- (2.5,8);
    \draw[pink,very thick]
    (4.5,4.5) -- (6,4.5);
\draw[pink,very thick]
    (4.5,4.5) -- (4.5,6);
\end{tikzpicture}
\caption{Decomposition of Young diagram into $n$ hooks}
\end{figure}
\end{center}

We now consider the set of ideal triangles of $S$ that could potentially contain $F-\alpha_i$. Suppose $(F-\alpha_i,u,v)$ is an ideal triangle of $S$. Then $F-v$ is a gap of $S$ and $F>F-v=(F-\alpha_i)+u>F-\alpha_i$. This implies $F-v=F-\alpha_{j_1}$ for some $j_1$. Similarly $F-u=F-\alpha_{j_2}$, hence $v=\alpha_{j_1}$ and $u=\alpha_{j_2}$. If $(F-\alpha_i,\alpha_{j_1},\alpha_{j_2})$ is an ideal triangle,  then $\alpha_i=\alpha_{j_1}+\alpha_{j_2}$. The assumption in the statement of the theorem implies $j_1=j_2=i-1$.

Let $T$ be a numerical set associated to $S$.
We will find $n$ disjoint hooks in $\lambda(T)$, of sizes at least $F, \beta_1-\alpha_1,\dots,\beta_{n-1}-\alpha_{n-1}$. We choose the first hook to correspond to the pair $(0,F)$. This hook has size $F$.
None of $F-\alpha_1,\ldots, F-\alpha_{n-1}$ are elements of $S$. Proposition~\ref{Element in I} implies that for each $i$ satisfying $1\le i \le n-1$ one of the following holds 
\begin{itemize}
\item $F-\alpha_i\notin T$;
\item $F-\alpha_i,\alpha_i\in T$;
\item $F- \alpha_i, \alpha_{i-1} \in T$, $2\alpha_{i-1} = \alpha_i$, and $F-\alpha_{i-1} \not\in T$.
\end{itemize}

In the first case, we choose the $(i+1)$\textsuperscript{st} hook in our list to be $\hook(F-\beta_{i},F-\alpha_i)$. Since $F-\beta_i\in S\subseteq T$ and $F-\beta_i<F-\alpha_i$, we see that $\hook(F-\beta_{i},F-\alpha_i)$ is a hook in the Young diagram of $\lambda(T)$. This hook has size $\beta_i-\alpha_i$.

In the second case, we choose the $(i+1)$\textsuperscript{st} hook in our list to be $\hook(\alpha_i,\beta_i)$. Since $F-\beta_i \in S$, we see that $\beta_i$ is a gap of $S$ that is not in $\M(S)$.  Therefore, $\beta_i \not\in T$.
Moreover, $\alpha_i<\beta_i$, so this is indeed a valid hook. It has size $\beta_i-\alpha_i$.

If the third condition holds and the second does not, then we choose the $(i+1)$\textsuperscript{st} hook in our list to be $\hook(\alpha_{i-1},\beta_i)$. As in the previous case, we see that $\beta_i \not\in T$. 
Since $\alpha_{i-1}<\alpha_i<\beta_i$, we see that $\hook(\alpha_{i-1},\beta_i)$ is a valid hook in the Young diagram of $\lambda(T)$. This hook has size $\beta_i-\alpha_{i-1}>\beta_i-\alpha_i$.

Consider distinct $1\leq i_1,i_2\leq n-1$. We will show that the hooks in our list corresponding to $i_1$ and $i_2$ are disjoint.
\begin{enumerate}
    \item Suppose that the hooks are $\hook(F-\beta_{i_1},F-\alpha_{i_1})$, $\hook(F-\beta_{i_2},F-\alpha_{i_2})$. Assume $i_1<i_2$. Then we have $F-\beta_{i_1}<F-\beta_{i_2}<F-\alpha_{i_2}<F-\alpha_{i_1}$.
    \item Suppose that the hooks are $\hook(F-\beta_{i_1},F-\alpha_{i_1})$, $\hook(\alpha_{i_2},\beta_{i_2})$. Then we have $\alpha_{i_2}<\beta_{i_2}< F-\beta_{i_1}<F-\alpha_{i_1}$.
    \item Suppose that the hooks are $\hook(\alpha_{i_1},\beta_{i_1})$, $\hook(\alpha_{i_2},\beta_{i_2})$. Without loss of generality, we may assume that $i_1<i_2$. We see that $\alpha_{i_1}<\alpha_{i_2}<\beta_{i_2}<\beta_{i_1}$.
    \item Suppose that the hooks are $\hook(F-\beta_{i_1},F-\alpha_{i_1})$, $\hook(\alpha_{i_2-1},\beta_{i_2})$.
    Then we have $\alpha_{i_2-1}<\beta_{i_2}< F-\beta_{i_1}<F-\alpha_{i_1}$.
    \item Suppose that the hooks are $\hook(\alpha_{i_1},\beta_{i_1})$, $\hook(\alpha_{i_2-1},\beta_{i_2})$.
    For this case to happen, it is necessary that $F-\alpha_{i_1}\in T$ and $F-\alpha_{i_2-1}\notin T$.  This implies $i_1\neq i_2-1$.
    
    We first consider the sub-case where $i_2<i_1$. Then we have $\alpha_{i_2-1}<\alpha_{i_1}<\beta_{i_1}<\beta_{i_2}$.

    Next, consider the sub-case where $i_2>i_1$. Since $i_2-i_1\neq 1$, this implies $i_2-1>i_1$. In this case, we have $\alpha_{i_1}<\alpha_{i_2-1}<\beta_{i_2}<\beta_{i_1}$.

    \item Suppose that the hooks are $\hook(\alpha_{i_1-1},\beta_{i_1})$, $\hook(\alpha_{i_2-1},\beta_{i_2})$.
    Without loss of generality, we may assume that $i_1<i_2$. We see that $\alpha_{i_1-1}<\alpha_{i_2-1}<\beta_{i_2}<\beta_{i_1}$.
\end{enumerate}
Therefore, in all cases, the chosen hooks are disjoint from each other.
This shows that 
\[|\lambda(T)|\geq F+\sum\limits_{i=1}^{n-1}(\beta_i-\alpha_i)=|\lambda(S)|.\]
\end{proof}

\begin{proof}[Proof of Theorem~\ref{thm:Durfee}]
If $n\leq 3$, then $\alpha_1,\dots,\alpha_{n-1}$ always satisfy the condition that $\alpha_i=\alpha_j+\alpha_k$ implies $j=k=i-1$. Applying Theorem~\ref{general durfee} completes the proof.
\end{proof}

If we consider collections $\alpha_1,\ldots, \alpha_{n-1}$ for which $\alpha_i=\alpha_j+\alpha_k$ does not necessarily imply $j=k=i-1$, then the same arguments do not work. Let $S$ be one of the numerical semigroups from the statement of Theorem~\ref{thm:inf_non_lam_sizes}. The depth of $S$ is $2$ and $\lambda(S)$ has a Durfee square of size $4$.  The four largest gaps of $S$ are
\[
2m-1,2m-2,2m-3,2m-4, 
\]
with $\F(S) = 2m-1$. So $(\alpha_1, \alpha_2, \alpha_3)=(1,2,3)$. We have that $\alpha_3=\alpha_1+\alpha_2$. Therefore, Theorem~\ref{general durfee} does not apply, and as was shown in Theorem~\ref{thm:inf_non_lam_sizes}, $S$ is not $\lambda$-minimal. This example also shows that Theorem~\ref{thm:Durfee} is sharp in the sense that there are infinitely many numerical semigroups $S$ of depth $2$ for which $\lambda(S)$ has a Durfee square of size $4$ that are not $\lambda$-minimal.

\subsection{Depth 2 and Few Small Elements}

Our work in this section is motivated by some results of Singhal and Lin \cite{doi:10.1080/00927872.2021.1918136} that build on earlier work of Marzuola and Miller \cite{MarzuolaMiller}. They show that when considering all numerical sets $T$ with a fixed Frobenius number, $\A(T)$ is very likely to be of depth $2$ with $0,1,2$, or $3$ small elements. We show that in each of these cases, $\A(T)$ is $\lambda$-minimal. 

We begin by noting that the number of small elements and the Durfee square size of a numerical semigroup are closely related. 
\begin{lemma}\label{small to durfee lem}
    If a numerical semigroup $S$ has $k$ small elements, then the size of the Durfee square of $\lambda(S)$ is at most $k+1$. 
\end{lemma}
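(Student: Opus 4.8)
The plan is to reduce the statement to a count of the columns of the Young diagram of $\lambda(S)$, combined with the elementary observation that a Durfee square of side $d$ forces the largest part of a partition to be at least $d$.

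First I would make precise the relationship between columns and elements of $S$ using the enumeration walk recalled in the introduction. In the walk defining $\lambda(S)$, a unit step to the right is drawn for each $n\in S$ and a unit step up for each gap $n\in\G(S)$, and the steps with $n>\F(S)$ form the infinite run to the right that is discarded. Hence the finite profile of $\lambda(S)$ contains exactly one right step for each element of $S$ smaller than $\F(S)$ and one up step for each gap. Since the width of the Young diagram equals the total number of right unit steps in this finite profile, the number of columns of $\lambda(S)$ is precisely $\#\{s\in S : s<\F(S)\}$.

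Next I would identify this set explicitly. By definition the small elements of $S$ are the positive elements of $S$ that are less than $\F(S)$, so $\{s\in S : s<\F(S)\}=\{0\}\cup\{\text{small elements of }S\}$. If $S$ has $k$ small elements, then this set has $k+1$ elements, and therefore $\lambda(S)$ has exactly $k+1$ columns; equivalently, its largest part satisfies $\lambda_1=k+1$.

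Finally I would invoke the definition of the Durfee square. If the Durfee square of $\lambda(S)$ has side $d$, then $\lambda(S)$ has at least $d$ parts of size at least $d$, so in particular $\lambda_1\ge d$. Combining this with $\lambda_1=k+1$ gives $d\le k+1$, which is the claim. The only step requiring genuine care is the bookkeeping in the first paragraph, namely correctly matching right steps to elements of $S$ below the Frobenius number and confirming that this count gives the number of \emph{columns} rather than the number of rows; this follows directly from the walk construction, and the remaining steps are immediate.
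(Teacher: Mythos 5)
Your proof is correct and follows essentially the same route as the paper: the paper's one-line argument is precisely that the number of columns of $\lambda(S)$ is one more than the number of small elements, which bounds the Durfee square size. Your version simply fills in the walk-construction bookkeeping and the $\lambda_1 \ge d$ observation that the paper leaves implicit.
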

This follows directly from noting that the number of columns of $\lambda(S)$ is one more than the number of small elements of $S$. 
We demonstrate this with Example~\ref{small ele to durfee}.
\begin{example}
    \label{small ele to durfee}
    Consider the numerical semigroup $S=\{0,5,6,10,\rightarrow\}$. The Young diagram of $\lambda(S)$ is shown in Figure 4.
    \begin{figure}[H]\label{fig:Young}
    \begin{tikzpicture}[scale=0.4,x=1cm,y=0.9cm]
        \coordinate (A) at (0,0);
        \coordinate (B) at (0,1);
        \coordinate (C) at (0,2);
        \coordinate (D) at (0,3);
        \coordinate (E) at (0,4);
        \coordinate (F) at (0,5);
        \coordinate (G) at (0,6);
        \coordinate (H) at (0,7);

        \coordinate (I) at (1,0);
        \coordinate (J) at (1,1);
        \coordinate (K) at (1,2);
        \coordinate (L) at (1,3);
        \coordinate (M) at (1,4);

        \coordinate (N) at (2,4);

        \coordinate (O) at (3,4);
        \coordinate (P) at (3,5);
        \coordinate (Q) at (3,6);
        \coordinate (R) at (3,7);

        \coordinate (S) at (1,7);
        \coordinate (T) at (2,7);

        \draw (A)--(H)--(R)--(O)--(M)--(I)--(A);
        \draw (B)--(J);
        \draw (C)--(K);
        \draw (D)--(L);
        \draw (E)--(M);
        \draw (F)--(P);
        \draw (G)--(Q);
        \draw (M)--(S);
        \draw (N)--(T);

        \node at (0.5,-0.44) {0};
        \node at (1.5,3.6) {5};
        \node at (2.5,3.6) {6};
        \node at (3.3,6.5) {9};

    \end{tikzpicture}
\caption{The Young diagram of $\lambda(S)$}
    \end{figure}
\noindent The small elements of $S$ are $5$ and $6$, and $\lambda(S)$ has a Durfee square of size $3$. 
\end{example}

\begin{corollary}
    Numerical semigroups with depth $2$ and at most $2$ small elements are $\lambda$-minimal.
\end{corollary}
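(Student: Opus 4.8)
The plan is to derive this corollary immediately from the results already established, rather than proving anything from scratch. By Lemma~\ref{small to durfee lem}, a numerical semigroup $S$ with at most $2$ small elements has $\lambda(S)$ with a Durfee square of size at most $3$. If $S$ also has depth $2$, then $S$ satisfies exactly the hypotheses of Theorem~\ref{thm:Durfee}, which asserts that such a semigroup is $\lambda$-minimal. So the entire content of the corollary is a two-step chain of implications.

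First I would invoke Lemma~\ref{small to durfee lem} to pass from the hypothesis on the number of small elements to a bound on the Durfee square size: at most $2$ small elements forces a Durfee square of size at most $2+1 = 3$. Then I would invoke Theorem~\ref{thm:Durfee}, which guarantees $\lambda$-minimality precisely for depth-$2$ semigroups whose associated partition has Durfee square of size at most $3$. Combining these two facts closes the argument. There is no residual computation or case analysis to carry out, since both pieces are already fully proved earlier in the section.

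Since the argument is purely a composition of two cited results, I do not anticipate any genuine obstacle. The only thing to be careful about is making sure the chain of inequalities lines up: the lemma gives Durfee size $\le k+1$, and here $k \le 2$ yields $\le 3$, which is exactly the threshold in Theorem~\ref{thm:Durfee} (size \emph{at most} $3$). Because the theorem's bound is inclusive of $3$, the weaker cases of $0$, $1$, or $2$ small elements are all covered a fortiori. A one-sentence proof suffices: if $S$ has depth $2$ and at most $2$ small elements, then by Lemma~\ref{small to durfee lem} the Durfee square of $\lambda(S)$ has size at most $3$, so $S$ is $\lambda$-minimal by Theorem~\ref{thm:Durfee}.
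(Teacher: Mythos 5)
Your proof is correct and is essentially identical to the paper's own proof: both apply Lemma~\ref{small to durfee lem} to bound the Durfee square size by $3$, then invoke Theorem~\ref{thm:Durfee} to conclude $\lambda$-minimality.
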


\begin{proof}
    If $S$ has at most $2$ small elements, then Lemma \ref{small to durfee lem} implies that $\lambda(S)$ has a Durfee square of size at most $3$. Since $S$ has depth $2$, Theorem \ref{thm:Durfee} implies that $S$ is $\lambda$-minimal.
\end{proof}

In this section, we only consider numerical semigroups of depth $2$. This significantly simplifies the notation for the sections of the Young diagram of $\lambda(T)$ that we introduced in Section \ref{sec:staircase}. Let $\lambda(T)_{1}=\lambda(T)_{1,0}$, $\lambda(T)_{2}=\lambda(T)_{0,0}$, and $\lambda(T)_{3}=\lambda(T)_{0,1}$.
We also denote $\mathcal{H}(T)_{1}=\mathcal{H}(T)_{1,0}$, $\mathcal{H}(T)_{2}=\mathcal{H}(T)_{0,0}$, and $\mathcal{H}(T)_{3}=\mathcal{H}(T)_{0,1}$.

\begin{lemma}\label{lem: section 2}
If $S$ is a numerical semigroup of depth $2$ and $T$ is a numerical set associated to $S$, then
\[
\mathcal{H}(\lambda(T))\cap [\F(S)-\m(S)+1,\F(S)]\subseteq \mathcal{H}(T)_2.
\]
Moreover, if $T=S$, then
\[
\mathcal{H}(\lambda(S))\cap [\F(S)-\m(S)+1,\F(S)]= \mathcal{H}(S)_2
\]
and $|\mathcal{H}(S)_2|=|\lambda(S)_2|$.
\end{lemma}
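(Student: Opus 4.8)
The plan is to unpack the three claims of Lemma~\ref{lem: section 2} using the explicit description of the section $\lambda(T)_2 = \lambda(T)_{0,0}$ that comes from the depth~$2$ assumption. Setting $m=\m(S)$ and $F=\F(S)$, since the depth is $2$ we have $\q(S)=2$, so the only index pairs $(a,b)$ with $a+b\le q-1=1$ are $(0,0)$, $(1,0)$, and $(0,1)$. The section $\lambda(T)_2=\lambda(T)_{0,0}$ consists of boxes $\Box(u,x)$ with $\lfloor u/m\rfloor = 0$ and $\lfloor (F-x)/m\rfloor = 0$, i.e.\ $u\in[0,m-1]\cap T$ and $x\in[F-m+1,F]\cap\G(T)$. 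For such a box the hook length is $|\hook(u,x)|=x-u$, which ranges over $x-u \ge (F-m+1)-(m-1)$ and $x-u\le F-0 = F$. The first move is to observe that any box in $\lambda(T)$ whose hook length lies in $[F-m+1,F]$ must in fact lie in this section: if $x-u\ge F-m+1$ then combined with $x\le F$ and $u\ge 0$ a short interval argument forces $u\le m-1$ and $x\ge F-m+1$, placing the box in $\lambda(T)_{0,0}$.

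For the first (inclusion) statement I would argue directly on the pair $(u,x)$. Suppose $\Box(u,x)$ is a box of $\lambda(T)$ with hook length $x-u\in[F-m+1,F]$. Since $x\in\G(T)\subseteq\G(\A(T))=\G(S)$ and $S$ has depth~$2$, every gap of $S$ is at most $F<2m$, so in particular $x\le F$. Then $u = x-(x-u)\le F-(F-m+1)=m-1$, and $x=(x-u)+u\ge (F-m+1)+0=F-m+1$. Hence $u\in[0,m-1]$ and $x\in[F-m+1,F]$, so $\Box(u,x)\in\lambda(T)_{0,0}=\lambda(T)_2$ and its hook length lies in $\mathcal{H}(T)_2$. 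This gives $\mathcal{H}(\lambda(T))\cap[F-m+1,F]\subseteq\mathcal{H}(T)_2$.

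For the $T=S$ statement I would prove the reverse inclusion and then the cardinality claim. In the Young diagram of $\lambda(S)$ the section $\lambda(S)_{0,0}$ is particularly simple because $u\in[0,m-1]\cap S=\{0\}$ (as $m$ is the multiplicity, no nonzero element of $S$ lies below $m$). Thus every box of $\lambda(S)_2$ has the form $\Box(0,x)$ with $x\in[F-m+1,F]\cap\G(S)$, and its hook length is exactly $x\in[F-m+1,F]$. This shows $\mathcal{H}(S)_2\subseteq[F-m+1,F]$, giving the reverse inclusion and hence equality $\mathcal{H}(\lambda(S))\cap[F-m+1,F]=\mathcal{H}(S)_2$. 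Moreover, because each box $\Box(0,x)$ in $\lambda(S)_2$ has a \emph{distinct} hook length (namely $x$ itself, and these $x$ are distinct gaps), the map $\Box(0,x)\mapsto x$ is a bijection from $\lambda(S)_2$ onto $\mathcal{H}(S)_2$, so $|\mathcal{H}(S)_2|=|\lambda(S)_2|$.

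The main obstacle, and the only place requiring care, is the cardinality claim $|\mathcal{H}(S)_2|=|\lambda(S)_2|$: in general $|\mathcal{H}(T)_{a,b}|\le|\lambda(T)_{a,b}|$ can be a strict inequality because several boxes in a section may share a hook length, but here the point is that when $T=S$ the section $\lambda(S)_2$ collapses to a single column (all boxes have $u=0$) whose hook lengths $x$ are pairwise distinct. Identifying this collapse and verifying injectivity of the hook-length map on this section is the crux; the interval arithmetic in the inclusion statements is routine once the depth~$2$ bound $F<2m$ is invoked.
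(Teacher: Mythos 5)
Your proof is correct and takes essentially the same approach as the paper: the same interval arithmetic on the depth-$2$ sections (you place a box realizing a large hook length directly into $\lambda(T)_{0,0}$, while the paper equivalently bounds the hook lengths in sections $1$ and $3$ by $F-\m(S)$), followed by the identical observation that $\lambda(S)_2$ collapses to the single column of boxes $\Box(0,x)$ with pairwise distinct hook lengths, which yields both the equality and the cardinality claim.
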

\begin{proof}
Let $m=\m(S)$ and $F=\F(S)$.
Recall that $\lambda(T)_{1}$ consists of the $\Box(u,x)$ for which $0\leq u<m$ and $x\leq F-m$. This means that every hook length in $\mathcal{H}(T)_1$ is at most $F-m$.
Similarly, $\lambda(T)_{3}$ consists of the $\Box(u,x)$ for which $m\leq u$ and $F-m<x\leq F$. This means that every hook length in $\mathcal{H}(T)_3$ is at most $F-m$.
Therefore, hook lengths larger than $F-m$ can only occur in $\mathcal{H}(T)_{2}$. The first part of the lemma follows.

Next, suppose that $T=S$. Note that $[0,m-1]\cap S=\{0\}$. This means that $\lambda(S)_2$ consists of $\Box(0,x)$ for $x\in \G(S)$ with $F-m<x\leq F$. This shows that every hook length in $\mathcal{H}(S)_2$ is at least $F-m+1$. It follows that $\mathcal{H}(S)_{2}$ has precisely those hook lengths that are larger than $F-m$.
Finally, from the description 
\[
\lambda(S)_2=\{\Box(0,x): x\in \G(S), F-m<x\},
\]
we see that there are no repeated hook lengths in $\lambda(S)_2$. This implies that $|\mathcal{H}(S)_2|=|\lambda(S)_2|$.
\end{proof}

\begin{corollary}\label{cor: section 2}
If $S$ is a numerical semigroup of depth $2$ and $T$ is a numerical set associated to $S$, then $|\lambda(S)_2|\leq |\lambda(T)_2|$.
\end{corollary}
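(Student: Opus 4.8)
The plan is to chain together the three facts recorded in Lemma~\ref{lem: section 2}, using as the crucial bridge the observation that $\lambda(S)$ and $\lambda(T)$ have exactly the same hook \emph{set}. First I would recall that since $T$ is a numerical set associated to $S$ we have $\A(T)=S$, so that $\mathcal{H}(\lambda(T))=\Hook(\lambda(T))=\N\setminus S=\G(S)$; applying the same reasoning to $S$ itself (which satisfies $\A(S)=S$) gives $\mathcal{H}(\lambda(S))=\G(S)$ as well. Hence $\mathcal{H}(\lambda(S))=\mathcal{H}(\lambda(T))$ as sets, and intersecting both sides with the interval $[\F(S)-\m(S)+1,\F(S)]$ yields
\[
\mathcal{H}(\lambda(S))\cap[\F(S)-\m(S)+1,\F(S)]=\mathcal{H}(\lambda(T))\cap[\F(S)-\m(S)+1,\F(S)].
\]

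Next I would feed this equality into the two halves of Lemma~\ref{lem: section 2}. For the numerical set $S$, the lemma supplies the exact identity $\mathcal{H}(\lambda(S))\cap[\F(S)-\m(S)+1,\F(S)]=\mathcal{H}(S)_2$ together with the count $|\mathcal{H}(S)_2|=|\lambda(S)_2|$. For the general associated set $T$, the lemma supplies only the inclusion $\mathcal{H}(\lambda(T))\cap[\F(S)-\m(S)+1,\F(S)]\subseteq\mathcal{H}(T)_2$. Combining these with the equality above and the trivial bound $|\mathcal{H}(T)_2|\le|\lambda(T)_2|$ produces the chain
\[
|\lambda(S)_2|=|\mathcal{H}(S)_2|=\bigl|\mathcal{H}(\lambda(T))\cap[\F(S)-\m(S)+1,\F(S)]\bigr|\le|\mathcal{H}(T)_2|\le|\lambda(T)_2|,
\]
which is precisely the asserted inequality.

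I do not expect a serious obstacle: once one notices that the hook sets of $\lambda(S)$ and $\lambda(T)$ literally coincide (both equal the gap set $\G(S)$), the corollary is a direct bookkeeping consequence of the lemma. The one point requiring care is the asymmetry between the two conclusions of Lemma~\ref{lem: section 2}. For $S$ we have an exact enumeration of the large hook lengths with no repetitions, whereas for a general $T$ those large hook lengths are merely \emph{contained} in section~$2$ and may occur with multiplicity. The argument must therefore be arranged so that the exact identity and the repetition-free count $|\mathcal{H}(S)_2|=|\lambda(S)_2|$ are invoked on the $S$ side, while only the weaker inclusion and the bound $|\mathcal{H}(T)_2|\le|\lambda(T)_2|$ are used on the $T$ side.
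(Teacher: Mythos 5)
Your proposal is correct and follows essentially the same route as the paper's own proof: use $\A(T)=S$ to identify the hook sets $\mathcal{H}(\lambda(S))=\mathcal{H}(\lambda(T))$, then chain the exact identity and repetition-free count from Lemma~\ref{lem: section 2} on the $S$ side against the mere inclusion and the bound $|\mathcal{H}(T)_2|\le|\lambda(T)_2|$ on the $T$ side. The asymmetry you flag at the end is exactly the structure of the paper's argument, so there is nothing to add.
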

\begin{proof}
Since $\A(T)=S$, we know that $\mathcal{H}(\lambda(T))=\mathcal{H}(\lambda(S))$. Therefore, Lemma~\ref{lem: section 2} implies that
\[\mathcal{H}(S)_2= \mathcal{H}(\lambda(S)) \cap [F-m+1,F] = \mathcal{H}(\lambda(T)) \cap [F-m+1,F] \subseteq \mathcal{H}(T)_2.\]
This shows that $|\lambda(S)_2|= |\mathcal{H}(S)_2|\leq |\mathcal{H}(T)_2|\leq |\lambda(T)_2|$.
\end{proof}

\begin{theorem}
    If $S$ is a numerical semigroup of depth $2$ with $3$ small elements, then $S$ is $\lambda$-minimal.
\end{theorem}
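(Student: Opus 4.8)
The plan is to reduce the statement to a single counting inequality and then use the fact that having only three small elements makes the void poset $(\M(S),\preccurlyeq)$ small and rigid. Write $m=\m(S)$ and $F=\F(S)$; since $S$ has depth $2$ we have $m\le F<2m$, and the three small elements are $m<m+a<m+b<F=m+c$ for integers $0<a<b<c<m$. From this description of $S$ one computes $\G(S)$ and the void, finding in particular that the part of $\M(S)$ above $m$ is $U=\{\,m+j:1\le j\le c-1,\ j\ne a,b\,\}$ and the part below $m$ is $L=[1,m-1]\setminus\{c-b,c-a,c\}$. Let $T=S\cup I$ be any numerical set associated to $S$. By Lemma~\ref{set_counting} it suffices to prove $|A|\ge|B|$, and setting $\delta(i)=\#\{h\in\G(T):h>i\}-\#\{s\in S:s<i\}$ we have $|A|-|B|=\sum_{i\in I}\delta(i)$. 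In the language of the section decomposition, Corollary~\ref{cor: section 2} already yields $|\lambda(S)_2|\le|\lambda(T)_2|$, so the real content is to control the remaining boxes; the inequality $|A|\ge|B|$ packages all of this at once.

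First I would dispose of the lower void. If $i\in I$ with $i<m$, then $0$ is the only element of $S$ below $i$, so $\#\{s\in S:s<i\}=1$, while $F\in\G(T)$ lies above $i$; hence $\delta(i)\ge0$ for every $i\in L\cap I$. The same bookkeeping gives the sharper formula $\delta(i)=F-i-4-\#\{i'\in I:i'>i\}$ for $i\in L\cap I$, which measures the surplus contributed by each lower-void element. Consequently the only possibly negative terms come from the upper void, and a direct count yields the clean expression $\delta(m+j)=c-j-4-\#\{i'\in U\cap I:i'>m+j\}$, independent of $a$ and $b$.

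It then remains to show that the surplus from $L\cap I$ dominates the deficit from $U\cap I$. Here I would apply Proposition~\ref{Element in I} to each $i=m+j\in U\cap I$: either $F-i=c-j\in I$, or there is an ideal triangle $(i,y,z)$ satisfied by $I$ with $y\in I$ and $y+z=c-j$. In either case a lower-void element ($c-j$ or $y$) is forced into $L\cap I$, and since $I$ is an order ideal, placing such a small element in $I$ forces many elements of $U$ into $I$ as well, coupling the two sums. The hard part will be exactly this bookkeeping: one must charge each deficit from $U\cap I$ against enough of the lower-void surplus, uniformly over all admissible order ideals $I$ and with no double counting, since distinct elements of $U\cap I$ may point to overlapping or nested elements of $L\cap I$. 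The deficits occur only for $j$ close to $c$ (where $c-j<4$), and the elements $c-j$ they force are correspondingly close to $0$ and hence carry large surplus; making this trade-off precise, together with the order-ideal constraints, is where three small elements are essential and where the argument does its real work.

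As a consistency check and an alternative route for most cases, note that three small elements force the Durfee square of $\lambda(S)$ to have size at most $4$ by Lemma~\ref{small to durfee lem}. Sizes at most $3$ are covered by Theorem~\ref{thm:Durfee}, and size $4$ is covered by Theorem~\ref{general durfee} unless, writing $F,F-\alpha_1,F-\alpha_2,F-\alpha_3$ for the four largest gaps of $S$, one has $\alpha_3=\alpha_1+\alpha_2$. This isolates precisely the configuration where the disjoint-hook assignment of Theorem~\ref{general durfee} can fail (the analogue, in the five-small-element world, of Example~\ref{ex:non_lam}), and it is on this configuration that the counting argument above must be carried out in full.
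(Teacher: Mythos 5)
Your counting framework is set up correctly as far as it goes: the reduction via Lemma~\ref{set_counting} to showing $\sum_{i\in I}\delta(i)\geq 0$, the formulas $\delta(i)=F-i-4-\#\{i'\in I\colon i'>i\}$ for lower-void elements and $\delta(m+j)=c-j-4-\#\{i'\in U\cap I\colon i'>m+j\}$ for upper-void elements, and the use of Proposition~\ref{Element in I} to force, for each $m+j\in U\cap I$, an element of $L\cap I$ of size at most $c-j$, all check out. But the proof stops exactly where the theorem lives: you never establish that the lower-void surplus dominates the upper-void deficit, and you say so yourself (``the hard part will be exactly this bookkeeping''). This is not a routine verification that can be waved through. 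For example, if $I$ contains the top $t$ elements of $U$ (all above the largest small element), each contributes $\delta=-3$, so the deficit grows like $3t$; meanwhile Theorem~\ref{thm:classify_num_set} and Proposition~\ref{Element in I} force $1,2,\dots,t$ into $I$, but order-ideal closure then forces many further elements of $U$ into $I$, which in turn shrinks each surplus term $\delta(r)=F-r-4-\#\{i'\in I\colon i'>r\}$. A termwise charge of one forced lower element against one deficient upper element therefore does not obviously close, and no argument is given that it does. That unproven inequality \emph{is} the theorem; what you have is a correct reformulation plus a plan.

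Your fallback route also covers strictly less than ``most cases.'' Writing the small elements as $m<F-a<F-b$, the Durfee square of $\lambda(S)$ has size $4$ exactly when $b\geq 4$, and in that case the three largest non-Frobenius gaps are $F-1,F-2,F-3$, so $(\alpha_1,\alpha_2,\alpha_3)=(1,2,3)$ and $\alpha_3=\alpha_1+\alpha_2$ \emph{always} holds. Thus Theorem~\ref{general durfee} never applies in the Durfee-$4$ case: your ``exceptional configuration'' is the entire case beyond Durfee size $3$, and everything is thrown back onto the missing counting argument. For comparison, the paper's proof disposes of $b\leq 2$ via Theorem~\ref{thm:Durfee} and then, for $b\geq 3$, avoids global bookkeeping entirely: it observes that no ideal triangle of $S$ contains $F-1$ and the only one containing $F-2$ is $(F-2,1,1)$, so Proposition~\ref{Element in I} leaves only a few cases, and in each case it exhibits four explicit pairwise disjoint hooks of sizes at least $F-m$, $F-m$, $a-1$, $b-2$ in $\lambda(T)_1\cup\lambda(T)_3$, combining this with Corollary~\ref{cor: section 2} for the middle section. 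Some such local, structural input is what your global charging scheme still needs before it is a proof.
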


\begin{proof}
Let $m = \m(S)$.  Define $a$ and $b$ so that the three small elements of $S$ are $m <F-a<F-b$. So $b<a<F-m<\frac{F}{2}$.
If $b\leq 3$, then the size of the Durfee square of $\lambda(S)$ is at most $3$.  Applying Theorem~\ref{thm:Durfee} completes the proof in this case. Therefore, assume $b\geq 4$. 
Let $T$ be a numerical set associated to $S$.
\begin{figure}[H]
\begin{tikzpicture}[x=1.1cm, y=0.9cm]
    \coordinate (A) at (0,0);
    \coordinate (B) at (0,1);
    \coordinate (C) at (0,3);
    \coordinate (D) at (0.5,0);
    \coordinate (E) at (0.5,1);
    \coordinate (F) at (0.5,1.4);
    \coordinate (G) at (0.5,3);
    \coordinate (H) at (1,1.4);
    \coordinate (I) at (1,1.8);
    \coordinate (J) at (1.5,1.8);
    \coordinate (K) at (1.5,2.4);
    \coordinate (L) at (2,2.4);
    \coordinate (M) at (2,3);

    \draw (A)--(C);
    \draw (A)--(D);
    \draw (D)--(G);
    \draw (B)--(E);
    \draw (F)--(H);
    \draw (H)--(I);
    \draw (I)--(J);
    \draw (J)--(K);
    \draw (K)--(L);
    \draw (L)--(M);
    \draw (C)--(M);

    \node[scale=0.8] at (0.25,-0.15) {$0$};
    \node[scale=0.8] at (0.9,0.8) {$F-m$};
    \node[scale=0.8] at (0.75,1.25) {$m$};
    \node[scale=0.8] at (1.4,1.6) {$F-a$};
    \node[scale=0.8] at (1.85,2.2) {$F-b$};
    \node[scale=0.8] at (2.12,2.84) {$F$};

    \node at (0.25,0.5) {\textbf{1}};
    \node at (0.25,2) {\textbf{2}};
    \node at (1,2.4) {\textbf{3}};
    \end{tikzpicture}
\caption{Young diagram of $S=\{0,m,F-a,F-b,F+1\rightarrow\}$}
\end{figure}

By Corollary~\ref{cor: section 2}, we know that 
$|\lambda(S)_2|\leq |\lambda(T)_2|$.
Therefore, it suffices to show that 
    \[
|\lambda(S)_1|+|\lambda(S)_3|\leq|\lambda(T)_1|+|\lambda(T)_3|.
\]
Note that $|\lambda(S)_1|=F-m$. Since $b\geq 3$, we know that $F-1,F-2\in \G(S)$. Therefore, $|\lambda(S)_3|=(F-m)+(a-1)+(b-2)$.
We will construct $4$ disjoint hooks in $\lambda(T)_1\cup \lambda(T)_3$ of sizes at least $F-m$, $F-m$, $a-1$ and $b-2$.

Notice that there are no ideal triangles of $S$ containing $F-1$ and the only possibility for an ideal triangle of $S$ that contains $F-2$ is $(F-2,1,1)$.

First, we know that $0,m\in T$ and $F-m,F\notin T$, so we have hooks $\hook(0,F-m)\subseteq \lambda(T)_1$ and $\hook(m,F)\subseteq \lambda(T)_3$. They both have size $F-m$.

Since $F-1\notin S$, and $F-1$ is not contained in any ideal triangle of $S$, Theorem~\ref{Element in I} implies that we have two possibilities:
\begin{itemize}
    \item $F-1\notin T$. In this case, we take the hook $\hook(F-a,F-1)\subseteq\lambda(T)_3$. Note that this is disjoint from $\hook(m,F)$ as $m<F-a<F-1<F$.
    \item $F-1,1\in T$. In this case, we take the hook $\hook(1,a)\subseteq\lambda(T)_1$. Note that $a\notin T$ as $a\notin S\cup \M(S)$. Therefore, this is a valid hook. Moreover, this hook is disjoint from $\hook(0,F-m)$ as $0<1<a<F-m$.  In either case, we have constructed a hook of size $a-1$.
\end{itemize}

Since $F-2\notin S$ and the only possibility for an ideal triangle of $S$ containing $F-2$ is $(F-2,1,1)$, Theorem~\ref{Element in I} implies that we have three possibilities:
\begin{itemize}
    \item $F-2\notin T$. In this case, we take the hook $\hook(F-b,F-2)\subseteq\lambda(T)_3$. Note that this is disjoint from $\hook(m,F)$ as $m<F-b<F-2<F$.
    It is also disjoint from $\hook(F-a,F-1)$ or $\hook(1,a)$, whichever was chosen.
    \item $F-2,2\in T$. In this case, we take the hook $\hook(2,b)\subseteq\lambda(T)_1$. Note that $b\notin T$ as $b\notin S\cup \M(S)$. Therefore, this is a valid hook. Moreover, this is disjoint from $\hook(0,F-m)$ as $0<2<b<F-m$.
    It is also disjoint from $\hook(F-a,F-1)$ or $\hook(1,a)$, whichever was chosen.
    \item $F-2,1\in T$ and $F-1\notin T$.
    In this case, we take the hook $\hook(1,b)\subseteq\lambda(T)_1$. This is disjoint from $\hook(0,F-m)$ as $0<1<b<F-m$.
    Since $F-1\notin T$, the previous hook chosen must have been $\hook(F-a,F-1)$ which is disjoint from this hook.  In any of these cases, we have constructed a hook of size $b-2$.
\end{itemize}

In all cases we constructed $4$ disjoint hooks in $\lambda(T)_1\cup \lambda(T)_3$ of sizes at least $F-m$, $F-m$, $a-1$ and $b-2$. This implies that
\[
|\lambda(S)_1|+|\lambda(S)_3|=(F-m)+(F-m)+(a-1)+(b-2)\leq |\lambda(T)_1|+|\lambda(T)_3|.
\]
We conclude that $|\lambda(S)|\leq |\lambda(T)|$ and $S$ is $\lambda$-minimal.
\end{proof}

\section{Numerical semigroups of small type}\label{sec:small_type}

We begin this section by showing how earlier results imply that numerical semigroups of type at most $2$ are $\lambda$-minimal.
\begin{prop}\label{prop:type_less2}
Let $S$ be a numerical semigroup with $\type(S) \le 2$.  Then $S$ is $\lambda$-minimal.
\end{prop}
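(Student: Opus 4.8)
The plan is to handle the two type values separately, in each case reducing the problem to the combinatorial description of $\mathcal{P}(S)$ from Section~\ref{sec:void}. When $\type(S)=1$, the semigroup $S$ is symmetric, so $\M(S)=\varnothing$ and $\Pa(S)=1$; thus $\mathcal{P}(S)=\{\lambda(S)\}$ and $S$ is trivially $\lambda$-minimal. The substance of the argument is the case $\type(S)=2$, where I would show that $\mathcal{P}(S)$ consists of exactly two partitions, both of size $|\lambda(S)|$.

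First I would pin down the structure of the void poset. Since $\type(S)=2$, Proposition~\ref{prop:max_min_elements} shows that $(\M(S),\preccurlyeq)$ has a unique maximal element, namely the single element $P$ of $\PF(S)\setminus\{\F(S)\}$, and a unique minimal element $\F(S)-P$. In a finite poset a unique maximal (respectively minimal) element is a global maximum (respectively minimum), so $P$ lies above and $\F(S)-P$ lies below every element of $\M(S)$. Consequently every nonempty order ideal $I$ of $(\M(S),\preccurlyeq)$ contains $P$, and if such an $I$ contains $\F(S)-P$, then it equals all of $\M(S)$.

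Next I would classify the order ideals $I$ for which $T=S\cup I$ is associated to $S$. The ideal $I=\varnothing$ gives $T=S$. For a nonempty such $I$ we have $P\in I\cap\PF(S)$, so the final conclusion of Theorem~\ref{thm:classify_num_set} forces either $\F(S)-P\in I$ or the existence of a Frobenius triangle $(P,x,y)$ satisfied by $I$. The crux of the argument, and the step I expect to be the main obstacle, is ruling out the Frobenius triangle. Writing $F=\F(S)$, a Frobenius triangle requires $x,y\in\M(S)$ with $P+x+y=F$. Since $F-P$ is the global minimum of $(\M(S),\preccurlyeq)$, both $x$ and $y$ satisfy $x,y\succcurlyeq F-P$, hence $x,y\ge F-P$, which gives
\[
F=P+x+y\ge P+2(F-P)=2F-P,
\]
so $P\ge F$. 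This contradicts $P\in\G(S)$ with $P\neq F$. Therefore no Frobenius triangle can occur, so $\F(S)-P\in I$, and by the global-minimum observation this forces $I=\M(S)$ and $T=S^*$.

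Finally I would assemble the conclusion. The only numerical sets associated to $S$ are $S$ and $S^*$, so by Proposition~\ref{prop:enum_Tdual} the only partitions in $\mathcal{P}(S)$ are $\lambda(S)$ and $\widetilde{\lambda(S)}=\lambda(S^*)$, which have equal size. In particular $|\lambda(S)|\le|\lambda(T)|$ for every $T\in\mathcal{P}(S)$, so $S$ is $\lambda$-minimal. (Alternatively, once $I=\M(S)$ is identified, $I$ is self-dual and Proposition~\ref{self-dual-implies-lambda-S-small} yields $|\lambda(S)|\le|\lambda(T)|$ directly.)
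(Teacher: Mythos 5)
Your proof is correct, and it takes a genuinely different route from the paper's. The paper handles both cases by citation: Proposition~1.1 of Marzuola--Miller gives $\type(S)=1 \Rightarrow \Pa(S)=1$, Theorem~2.5 of Chen et al.\ gives $\type(S)=2 \Rightarrow \Pa(S)=2$, and the result then follows from the separate Proposition~\ref{prop:PS2_lam_min} ($\Pa(S)\le 2$ implies $\lambda$-minimal), whose proof cites the fact that $\Pa(S)=2$ forces $\lambda(S)$ to be non-self-conjugate, so that $\mathcal{P}(S)$ consists of $\lambda(S)$ and its conjugate. You instead reprove the needed classification from scratch using the void-poset machinery of Section~\ref{sec:void}: by Proposition~\ref{prop:max_min_elements}, type $2$ makes $P$ the unique maximal and $\F(S)-P$ the unique minimal element of $(\M(S),\preccurlyeq)$, hence a global maximum and minimum; your elimination of Frobenius triangles is sound, since $x,y\in\M(S)$ forces $x,y\succcurlyeq \F(S)-P$, which (because $\preccurlyeq$-comparability implies the numerical inequality) yields $\F(S)=P+x+y\ge 2\F(S)-P$ and the contradiction $P\ge \F(S)$; the ``moreover'' clause of Theorem~\ref{thm:classify_num_set} then leaves only $I=\varnothing$ and $I=\M(S)$, i.e.\ $T\in\{S,S^*\}$. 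What your approach buys: it is self-contained (you effectively reprove the special case of Chen et al.'s Theorem~2.5 used here), it identifies the two associated numerical sets explicitly, and it never needs the non-self-conjugacy citation, since your conclusion is indifferent to whether $\lambda(S)$ equals its conjugate. What the paper's approach buys: brevity, and the more general intermediate statement of Proposition~\ref{prop:PS2_lam_min}, which applies to any semigroup with $\Pa(S)\le 2$ regardless of type and requires no void-poset analysis at all.
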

Proposition 1.1 in \cite{MarzuolaMiller} implies that if $\type(S) =1$, then $\Pa(S) =1$.  Theorem 2.5 in \cite{chen2023enumeratingnumericalsetsassociated} says that if $\type(S) = 2$, then $\Pa(S) =2$.  Therefore, Proposition \ref{prop:type_less2} follows from the next result.  
\begin{prop}\label{prop:PS2_lam_min}
Let $S$ be a numerical semigroup with $\Pa(S) \le 2$.  Then $S$ is $\lambda$-minimal.
\end{prop}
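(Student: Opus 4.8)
The plan is to split on the value of $\Pa(S)$ and to exploit the duality $T\mapsto T^*$, which by Proposition \ref{prop:enum_Tdual} preserves both the associated semigroup and the size of the enumeration. The case $\Pa(S)=1$ is immediate: since $\lambda(S)\in\mathcal{P}(S)$ always, here $\mathcal{P}(S)=\{\lambda(S)\}$, so $\lambda(S)$ trivially has minimal size and $S$ is $\lambda$-minimal.

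So I would concentrate on the case $\Pa(S)=2$. First I would recall that the symmetric numerical semigroups are exactly those with $\Pa(S)=1$; since $\Pa(S)=2$, the semigroup $S$ is not symmetric, which is equivalent to $\M(S)\neq\varnothing$ and hence to $S\neq S^*$. By Proposition \ref{prop:enum_Tdual} we have $\A(S^*)=\A(S)=S$, so $S^*$ is a numerical set associated to $S$, and it is distinct from $S$. Thus $\lambda(S^*)$ is a partition in $\mathcal{P}(S)$ different from $\lambda(S)$, and because $\Pa(S)=2$ these are the only two, giving $\mathcal{P}(S)=\{\lambda(S),\lambda(S^*)\}$.

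To finish, I would invoke Proposition \ref{prop:enum_Tdual} once more to get $\lambda(S^*)=\widetilde{\lambda(S)}$. Conjugate partitions have the same size, so $|\lambda(S^*)|=|\lambda(S)|$; the two partitions of $\mathcal{P}(S)$ then have equal size, whence $\lambda(S)$ is of minimal size and $S$ is $\lambda$-minimal. There is essentially no hard step: the argument rests entirely on the fact that the dual operation pairs the numerical sets associated to $S$ into conjugate enumerations of equal size. The only point requiring care is verifying that $S^*$ is genuinely a second numerical set associated to $S$ when $\Pa(S)=2$ — that is, that $S\neq S^*$ — which is precisely where the characterization of symmetry via $\Pa(S)=1$ is used.
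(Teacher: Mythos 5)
Your proof is correct and takes essentially the same approach as the paper: both identify the second partition in $\mathcal{P}(S)$ as the conjugate $\widetilde{\lambda(S)} = \lambda(S^*)$ and conclude that the two partitions have equal size. The only difference is in how non-self-conjugacy is justified — the paper cites \cite[Proposition 11]{CONSTANTIN201799} directly, while you derive $S \neq S^*$ from the fact that $\Pa(S)=1$ exactly for symmetric semigroups, i.e.\ exactly when $\M(S)=\varnothing$.
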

\begin{proof}
If $\Pa(S) =1$, then $\lambda(S)$ is the only partition in $\mathcal{P}(S)$, so it is clearly the smallest one.  Suppose $\Pa(S) = 2$.  Then \cite[Proposition 11]{CONSTANTIN201799} implies that $\lambda(S)$ is not self-conjugate. So, the two partitions in $\mathcal{P}(S)$ are $\lambda(S)$ and its conjugate, which clearly have the same size.
\end{proof}

The main goal of this section is to prove Theorem \ref{thm:type3minimality}.  We will also give some remarks on the $\lambda$-minimality of numerical semigroups of type $4$.

We recall the discussion of the dual of a numerical set from Section \ref{sec:void}. If $T$ is a numerical set, then the dual of $T$ is 
\[
T^* = \{x\in \Z \colon \F(T) - x\not\in T\}.
\]
Proposition~\ref{prop:enum_Tdual} says that $\A(T) = \A(T^*)$. Let $S=\A(T)$. Therefore, $T = S \cup I$ and $T^* = S \cup J$ for some order ideals $I, J$ of $(\M(S),\preccurlyeq)$.  We now explain how these order ideals are related.

\begin{prop}\label{prop:dual_ideal}
    Let $S$ be a numerical semigroup with $\F(S) = F$. Let $T$ be a numerical set such that $T = S\cup I$ for some order ideal $I$ of $(\M(S),\preccurlyeq)$. Then $T^* = S\cup I^*$ where 
    \[
    I^* = \{x \in \M(S) \colon F - x \notin I\}.
    \]
    Moreover, $I^*$ is an order ideal of $(\M(S),\preccurlyeq)$.
\end{prop}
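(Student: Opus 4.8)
The plan is to first pin down $\F(T)$, then unwind the definition of the dual, verify the set equality $T^* = S\cup I^*$ by a short case analysis, and finally check that $I^*$ is upward closed using the order-reversing symmetry $x\mapsto F-x$ on the void poset.

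First I would record that $\F(T)=F$. Since $F\notin\M(S)$ (because $F-F=0\in S$ is not a gap), we have $F\notin I$, so $F$ remains a gap of $T=S\cup I$; as $T\supseteq S$ contains every integer exceeding $F$, this gives $\F(T)=F=\F(S)$. With this in hand the dual is $T^*=\{x\in\Z\colon F-x\notin T\}$, and I would immediately dispose of the easy ranges: every $x<0$ satisfies $F-x>F$, hence $F-x\in S\subseteq T$ and $x\notin T^*$, while every $x>F$ satisfies $F-x<0\notin T$, hence $x\in T^*$; these $x$ already lie in $S$. Thus it remains to analyze $x$ with $0\le x\le F$, where $x\in T^*$ is equivalent to $F-x\in\G(S)$ together with $F-x\notin I$.

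Next I would prove $T^*=S\cup I^*$. The inclusion $S\subseteq T^*$ follows most cleanly from Proposition~\ref{prop:enum_Tdual}, which gives $\A(T)=\A(T^*)$, combined with $S\subseteq\A(T)$ (Proposition~\ref{semigroup subset}) and $\A(T^*)\subseteq T^*$; alternatively one checks directly that for $s\in S$ with $s\le F$ one has $F-s\in\G(S)$ and $F-s\notin\M(S)$ (as $F-(F-s)=s\in S$), so $F-s\notin T$. For the remaining elements, take $x\in T^*$ with $0\le x\le F$ and $x\notin S$; then $x\in\G(S)$ and, by the equivalence above, $F-x\in\G(S)$, so $x\in\M(S)$, and since $F-x\notin I$ we get $x\in I^*$. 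Conversely, if $x\in I^*$ then $x\in\M(S)$, so $F-x\in\G(S)$, giving $F-x\notin S$, and $F-x\notin I$ by definition of $I^*$; hence $F-x\notin T$ and $x\in T^*$. Combining these inclusions yields $T^*=S\cup I^*$.

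Finally, for the ``moreover'' statement I would exploit the fact that $x\mapsto F-x$ is an order-reversing involution of $(\M(S),\preccurlyeq)$: it maps $\M(S)$ onto itself, and $y-x\in S$ is equivalent to $(F-x)-(F-y)\in S$, so $x\preccurlyeq y$ if and only if $F-y\preccurlyeq F-x$. Now suppose $x\in I^*$ and $x\preccurlyeq y$ with $y\in\M(S)$; I want $y\in I^*$, i.e.\ $F-y\notin I$. If instead $F-y\in I$, then from $F-y\preccurlyeq F-x$ and the fact that $I$ is an order ideal (upward closed) we would obtain $F-x\in I$, contradicting $x\in I^*$. Hence $F-y\notin I$ and $y\in I^*$, so $I^*$ is an order ideal. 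The only real care needed is bookkeeping around the paper's convention that an order ideal is upward closed, and correctly using that $x\mapsto F-x$ reverses $\preccurlyeq$; everything else is a direct unwinding of definitions, so I do not anticipate a genuine obstacle.
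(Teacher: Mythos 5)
Your proof is correct and takes essentially the same approach as the paper: a direct unwinding of the definition of $T^*$ into the part lying in $\M(S)$ (which is $I^*$) and the part equal to $S$, followed by the identical order-reversing argument using $x\mapsto F-x$ to show $I^*$ is upward closed. The only difference is cosmetic bookkeeping — you explicitly verify $\F(T)=F$ and organize the case analysis by ranges rather than by membership in $\M(S)$, a point the paper's proof glosses over but which does not change the substance.
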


\begin{proof}\label{i_star}
Note that
\begin{align*}
T^*&=\{x\in\mathbb{Z}: F-x\notin S\cup I\}\\
&=\{x\in \M(S): F-x\notin S\cup I\} \cup \{x\in\mathbb{Z}: x\not\in \M(S), F-x\notin S\cup I\}.
\end{align*}
If $x\in \M(S)$, then $F-x\notin S$. So we know that
\[\{x\in \M(S): F-x\notin S\cup I\}=\{x\in \M(S): F-x\notin  I\}=I^*.\]
Moreover, if $x\notin \M(S)$, then $x\in S$ or $F-x\in S$. This means that if $x\notin \M(S)$ and $F-x\notin S\cup I$, then $x\in S$. Also, if $x\in S$, we automatically have that $x\notin \M(S)$ and $F-x\notin S\cup I$.
Therefore,
\[\{x\in\mathbb{Z}: x\not\in \M(S), F-x\notin S\cup I\}
=S.\]

    Now, it is left to see that $I^*$ is an order ideal of $(\M(S),\preccurlyeq)$. Suppose $x \in I^*$ and $x \preceq y$. Then $F-x\notin I$ and $F-y\preccurlyeq F-x$. Since $I$ is an order ideal, this implies that $F-y \notin I$. So $y \in I^*$, and $I^*$ is an order ideal. 
\end{proof}

The main result of \cite[Section 6]{chen2023enumeratingnumericalsetsassociated} is to determine $\Pa(S)$ for any numerical semigroup $S$ with $\type(S) = 3$.  We build on these results to show that all of these semigroups are $\lambda$-minimal. 
For the remainder of this section, $S$ is a numerical semigroup of type $3$ where $\PF(S) = \set{ P, Q, F}$ with $P<Q<F$.

\begin{lemma}\cite[Lemma 6.2]{chen2023enumeratingnumericalsetsassociated}
    \label{type 3 p(s) 2}
    If $P+Q-F\in S$ and $Q-P \notin \M(S)$, then $\Pa(S) = 2$.
\end{lemma}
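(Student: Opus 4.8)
The plan is to count the order ideals $I$ of $(\M(S),\preccurlyeq)$ for which $T = S\cup I$ is a numerical set associated to $S$, and to show that under the two hypotheses the only such order ideals are $\varnothing$ and $\M(S)$. These always produce associated numerical sets (namely $S$ and $S^* = S\cup\M(S)$), and they are distinct because $\type(S)=3>1$ forces $\M(S)\neq\varnothing$; hence $\Pa(S)\geq 2$, and it remains to prove $\Pa(S)\leq 2$. Throughout I would use that $\PF(S)\cap\M(S)=\{P,Q\}$ are the maximal elements of the void poset and that $F-P,F-Q$ are its minimal elements (Proposition~\ref{prop:max_min_elements}), together with the fact that $\M(S)$ is closed under $x\mapsto F-x$ and that its smallest element is $F-Q$.

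First I would translate the hypotheses into statements about the void poset. Since $P+Q-F\in S$, we have $P-(F-Q)=Q-(F-P)=P+Q-F\in S$, which says $F-Q\preccurlyeq P$ and $F-P\preccurlyeq Q$. Since $Q-P$ is a gap of $S$ (otherwise $P\preccurlyeq Q$ would contradict maximality) and $Q-P\notin\M(S)$, we get $F-(Q-P)=P+(F-Q)\in S$; this additive fact is the key input from the second hypothesis.

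The heart of the argument is to show that $S$ has no Frobenius triangle with first coordinate $P$ or $Q$. For $Q$, any such triangle $(Q,x,y)$ would require $x+y=F-Q$ with $x,y\in\M(S)$; but every element of $\M(S)$ is at least $F-Q$, so $x+y\geq 2(F-Q)>F-Q$, a contradiction. For $P$, a triangle $(P,x,y)$ has $x+y=F-P$ with $x,y\in\M(S)$, so $x\leq (F-P)-(F-Q)=Q-P<F-P$. The minimal element of $\M(S)$ lying below $x$ cannot be $F-P$ (as $F-P>x$), so it is $F-Q$, giving $F-Q\preccurlyeq x$ and hence $x=(F-Q)+s$ for some $s\in S$. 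Then $F-y=P+x=(P+F-Q)+s$, which lies in $S$ by the additive fact above, contradicting $F-y\in\M(S)\subseteq\G(S)$. I expect this step, ruling out Frobenius triangles on $P$ where both hypotheses must be combined, to be the main obstacle.

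Finally I would run the case analysis on $I\cap\{P,Q\}$ for a valid order ideal $I$. The ``Moreover'' clause of Theorem~\ref{thm:classify_num_set} says each pseudo-Frobenius number in $I$ either has its image under $x\mapsto F-x$ in $I$ or lies in a Frobenius triangle satisfied by $I$; since no such triangles exist, $P\in I$ forces $F-P\in I$ and $Q\in I$ forces $F-Q\in I$. Any nonempty $I$ contains a maximal element, so it contains $P$ or $Q$; using the forcing together with $F-Q\preccurlyeq P$ and $F-P\preccurlyeq Q$, membership of either maximal element propagates to put $P,Q,F-P,F-Q$ all in $I$, whence $I=\M(S)$. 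The only remaining possibility is $I=\varnothing$, so the valid order ideals are exactly $\varnothing$ and $\M(S)$ and $\Pa(S)=2$.
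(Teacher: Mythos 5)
Your proposal is correct, but there is nothing in the paper to compare it against: the paper does not prove this lemma, it imports it wholesale as Lemma~6.2 of \cite{chen2023enumeratingnumericalsetsassociated}. What you have written is a self-contained derivation from the machinery this paper does quote, and every step checks out. Both $P$ and $Q$ lie in $\M(S)$ and are its maximal elements, $F-P$ and $F-Q$ are its minimal elements (Proposition~\ref{prop:max_min_elements}), so every element of $\M(S)$ is at least $F-Q$ as an integer; this kills any Frobenius triangle $(Q,x,y)$ outright, and for a triangle $(P,x,y)$ forces $x\leq Q-P<F-P$, hence $F-Q\preccurlyeq x$, whence $F-y=\bigl(F-(Q-P)\bigr)+s\in S$ by the hypothesis $Q-P\notin\M(S)$, contradicting $y\in\M(S)$. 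The ``Moreover'' clause of Theorem~\ref{thm:classify_num_set} then forces $F-P\in I$ whenever $P\in I$ and $F-Q\in I$ whenever $Q\in I$, and the relations $F-Q\preccurlyeq P$ and $F-P\preccurlyeq Q$ (both restatements of $P+Q-F\in S$) propagate membership so that any nonempty valid order ideal contains both minimal elements and therefore, by upward closure, equals $\M(S)$. Combined with the fact that $\varnothing$ and $\M(S)$ always yield numerical sets associated to $S$ and are distinct (since $P\in\M(S)\neq\varnothing$), this gives exactly $\Pa(S)=2$. The only step stated more tersely than it deserves is the final ``whence $I=\M(S)$,'' which rests on the fact that every element of $\M(S)$ sits above one of the two minimal elements; you use this fact explicitly earlier, so the gap is cosmetic. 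Your argument also matches the spirit of how this paper handles the neighboring type-$3$ cases (e.g.\ the proof following Lemma~\ref{type 3 ideals case}), namely classifying the admissible order ideals directly.
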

Proposition \ref{prop:type_less2} implies that numerical semigroups of this kind are $\lambda$-minimal.

\begin{lemma}\cite[Lemma 6.1]{chen2023enumeratingnumericalsetsassociated}
    \label{type 3 4 self dual}
    If $P+Q-F \notin S$, then $\Pa(S)=4$. Moreover, $(\M(S),\preccurlyeq)$ has $4$ order ideals, each of which is self-dual.
\end{lemma}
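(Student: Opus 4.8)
The plan is to use the order-reversing involution $\iota\colon x\mapsto F-x$ on $(\M(S),\preccurlyeq)$ (which maps $\G(S)$-elements to $\G(S)$-elements and reverses $\preccurlyeq$) together with the hypothesis $P+Q-F\notin S$ to split the void poset into two independent, $\iota$-invariant pieces. Write $\M_P=\{x\in\M(S)\colon x\preccurlyeq P\}$ and $\M_Q=\{x\in\M(S)\colon x\preccurlyeq Q\}$. Since $P,Q$ are the maximal elements of the void poset by Proposition~\ref{prop:max_min_elements}, every element lies in $\M_P\cup\M_Q$. First I would show the pieces are disjoint: if $x\preccurlyeq P$ and $x\preccurlyeq Q$, then applying $\iota$ gives $F-P\preccurlyeq F-x$ and $F-Q\preccurlyeq F-x$, and since $F-x$ must itself lie below $P$ or below $Q$, one of $F-Q\preccurlyeq P$ or $F-P\preccurlyeq Q$ follows; either translates to $P+Q-F\in S$, a contradiction. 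The same mechanism shows each piece is $\iota$-invariant and that no order relation connects $\M_P$ to $\M_Q$, so $(\M(S),\preccurlyeq)$ is the relation-free disjoint union of the subposets $\M_P$ and $\M_Q$. Moreover $\M_P=\,\downarrow\! P$ has $P$ as its greatest and $F-P$ as its least element, and likewise for $\M_Q$; in particular $F-Q$ is the smallest element of $\M(S)$.

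The second step is to kill Frobenius triangles so that any associated numerical set is assembled from ``full-or-empty'' pieces. A Frobenius triangle with apex $Q$ would require $x+y=F-Q$ with $x,y\in\M(S)$, which is impossible since $x,y\ge F-Q>0$. Hence by the ``moreover'' clause of Theorem~\ref{thm:classify_num_set}, any numerical set $T=S\cup I$ with $Q\in I$ must have $F-Q\in I$; as $F-Q$ is the minimum of $\M_Q$, this forces $I\cap\M_Q=\M_Q$. Since any nonempty up-set of $\M_Q$ contains $Q$, we get $I\cap\M_Q\in\{\varnothing,\M_Q\}$. A Frobenius triangle with apex $P$ needs $x+y=F-P$ with $x,y\in\M(S)$; because every element of $\M_P$ is $\ge F-P$, both $x$ and $y$ must lie in $\M_Q$. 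For such a triangle to be satisfied by $I$ one needs $x\in I$ but $F-y\notin I$ with $x,F-y\in\M_Q$, which is impossible once $I\cap\M_Q$ is $\varnothing$ or all of $\M_Q$. Therefore $P\in I$ again forces $F-P\in I$, so $I\cap\M_P\in\{\varnothing,\M_P\}$.

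Combining these, every numerical set associated to $S$ is $S\cup I$ with $I\in\{\varnothing,\M_P,\M_Q,\M(S)\}$. Each of these four order ideals is $\iota$-invariant, hence self-dual, so each does give a numerical set associated to $S$ (recall that self-dual order ideals always do); the four are distinct because $\M_P,\M_Q$ are nonempty and disjoint. This gives $\Pa(S)=4$ with all four corresponding order ideals self-dual, and the inequality $|\lambda(S)|\le|\lambda(T)|$ for each then follows from Proposition~\ref{self-dual-implies-lambda-S-small}. The step I expect to be the main obstacle, and the only place where $P+Q-F\notin S$ is used, is the first one: proving that the void poset decouples as the relation-free union $\M_P\sqcup\M_Q$. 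Once that structural decoupling is established, the triangle computations and the counting are routine.
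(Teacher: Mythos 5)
The paper does not prove this lemma; it quotes it from \cite{chen2023enumeratingnumericalsetsassociated}, so there is no in-paper argument to compare yours against. Judged on its own, your proof is correct, and it is a genuine derivation from exactly the tools this paper quotes: Proposition~\ref{prop:max_min_elements}, the ``moreover'' clause of Theorem~\ref{thm:classify_num_set}, and the fact that self-dual order ideals always yield associated numerical sets. I checked the key steps. The hypothesis $P+Q-F\notin S$ does force $\M(S)$ to split as the relation-free, $\iota$-invariant union of $\M_P=\{x\colon x\preccurlyeq P\}$ and $\M_Q=\{x\colon x\preccurlyeq Q\}$; $\iota$-invariance then makes $F-P$ and $F-Q$ the poset-least elements of the two pieces (the one step you assert with only ``the same mechanism,'' but it does follow: $\M_P=\iota(\M_P)=\{y\in\M(S)\colon F-P\preccurlyeq y\}$), which is what makes ``$F-Q\in I$ forces $\M_Q\subseteq I$'' work. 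Since $F-Q$ is then the smallest integer in the void, apex-$Q$ Frobenius triangles cannot exist, and both legs of an apex-$P$ triangle must lie in $\M_Q$, so once $I\cap\M_Q\in\{\varnothing,\M_Q\}$ is known no apex-$P$ triangle can be satisfied either; the order in which you treat the two pieces matters, and you have it right.

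One caveat you should record: the ``moreover'' clause as transcribed in this paper --- that $(\M(S),\preccurlyeq)$ has exactly four order ideals --- is false if read as a statement about all order ideals of the poset, and your proof correctly does not attempt to show that. For example, $S=\langle 6,11,15,19\rangle=\{0,6,11,12,15,17,18,19,21,\rightarrow\}$ has $\F(S)=20$, $\PF(S)=\{13,16,20\}$ and $P+Q-F=9\notin S$, but its void poset is the disjoint union of the chains $4\preccurlyeq 10\preccurlyeq 16$ and $7\preccurlyeq 13$, hence has $4\cdot 3=12$ order ideals; only the four self-dual ones give numerical sets associated to $S$. What is true, what you prove, and what the paper actually uses downstream (via Proposition~\ref{self-dual-implies-lambda-S-small}) is precisely that: exactly four order ideals yield associated numerical sets, and those four are self-dual. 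So your proposal establishes the lemma in the only reading under which it holds, and in passing it exposes an imprecision in the statement as quoted.
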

Proposition \ref{self-dual-implies-lambda-S-small} implies that numerical semigroups of this kind are $\lambda$-minimal.

\begin{lemma}\cite[Lemma 6.4]{chen2023enumeratingnumericalsetsassociated}
    \label{type 3 ideals case}
    Suppose $P+Q-F \in S$ and $Q-P \in \M(S)$. If $S\cup I$ is a numerical set associated to $S$, then $I$ is $\varnothing$, $\M(S)$, or one of the following:   
    \begin{eqnarray*}
    I_1 & = &  \set{x\in \M(S) \colon Q-P \preccurlyeq x}, \\
    I_2 & = & \set{x\in \M(S) \colon F-Q \preccurlyeq x}.
    \end{eqnarray*}
    (Note that it is possible to have $I_1 = I_2$.) 
\end{lemma}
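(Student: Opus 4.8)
The plan is to run the classification of associated numerical sets in Theorem~\ref{thm:classify_num_set}, organized according to which pseudo-Frobenius numbers lie in $I$. Since $\type(S)=3$, Proposition~\ref{prop:max_min_elements} identifies $P,Q$ as the maximal elements of $(\M(S),\preccurlyeq)$ and $F-P,F-Q$ as the minimal elements, where $F=\F(S)$. Consequently $I\cap\PF(S)\subseteq\{P,Q\}$, and every nonempty order ideal contains $P$ or $Q$, so the whole argument splits into the cases determined by $I\cap\{P,Q\}$. To show that an associated $I$ must be one of the four listed sets I would use Proposition~\ref{Element in I}, which applies to \emph{every} element of $I$ and so avoids any case distinction about special gaps; to confirm that $\varnothing$, $\M(S)$, $I_1$, $I_2$ genuinely are associated I would invoke Theorem~\ref{thm:classify_num_set} directly.

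First I would record the structural facts that drive the analysis. From $P+Q-F\in S$ one gets $P-(F-Q)=P+Q-F\in S$, so $F-Q\preccurlyeq P$, and symmetrically $F-P\preccurlyeq Q$. Next, every element of $\M(S)$ is at least $F-Q$, since any element dominates a minimal element $F-P$ or $F-Q$; hence no two elements of $\M(S)$ sum to $F-Q$, and there is \emph{no} ideal triangle with apex $Q$. On the other hand $(P,F-Q,Q-P)$ is an ideal triangle, since all three entries lie in $\M(S)$ and their sum is $F$; as $P\in\PF(S)$ it is in fact a Frobenius triangle. With these facts the skeleton runs as follows. If $I\cap\{P,Q\}=\varnothing$ then $I=\varnothing$. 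If $Q\in I$, then Proposition~\ref{Element in I} applied to $Q$ has no available ideal triangle, so it forces $F-Q\in I$, and then $F-Q\preccurlyeq P$ gives $P\in I$; applying Proposition~\ref{Element in I} to $P$ now yields either $F-P\in I$, which together with $F-Q\in I$ drags in both minimal elements and forces $I=\M(S)$, or a satisfied ideal triangle with apex $P$, which should pin $I$ down to $I_2$. Finally, if $P\in I$ but $Q\notin I$, then $F-P\in I$ is impossible because $F-P\preccurlyeq Q$ would force $Q\in I$; so Proposition~\ref{Element in I} for $P$ must be met by a satisfied triangle, and the triangle $(P,Q-P,F-Q)$, whose condition $F-(F-Q)=Q\notin I$ is automatic, reduces to $Q-P\in I$, which should pin $I$ down to $I_1$.

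The main obstacle is the exactness in the last two cases: deducing $I\supseteq I_2$ (respectively $I\supseteq I_1$) is the easy half, and the work lies in ruling out any strictly larger order ideal. Here I would show that enlarging $I$ either introduces the minimal element $F-P$, collapsing $I$ to all of $\M(S)$, or places $F-(Q-P)$ into $I$ and thereby \emph{unsatisfies} the only relevant Frobenius triangle with apex $P$, so that the enlarged $I$ simply fails the criterion of Theorem~\ref{thm:classify_num_set} and is not associated to $S$. Carrying this out rigorously requires the full comparability data of the type-$3$ void poset, not merely its extreme elements, together with a classification of all ideal triangles with apex $P$; this is the crux of the proof. It is also the place where one must handle the degenerate coincidences separately, namely $Q-P=F-Q$ (which is exactly when $I_1=I_2$) and $Q=2P$. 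Once the exactness is settled, verifying that each of $\varnothing$, $\M(S)$, $I_1$, and $I_2$ is an order ideal meeting the hypotheses of Theorem~\ref{thm:classify_num_set} is routine bookkeeping with the facts assembled above.
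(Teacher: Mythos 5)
Your skeleton is sound as far as it goes: the case split on $I\cap\{P,Q\}$, the non-existence of ideal triangles with apex $Q$, the forced relations $F-Q\preccurlyeq P$ and $F-P\preccurlyeq Q$, and the conclusions $I=\varnothing$, $I=\M(S)$ (once both minimal elements lie in $I$), and the containments $I\supseteq I_2$ (when $Q\in I$) and $I\supseteq I_1$ (when $P\in I$, $Q\notin I$, granting a classification of triangles at $P$) are all correct. But the reverse containments are the actual content of the lemma, and you explicitly defer them (``this is the crux of the proof''), so the proof is incomplete exactly where the work lies. Worse, the mechanism you sketch for the crux is wrong in the case $P\in I$, $Q\notin I$: an order ideal $I$ with $I_1\subsetneq I$ and $Q\notin I$ would \emph{not} ``fail the criterion of Theorem~\ref{thm:classify_num_set},'' because condition (2) there only constrains pseudo-Frobenius numbers lying in $I$, and the Frobenius triangle $(P,Q-P,F-Q)$ remains satisfied as long as $Q-P\in I$ and $Q\notin I$, regardless of what else is added. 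So one cannot rule out larger ideals by ``unsatisfying'' triangles; one must show that no larger \emph{order ideal} avoiding $Q$ exists at all. (Note also that the paper never proves this statement itself---it imports it from the cited reference---and that your last step, verifying that the four sets are associated, is not required by the statement.)

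The missing idea is a dichotomy for the whole void poset: every $x\in\M(S)$ satisfies $x\preccurlyeq Q$ or $Q-P\preccurlyeq x$, and dually every $x$ satisfies $F-Q\preccurlyeq x$ or $x\preccurlyeq F-(Q-P)$. Indeed, if $x\not\preccurlyeq Q$, then $x\preccurlyeq P$ (maximal elements are $P,Q$ by Proposition~\ref{prop:max_min_elements}), so $Q-x$ is a positive gap of $S$; by the standard fact that every gap is dominated by a pseudo-Frobenius number, $P'-(Q-x)\in S$ for some $P'\in\set{P,Q,F}$. Taking $P'=Q$ gives $x\in S$, absurd; taking $P'=F$ gives $F+P-Q=(F-Q+x)+(P-x)\in S$, contradicting $F-(Q-P)\in\G(S)$, which is exactly where the hypothesis $Q-P\in\M(S)$ enters; hence $P'=P$, i.e.\ $Q-P\preccurlyeq x$. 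With this dichotomy the case $P\in I$, $Q\notin I$ closes at once: any $x\in I\setminus I_1$ satisfies $x\preccurlyeq Q$, and upward closure forces $Q\in I$, a contradiction, so $I=I_1$. In the case $Q\in I$, $F-P\notin I$, one first checks that every Frobenius triangle $(P,y,z)$ has $F-Q\preccurlyeq y,z\preccurlyeq Q-P$; since $I\supseteq I_2$, satisfaction forces $z\not\preccurlyeq Q$, so by the dichotomy $z=Q-P$ and therefore $F-(Q-P)\notin I$. Then any $x\in I\setminus I_2$ would satisfy $x\preccurlyeq F-(Q-P)$ by the dual dichotomy, forcing $F-(Q-P)\in I$ by upward closure, a contradiction; so $I=I_2$. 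Without this dichotomy (or an equivalent analysis of the full comparability structure), your outline does not prove the lemma.
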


\begin{lemma}\cite[Section 6]{chen2023enumeratingnumericalsetsassociated}
    \label{type 3 ideals duals}
    For the order ideals $I_1$ and $I_2$ of Lemma \ref{type 3 ideals case}, we have that $I_1^* = I_2$.
\end{lemma}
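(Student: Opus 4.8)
The plan is to prove the statement of Lemma~\ref{type 3 ideals duals}, namely that $I_1^* = I_2$, where $I_1 = \set{x \in \M(S) : Q-P \preccurlyeq x}$ and $I_2 = \set{x \in \M(S) : F-Q \preccurlyeq x}$, under the standing hypotheses that $\type(S) = 3$ with $\PF(S) = \set{P,Q,F}$, $P+Q-F \in S$, and $Q-P \in \M(S)$. By Proposition~\ref{prop:dual_ideal}, we have $I_1^* = \set{x \in \M(S) : F-x \notin I_1}$, so the task is purely to show this set equals $I_2$. First I would unpack the membership condition: $x \in I_1^*$ means $F-x \notin I_1$, i.e.\ it is \emph{not} the case that $Q-P \preccurlyeq F-x$, which by the definition of the void poset means $(F-x)-(Q-P) \notin S$.

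The heart of the argument is therefore to show the equivalence
\[
(F-x)-(Q-P) \notin S \iff F-Q \preccurlyeq x,
\]
for $x \in \M(S)$, since the right-hand side is precisely the condition for $x \in I_2$. Rewriting the left-hand quantity, $(F-x)-(Q-P) = (F-Q+P) - x$, and since $P+Q-F \in S$ we have $F-Q+P \in S$ as well (indeed $F - Q + P = P - (Q - F)$; more directly $P + Q - F \in S$ rearranges usefully). I would set up the chain carefully using the defining relation $x \preccurlyeq y \iff y - x \in S$ together with the symmetry $u \preccurlyeq v \iff F-v \preccurlyeq F-u$ that was already exploited in the proof of Proposition~\ref{Element in I} and Proposition~\ref{prop:dual_ideal}. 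Concretely, $F-Q \preccurlyeq x$ means $x - (F-Q) \in S$, i.e.\ $x - F + Q \in S$; I want to relate this to $(F-Q+P)-x \notin S$. This is where the symmetric structure of a type-3 semigroup and the hypothesis $P+Q-F \in S$ must be combined, likely via the canonical-ideal symmetry $s \in S \iff F - s \notin S$ that holds on the relevant range of gaps, or via the explicit determination of $\M(S)$ implicit in Lemma~\ref{type 3 ideals case}.

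I would carry out the proof by double inclusion. For $I_1^* \subseteq I_2$: take $x \in \M(S)$ with $(F-Q+P)-x \notin S$ and deduce $x-(F-Q) \in S$, hence $x \in I_2$. For $I_2 \subseteq I_1^*$: take $x$ with $x - (F-Q) \in S$ and show $(F-Q+P)-x \notin S$, using that $x, Q-P \in \M(S)$ are both gaps and that $P \in \PF(S)$ controls when sums land in $S$. The defining property of a pseudo-Frobenius number, $P + s \in S$ for all nonzero $s \in S$, and the fact that $P$ itself is a gap, should be the lever that converts "$x - F + Q \in S$" into a statement about $P$ plus that element, forcing the complementary quantity out of $S$.

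The main obstacle I anticipate is managing the interplay of the three quantities $P$, $Q-P$, and $F-Q$ so that the algebra of "in $S$" versus "not in $S$" resolves cleanly; the relation $P + Q - F \in S$ is the crucial nontrivial input and I expect it to be used in exactly the step that bridges the two complementary conditions. A secondary subtlety is verifying that the elements $F-Q+P-x$ and $x-F+Q$ lie in a range where membership in $S$ is decided by the structure already established (so that I may invoke the symmetry $s \in S \iff F-s \notin S$ where applicable), rather than accidentally comparing against elements above $F$. I would handle this by checking the relevant inequalities $0 \le \cdot \le F$ up front, using the depth and the known positions $P < Q < F$ of the pseudo-Frobenius numbers, before running the membership equivalence.
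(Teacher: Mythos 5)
The paper never proves this lemma itself; it is imported wholesale from \cite[Section 6]{chen2023enumeratingnumericalsetsassociated}, so your proposal has to stand on its own. Its skeleton is fine: writing $F=\F(S)$, since $\M(S)$ is invariant under $z\mapsto F-z$, Proposition~\ref{prop:dual_ideal} does give $I_1^*=\{x\in\M(S)\colon Q-P\not\preccurlyeq F-x\}=\{x\in\M(S)\colon x\not\preccurlyeq F-Q+P\}$, and the inclusion $I_2\subseteq I_1^*$ is exactly your easy direction: if $F-Q\preccurlyeq x$ and $x\preccurlyeq F-Q+P$, then adding the two witnessing elements of $S$ puts $P$ in $S$, which is absurd. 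The genuine gap is the reverse inclusion, which is the whole content of the lemma, and there the tools you name cannot carry the argument. The ``canonical-ideal symmetry'' $s\in S\iff F-s\notin S$ is precisely the statement that fails on the void: $\M(S)$ is by definition the set of gaps violating it, it is nonempty exactly because a type $3$ semigroup is not symmetric, and every quantity you must control here ($x$, $F-x$, $Q-P$, $F-Q+P$, and their differences) lives in or is tied to $\M(S)$. The defining property $P+s\in S$ for nonzero $s\in S$ likewise never enters a correct proof; and the hypothesis $P+Q-F\in S$, which you single out as ``the crucial nontrivial input,'' is in fact not used at all in proving $I_1^*=I_2$ (it is needed for Lemma~\ref{type 3 ideals case}, i.e., to know these are the only candidate ideals). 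The range and depth worries are also moot: type $3$ semigroups carry no depth restriction, and negative integers are simply never in $S$.

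What the hard direction actually requires is Proposition~\ref{prop:max_min_elements}, which your plan never invokes. Suppose $x\in I_1^*\setminus I_2$, i.e., $x\in\M(S)$ with $(F-Q+P)-x\notin S$ and $g:=x-(F-Q)\notin S$. Since the minimal elements of $(\M(S),\preccurlyeq)$ are $F-P$ and $F-Q$, and $F-Q\not\preccurlyeq x$, we must have $F-P\preccurlyeq x$; set $s:=x-(F-P)\in S$, so that $g=(Q-P)+s>0$. If $F-g\in S$, then closure of $S$ under addition gives $(F-g)+s=F-Q+P\in S$, contradicting $Q-P\in\M(S)$. Hence $g\in\M(S)$, so by the maximal-element half of Proposition~\ref{prop:max_min_elements}, either $g\preccurlyeq P$ or $g\preccurlyeq Q$. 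But $Q-g=F-x\notin S$ because $x\in\M(S)$, and $P-g=(F-Q+P)-x\notin S$ because $x\in I_1^*$; both options are impossible. This three-step chain --- minimal elements to manufacture $s$, closure plus $Q-P\in\M(S)$ to exclude $F-g\in S$, maximal elements to finish --- is the argument your proposal is missing, and none of the levers you propose (pseudo-Frobenius additivity, symmetry of $S$, range checks) produces it.
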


\begin{prop}
    If $P+Q-F\in S$ and $Q-P \in \M(S)$, then $S$ is $\lambda$-minimal.
\end{prop}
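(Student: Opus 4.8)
The plan is to show that the only numerical sets associated to $S$ in this case are $S\cup I_1$ and $S\cup I_2 = S\cup I_1^*$ (together with $S = S\cup\varnothing$ and $S\cup \M(S)$), and then to prove that none of these has enumeration smaller than $|\lambda(S)|$. The cases $I = \varnothing$ and $I = \M(S)$ give $T = S$ and $T = S^*$; by Proposition~\ref{prop:enum_Tdual} these have equal size $|\lambda(S)|$, so they cause no trouble. By Lemma~\ref{type 3 ideals case} the only remaining candidates are $I_1$ and $I_2$, and by Lemma~\ref{type 3 ideals duals} these are dual to one another. Using Proposition~\ref{prop:enum_Tdual} one more time, $|\lambda(S\cup I_1)| = |\lambda((S\cup I_1)^*)| = |\lambda(S\cup I_1^*)| = |\lambda(S\cup I_2)|$, so it suffices to prove the single inequality $|\lambda(S)| \le |\lambda(S\cup I_1)|$.

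To bound $|\lambda(S\cup I_1)|$ from below I would invoke Lemma~\ref{set_counting}, which tells us that $|\lambda(S\cup I_1)| = |\lambda(S)| + |A| - |B|$, where (writing $I=I_1$) the set $A=\{(i,h)\colon i\in I,\ h\in\G(S\cup I),\ i<h\}$ counts ``new'' boxes coming from gaps that became elements and $B=\{(s,i)\colon s\in S,\ i\in I,\ s<i\}$ counts ``lost'' boxes. The goal then becomes the combinatorial inequality $|B|\le|A|$, which I would establish by constructing an explicit injection $B\hookrightarrow A$. The natural candidate, modeled on the proof of Proposition~\ref{self-dual-implies-lambda-S-small}, is $f(s,i) = (F-i,\, F-s)$: since $I_1$ is self-dual would be too strong here, so instead I would use the explicit description $I_1=\{x\in\M(S)\colon Q-P\preccurlyeq x\}$ to verify that $F-i\in I_1$ whenever $i\in I_1$, i.e. that $I_1$ is self-dual as a set under $x\mapsto F-x$. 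In fact, since $I_2=I_1^*=\{x\colon F-x\notin I_1\}$ and one can check $I_1=I_2$ precisely when $I_1$ is self-dual, the argument splits: either $I_1$ is self-dual and Proposition~\ref{self-dual-implies-lambda-S-small} applies directly, or $I_1\ne I_2$.

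The main obstacle is the non-self-dual sub-case $I_1\ne I_2$, where the clean map $f(s,i)=(F-i,F-s)$ can fail because $F-i$ need not lie in $I_1$. Here I would argue more carefully. One option is to prove $|\lambda(S)|\le\min(|\lambda(S\cup I_1)|,|\lambda(S\cup I_2)|)$ by exploiting the conjugacy $|\lambda(S\cup I_1)|=|\lambda(S\cup I_2)|$ established above together with an averaging or symmetrization of the injection: pairing each $(s,i)\in B$ against a box of $A$ in either $\lambda(S\cup I_1)$ or $\lambda(S\cup I_2)$ and using that their sizes are equal. Concretely, since $F-i\in I_1\cup I_2$ always (because $i\in\M(S)$ forces $F-i\in\M(S)$, and $F-i\notin I_1$ would put $F-i$ into the complement governing $I_2=I_1^*$), the target $(F-i,F-s)$ lands legitimately in the $A$-set of whichever of the two conjugate partitions contains $F-i$; combined with $|A_1|-|B|=|A_2|-|B|$ this yields the bound. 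I expect verifying well-definedness and injectivity of this modified map — tracking exactly when $F-i\in I_1$ versus $F-i\in I_2$ using the order-ideal structure and the relations $Q-P\preccurlyeq x$ and $F-Q\preccurlyeq x$ — to be the delicate computational heart of the argument.
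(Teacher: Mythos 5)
Your reduction follows the paper exactly: discard $\varnothing$ and $\M(S)$, use Lemma \ref{type 3 ideals duals} together with Proposition \ref{prop:enum_Tdual} to get $|\lambda(S\cup I_1)|=|\lambda(S\cup I_2)|$, and apply Lemma \ref{set_counting} to reduce everything to an injection from $B$ into $A$. The gap is in the injection itself, and your proposed repair rests on two false claims. First, ``$F-i\in I_1\cup I_2$ always'' is wrong: by Proposition \ref{prop:dual_ideal} we have $I_2=I_1^*=\{x\in\M(S)\colon F-x\notin I_1\}$, and since $\M(S)$ is closed under $x\mapsto F-x$, the element $F-i$ lies in $I_2$ if and only if $i\notin I_1$. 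So for $(s,i)\in B$ (where $i\in I_1$) the point $F-i$ is \emph{never} in $I_2$; in the problematic case $F-i\notin I_1$ it lies in neither ideal, and your modified map has no valid target in $A_1$ or $A_2$. Second, ``$I_1=I_2$ precisely when $I_1$ is self-dual'' is backwards: $I_1=I_1^*$ says that $x\in I_1$ implies $F-x\notin I_1$, which for nonempty $I_1$ is incompatible with the self-duality hypothesis of Proposition \ref{self-dual-implies-lambda-S-small}. (Equality $I_1=I_2$ makes the \emph{partition} $\lambda(S\cup I_1)$ self-conjugate; it does not make the \emph{order ideal} self-dual.) So both branches of your case split fail, and the case you flagged as delicate is not an edge case --- it is the whole problem.

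The missing idea is to reflect through $Q$ rather than through $F$: the paper's injection is $f(s,i)=(Q-i,Q-s)$. This is exactly where the hypotheses $Q-P\in\M(S)$ and the type-$3$ structure are used. For $i\in I_1=\{x\in\M(S)\colon Q-P\preccurlyeq x\}$ one shows $Q-i\notin S$ (otherwise $Q-P\preccurlyeq i\preccurlyeq Q$, giving $P\in S$); then $i\preccurlyeq P$ by maximality of $P$ and $Q$ in the void poset (Proposition \ref{prop:max_min_elements}), so $P-i\in S$; and then $F-(Q-i)\in S$ would force $F-(Q-P)=\bigl(F-(Q-i)\bigr)+(P-i)\in S$, contradicting $Q-P\in\M(S)$. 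Hence $Q-i\in\M(S)$ with $Q-P\preccurlyeq Q-i$, so $Q-i\in I_1$. Similarly $Q-s\notin S\cup I_1$, since $Q-s\in S$ would give $Q\in S$ and $Q-s\in I_1$ would give $P\in S$. Injectivity is immediate, so $|B|\le|A|$ and $|\lambda(S)|\le|\lambda(S\cup I_1)|$, with duality covering $I_2$. Reflecting through $Q$ makes the argument completely insensitive to whether $I_1=I_2$ or whether any self-duality holds, which is precisely the obstruction your construction could not get around.
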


\begin{proof}
    Since the ideals $\varnothing$ and $\M(S)$ correspond to $\lambda(S)$ and its conjugate, we may disregard these. We claim that $|\lambda(S)| \leq |\lambda(S\cup I_1)|$. By Lemma \ref{set_counting}, it suffices to show that $|B|\leq |A|$, where
    \begin{eqnarray*}    
    A & = & \{(i,g):i\in I_1,\ g\notin S\cup I_1,\ i<g\},\\
    B & = & \{(s,i):s\in S,\ i\in I_1,\ s<i\}.
    \end{eqnarray*}
    Consider the map $f: B\to A$ defined by
    $$(s, i)\mapsto(Q-i, Q-s).$$
    To confirm that this map is well-defined, we must show that $Q-i \in I_1$ and $Q-s \notin S\cup I_1$. 

    In order to show $Q-i\in I_1$, we first show that $Q-i\in \M(S)$.  Note that $Q-(Q-P)=P\notin S$, which means that $Q-P\not\preccurlyeq Q$. Now, since $i \in I_1$ we see that
    $Q-P\preccurlyeq i$. It follows that $i\not\preccurlyeq Q$, that is $Q-i\notin S$.
    Since $P$, $Q$ are the maximal elements of $(\M(S),\preccurlyeq)$ and $i\not\preccurlyeq Q$, we see that $i\preccurlyeq P$. Therefore,
    \[
    (Q-i)-(Q-P)=P-i\in S.
    \]
    Now, if $F-(Q-i)\in S$, then
    \[
    F-(Q-P)= \left(F-(Q-i)\right) + \left((Q-i)-(Q-P)\right) \in S.
    \]
    However, this contradicts the fact that $Q-P\in \M(S)$. Therefore, $F-(Q-i)\notin S$, and so $Q-i\in \M(S)$.
    We have shown that $Q-i\in\M(S)$ and $Q-P \preccurlyeq Q-i$. It follows that $Q-i\in I_1$.

    We now show that $Q-s \notin S\cup I_1$. Note that $Q-s \notin S$ as otherwise $Q=(Q-s)+s\in~S$. If $Q-s \in I_1$, then $Q-P \preccurlyeq Q-s$. This means that $(Q-s) - (Q-P) = P-s\in S$. But $(P-s)
    +s=P\notin S$, so this is a contradiction. Hence, $Q-s \notin S \cup I_1$.
    
    Since $f$ is trivially injective, we conclude that $|B| \leq |A|$. So $|\lambda(S)| \leq |\lambda(S\cup I_1)|$. By Lemma \ref{type 3 ideals duals} and Proposition \ref{prop:enum_Tdual}, $|\lambda(S\cup I_1)|= |\lambda(S\cup I_2)|$. Hence, $S$ is $\lambda$-minimal. 
\end{proof}
\noindent Collecting these results completes the proof of Theorem \ref{thm:type3minimality}.

We conclude by discussing numerical semigroups of type $4$. Theorem \ref{thm:inf_non_lam} gives an infinite family of semigroups that are not $\lambda$-minimal.  Lemma \ref{Frob} implies that each of these semigroups has type $4$. 

Consider the staircase family numerical semigroups 
    \[
    S_{5, k, 4}=\set{0,5,10, ..., 5k, 5(k+1) \rightarrow}.
    \]
    By Theorem \ref{thm:staircase}, these semigroups are $\lambda$-minimal. Lemma \ref{lem:staircase} implies that for any positive integer $k,\ \type(S_{5, k, 4})=4$.
Thus, there are also infinitely many semigroups of type $4$ that are $\lambda$-minimal.

\bibliographystyle{amsplain}
\nocite{*}
\bibliography{bib.bib}

\end{document}